\documentclass[twoside]{article}

\usepackage[preprint]{aistats2026}
\usepackage{hyperref}
\usepackage[round]{natbib}
\usepackage{caption}
\usepackage{subcaption}
\usepackage{float}

\newcommand*{\argmax}{\mathop{\mathrm{argmax}}}
\newcommand*{\argsup}{\mathop{\mathrm{argsup}}}

\usepackage{url}            %
\usepackage{booktabs}       %
\usepackage{amsfonts}       %
\usepackage{nicefrac}       %
\usepackage{microtype}      %
\usepackage{xspace}
\usepackage{amsmath}
\usepackage{amssymb}
\usepackage{mathtools}
\usepackage{amsthm}
\usepackage{thmtools}  
\usepackage{algorithm,algorithmic}
\usepackage{color}
\usepackage{enumitem}
\usepackage{comment}
\usepackage{bm}
\usepackage{graphicx}
\usepackage{wrapfig}
\usepackage[size=small]{todonotes}
\newcommand{\todoks}[2][] {\todo[author=KS:,color=green!40, #1]{#2}}

\usepackage{algorithm}
\usepackage[ruled,linesnumbered,algo2e]{algorithm2e}

\usepackage{natbib}
\bibpunct{(}{)}{;}{a}{,}{,}

\newcommand{\supp}{\operatorname{supp}}
\newcommand{\vol}{\operatorname{vol}}
\newcommand{\diam}{\operatorname{diam}}
\newcommand{\inner}{\operatorname{inner}}
\newcommand{\out}{\operatorname{outer}}

\newcommand{\RR}{\mathbb{R}} 
\newcommand{\E}{\mathbb{E}}
\newcommand{\Xcal}{\mathcal{X}}
\newcommand{\Scal}{{\mathcal{S}}}
\newcommand{\Mcal}{{\mathcal{M}}}
\newcommand{\Pcal}{{\mathcal{P}}}
\newcommand{\Fcal}{{\mathcal{F}}}
\newcommand{\Chat}{{\hat{C}}}
\newcommand{\Ccheck}{{\check{C}}}
\newcommand{\rhat}{{\hat{r}}}
\newcommand{\rcheck}{{\check{r}}}
\newcommand{\rbar}{{\bar{r}}}
\newcommand{\fhat}{{\hat{f}}}
\newcommand{\rtilde}{{\tilde{r}}}
\newcommand{\xhat}{{\hat{x}}}
\newcommand{\Fhat}{{\hat{F}}}
\newcommand{\Fcheck}{{\check{F}}}
\newcommand{\kl}{{\texttt{KL}}}

\DeclareMathOperator*{\ind}{1{\hskip -2.5 pt}\hbox{I}}  
\newcommand{\EE}{\mathbb{E}} 
\DeclareMathOperator*{\Var}{{\rm Var}}
\newcommand{\pr}{\mathop{\mathrm{Pr}}}  
\def\iid{{\it i.i.d.}}

\usepackage[capitalize,noabbrev]{cleveref}

\theoremstyle{plain}
\newtheorem{theorem}{Theorem}[section]

\newtheorem{lemma}[theorem]{Lemma}
\newtheorem{corollary}[theorem]{Corollary}
\theoremstyle{definition}
\newtheorem{definition}[theorem]{Definition}
\newtheorem{assumption}[theorem]{Assumption}
\theoremstyle{remark}
\newtheorem{remark}[theorem]{Remark}

\begin{document}

%

%

\twocolumn[

\aistatstitle{Optimal rates for density and mode estimation with expand-and-sparsify representations}

\aistatsauthor{ Kaushik Sinha$^*$ \And Christopher Tosh$^*$}

\aistatsaddress{ Wichita State University \\ Wichita, Kansas, USA \\
\href{mailto:kaushik.sinha@wichita.edu}{kaushik.sinha@wichita.edu}\And  Memorial Sloan Kettering Cancer Center \\ New York City, New York, USA \\
\href{mailto:christopher.j.tosh@gmail.com}{christopher.j.tosh@gmail.com}} ]

\def\thefootnote{*}\footnotetext{Both  authors contributed equally to this work.}

\begin{abstract}
Expand-and-sparsify representations are a class of theoretical models that capture sparse  representation phenomena observed in the sensory systems of many animals. At a high level, these representations map an input $x \in \RR^d$ to a much higher dimension $m \gg d$ via random linear projections before zeroing out all but the $k \ll m$ largest entries. The result is a $k$-sparse vector in $\{0,1\}^m$. We study the suitability of this representation for two fundamental statistical problems: density estimation and mode estimation. For density estimation, we show that a simple linear function of the expand-and-sparsify representation produces an estimator with  minimax-optimal $\ell_{\infty}$ convergence rates. In mode estimation, we provide simple algorithms on top of our density estimator that recover single or multiple modes at optimal rates up to logarithmic factors under mild conditions.
\end{abstract}

\section{Introduction}
\label{sec:intro}

Neural circuits that transform dense, lower-dimensional signals to sparse, higher-dimensional signals are a recurring motif in animal sensory systems. In the olfactory system of Drosophila, for example, a dense signal consisting of 50 glomeruli in the antennal lobe is lifted, via random projections, to an approximately 2000-dimensional signal where only the top 5\% of coordinates are non-zero~\citep{turner2008, caron2013}. Similar phenomena are observed in the olfactory system of rodents, the visual cortex of cats, and the electrosensory system of electric fish~\citep{stettler2009,olshausen2004,chacron2011}. The widespread nature of these transformations suggests they are useful in representing certain types of data.

Expand-and-sparsify representations are a class of mathematical formalisms to capture and analyze these ubiquitous neural phenomena~\citep{dasgupta2020expressivity}. At a high level, an expand-and-sparsify representation consists of a random projection into a higher-dimensional space followed by a sparsification operation. Recent theoretical work has shown that these expand-and-sparsify representations are well-adapted to a variety of statistical applications, including locality-sensitive hashing~\citep{dasgupta:etal:2017:exp-spars-lsh}, count sketching~\citep{dasgupta:etal:2022:exp-spars-count-sketch}, function approximation~\citep{dasgupta2020expressivity}, and non-parametric classification~\citep{sinha2024_nonparametric}.

In this work, we identify another fundamental statistical task that expand-and-sparsify representations are well-suited for: non-parametric density estimation. Specifically, we show that given $n$ \iid ~samples 
from a smooth density $f$ whose support is restricted to $\Scal^{d-1}$, there is an estimator $\fhat_n$ that is a linear function of the expand-and-sparsify representation that achieves minimax-optimal convergence rates in $\ell_\infty$ distance. Our bounds hold with high probability, simultaneously for all $x$ in the support of $f$. 
Thus, this paper bridges two important areas of study: sparse neural representations of data and density estimation. The expand-and-sparsify representation is common in nature, but its properties are poorly understood. By showing what types of tasks it can solve, we can gain insight on what kinds of circuits might be present in nature. On the other hand, density estimation is a fundamental statistical task that has been studied for decades. Previous theoretical work in this area has been dominated by k-NN and kernel-based methods. Our work shows a completely different approach based on sparse random representations achieves optimal convergence rates.

Beyond density estimation, we present simple algorithms, utilizing the expand-and-sparsify representation density estimator $\fhat_n$, for another fundamental statistical task, namely, non-parametric mode estimation. 
For a unimodal density, this algorithm simply returns the maximizer of $\fhat_n$ over the finite sample that was used to construct $\fhat_n$. We show that with high probability, the $\ell_2$ distance between the mode and the estimated mode decays at an optimal rate (up to logarithmic factors). For a density having multiple unknown modes, we present a simple iterative algorithm that outputs a set of candidate modes by taking the maximizer of $\fhat_n$ over appropriately chosen level sets.
We show that for any mode of $f$ that is well separated from the rest of the modes by low density regions we can always find, with high probability, a mode from the set of candidate modes returned by our algorithm whose $\ell_2$ distance to this well-separated mode decays at an optimal rate (up to logarithmic factors).

We summarize our main contributions below:
\begin{itemize}
    \item We demonstrate the suitability of expand-and-sparsify representation for two fundamental statistical problems: density estimation and mode estimation. 
    \item For density estimation, we show that our proposed density estimator converges at a minimax-optimal rate $O\left(\left(\frac{\log n}{n}\right)^{\frac{\beta}{2\beta+{d-1}}}\right)$ in $\ell_{\infty}$ distance, where $\beta$ is the H\"{o}lder smoothness constant of the distribution to be approximated.
    \item For mode estimation, we show that under mild conditions, our proposed simple algorithms on top of our estimated density can recover modes at an optimal rate $\tilde{O}\left(n^{-\frac{1}{d+3}}\right)$ (up to logarithmic factors).
\end{itemize}

The rest of the paper is organized as follows. We discuss related work in \cref{sec:related_works}. We summarize various notations used in this paper in \cref{sec:notations}. We present our proposed density estimator using expand-and-sparsify presentation in \cref{sec:estimator} and derive density estimation rates in \cref{sec:density_rates}. We present our mode estimation results in \cref{sec:mode_estimation}, empirical evaluations in \cref{sec:empirical}, and conclude in \cref{sec:conclusions}.
\section{Related work}\label{sec:related_works}

\paragraph{Density estimation.} Non-parametric density estimation is a well-studied problem with known optimal rates. The estimator considered in the present paper bears a strong resemblance to the well-known $k$-nearest neighbor (kNN) estimator and the kernel density estimator (KDE), both of which are known to achieve minimax optimal rates under general conditions~\citep{dasgupta2014_knn_density, rinaldo:wasserman:2010:density-clustering, jiang2017_kde_convergence}. Specifically, in $d$-dimensions, both the kNN estimator and KDE achieve convergence rates of $\Tilde{O}\left( n^{-\frac{\beta}{2\beta + d}}\right)$ in $\ell_\infty$ distance, where $\beta$ is the H\"{o}lder smoothness constant of the distribution to be approximated.

One interesting way in which the expand-and-sparsify estimator differs from standard density estimators such as kNN and KDE is its use of randomization. Note that once a dataset and a hyperparameter have been fixed, the KDE and kNN estimators are deterministic. However, the expand-and-sparsify density estimator has an additional source of randomness in its choice of high-dimensional projection. This extra randomness is a property shared with a recently proposed random forest density estimator that builds a set of random $k$-d trees that partition the space and count the number of points that fall into the leaf nodes~\citep{wen2022rf_density}. That work demonstrated that the estimator has expected convergence rate $\Tilde{O}\left( n^{-\frac{1 - 4^{-\beta}}{1 - 4^{-\beta} + d \log 2}}\right)$ in $\ell_2$ distance.

\paragraph{Expand-and-sparsify representations.} As discussed in the introduction, there have been many recent theoretical works exploring the utility of expand-and-sparsify representations for statistical tasks, ranging from classification~\citep{sinha2021fruitfly,ram2022federated,sinha2024_nonparametric} to locality-sensitive hashing~\citep{dasgupta:etal:2017:exp-spars-lsh}.

Of particular relevance to the present paper is the work of~\cite{dasgupta2020expressivity}, which showed that for any smooth function, there is a linear function of the corresponding expand-and-sparsify representation that achieves low $\ell_\infty$ error. While this result demonstrates the existence of a good expand-and-sparsify density estimator in principle, it does not provide an algorithm to construct one from a finite sample. Indeed, as an unsupervised learning problem, it is unclear a priori how to estimate the corresponding coefficients from data. This is a core contribution of the present paper.

\paragraph{Mode estimation.} 
There is an extensive literature on mode estimation, and we review some of the relevant works here. A classical approach to mode estimation consists of estimating the single mode $x_0$ of $f$ as $\hat{x}_0 = \argsup_{x\in\RR^d}f_n(x)$ where $f_n$ is an estimate of $f$ (the kernel density estimator is a typical choice for $f_n$). 
Note that this mode estimator is generally hard to implement in practice. Nevertheless, a series of works on this approach \citep{parzen1962_density_and_mode,chernoff1964_mode,eddy1980_mode_estimator} establish consistency as well as as convergence rates for this procedure under various regularity assumptions on the distribution. 
More recent works \citep{grund1995_mode_estimation,klemela2005_mode_estimation,tsybakov1990_mode,donoho1991_rates_of_convergence} address the problem of optimal choice of kernel bandwidth to adaptively achieve minimax risk for mode estimation estimation. 
In particular, for a $\kappa$-times differentiable $f$, \citet{tsybakov1990_mode,donoho1991_rates_of_convergence}
 independently established minimax risk of the form $n^{-\frac{\kappa-1}{2\kappa+d}}$. 
 Another line of work estimates the mode from practical statistics of the data \citep{grenander1965_direct_mode_estimation,abraham2010_mode_estimate,dasgupta2014_knn_density}. 
 In particular, \cite{abraham2010_mode_estimate,dasgupta2014_knn_density} use a simple and practical mode estimator $\argmax_{x\in X_n}f_n(x)$, where $X_n = \{x_1,\ldots,x_n\}$ are sampled \iid\,
 according to $f$. When $f_n$ is the kernel density estimator, \cite{abraham2010_mode_estimate} shows that this mode estimator is consistent. 
 On the other hand, when $f_n$ is the kNN density estimator, \cite{dasgupta2014_knn_density} shows that this mode estimator is not only consistent but also converges at a minimax-optimal rate. The mode estimation results in the present paper are similar to the results presented in \cite{dasgupta2014_knn_density} when $f$ is restricted to $\Scal^{d-1}$.

 When the distribution is not unimodal, the best known practical approach for estimating all the modes is the mean-shift algorithm and its variations \citep{cheng1995_mean_shift,fukunaga1975_density_gradient,comaniciu2002_mean_shift,li2007_clustering_via_mode_estimation,arias-castro2016_consistency_of_mean_shift}, which essentially consists of gradient ascent of a sufficiently smooth $f_n$ starting from every sample point. 
 Under the milder condition of requiring only that $f$ is well-approximated by a quadratic in a neighborhood of each mode, \cite{dasgupta2014_knn_density} obtained a finite sample minimax-optimal rate on $\|\hat{x}_0-x_0\|$, where $\hat{x}_0$ is estimate of any mode $x_0$, of the form $O\left(n^{-\frac{1}{4+d}}\right)$ that holds with high probability. Again, our multiple mode estimation results in the present paper are similar to the results presented in \cite{dasgupta2014_knn_density} when $f$ is restricted to $\Scal^{d-1}$.

\section{Notations}\label{sec:notations}
Throughout this paper, we assume access to $X_n =\{x_1,\ldots,x_n\}$ drawn \iid ~from a distribution $\Fcal$ over $\Scal^{d-1}$, with Lebesgue density function $f$. For any $A\subset\Scal^{d-1}$, we use $f(A) = \int_{A}f(x)dx$ to denote the probability mass of $A$ under $f$. We use $\|\cdot\|$ to denote $\ell_2$ norm and $\|\cdot\|_{\infty}$ to denote $\ell_{\infty}$ norm. For any $x\in\Scal^{d-1}$ and $r>0$, we use $B(x,r) = \{x' \in\Scal^{d-1}: \|x-x'\|\leq r\}$ to denote the closed ball of radius $r$ and centered at $x$, that intersects with $\Scal^{d-1}$. For any $A\subset\Scal^{d-1}$ and $r>0$, we write $A_r = \{x\in\Scal^{d-1} : \inf_{y\in A} \|y-x\|\leq r\}$. 
We use $\tilde{O}\left(\cdot\right)$ to denote the `big-Oh' notation that hides logarithmic factors. For clear exposition, most of our proofs are deferred to the supplementary material which is organized section-wise. However, when appropriate, we provide proof sketches for important theoretical results.
\section{Expand-and-sparsify density estimation}
\label{sec:estimator}

The expand-and-sparsify representation of an input $x \in \Xcal \subset \RR^d$ is given by the following transformation (Fig. \ref{fig:eas_representation}).
\begin{enumerate}
    \item A high-dimensional linear mapping, $y = \Theta x \in \RR^m$.
    \item A sparsification scheme to a binary vector $z\in \{0,1\}^m$, where \[z_j = \begin{cases} 1 & \text{ if $y_j$ is one of $k$ largest entries in $y$} \\ 0 & \text{ otherwise} \end{cases}\]
\end{enumerate}
Here, $\Theta \in \RR^{m \times d}$ is a random matrix with rows $\theta_1, \ldots, \theta_m$ drawn i.i.d. from some measure $\nu$. We denote the mapping $x \mapsto z$ via the function $h(x) \in \{0,1\}^m$, where each coordinate $j$ is indexed as $h_j(x) = z_j$. 


\begin{figure}
  \begin{center}
    \includegraphics[scale=0.3]{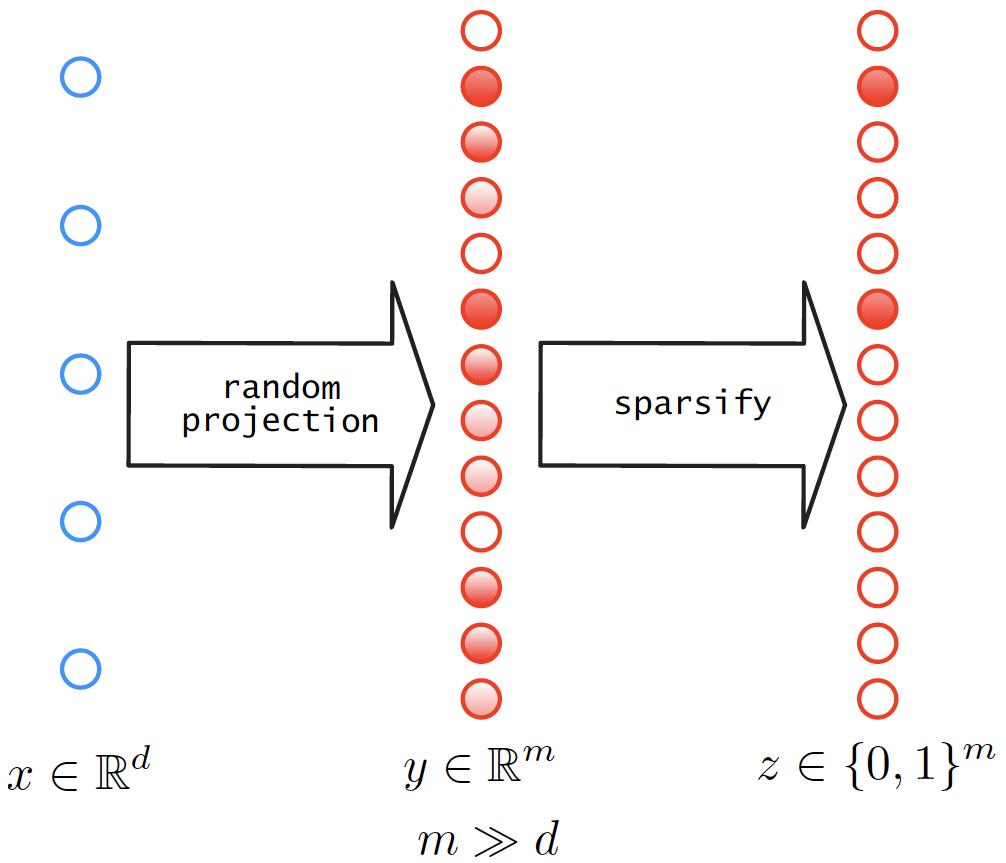}
  \end{center}
  \caption{Visualization of the expand-and-sparsify architecture \citep{dasgupta2020expressivity}.}
  \label{fig:eas_representation}
\end{figure}

Observe that for any vector $x \in \RR^d$, we have $h(c x) = h(x)$ for any $c > 0$. Thus, we will restrict ourselves to the setting where elements of $\Xcal$ have unit length, i.e. $\Xcal \subset \Scal^{d-1} = \{ x \in \RR^d \, : \, \| x \| = 1\}$. We will also focus on the case where $\nu$ is uniform over $\Scal^{d-1}$.

Let $C_j = \{x \in \Xcal \, : \, h_j(x) = 1\}$ denote the inputs satisfying that the $j$-th coordinate in the expand-and-sparsify representation is set to 1. Since $\Xcal$ and $\supp(\nu)$ are both subsets of $\Scal^{d-1}$, $C_j$ consists exactly of those inputs $x$ whose $k$ closest elements of $\theta_1, \ldots, \theta_m$ contains $\theta_j$.

Let $f$ be some density over $\Xcal$, and suppose $x_1,\ldots,x_n$ are drawn \iid ~from $f$. Let $f_{n}(C_j) = \frac{1}{n}\# \{ i : x_i \in C_j \} $ denote the fraction of sample points whose $j$th coordinate is set to 1. When the measure $\nu$ is uniform over $\Scal^{d-1}$, the expand-and-sparsify density estimator is given by
\begin{align}\label{eqn:eas_density_estimator}
\hat{f}_n(x) 
&= \frac{m}{k^2 S_{d-1}} \sum_{j=1}^m f_{n}(C_j) h_j(x)  \nonumber \\
& = \frac{m}{k^2 S_{d-1}} \sum_{j: h_j(x) = 1} f_{n}(C_j),
\end{align}
where $S_{d-1} = \frac{2\pi^{d/2}}{\Gamma(d/2)}$ is the surface area of $\Scal^{d-1}$.

To see where this estimator comes from, consider the following (computationally intractable\footnote{$C_j$ is a subset of a high-dimensional hypersphere that is only implicitly specified, and computing the volume of a high-dimensional set with a membership oracle is a known hard problem \citep{fuerdi1986_volume}.}) estimator:
\[ \tilde{f}_n(x) = \frac{1}{k} \sum_{j=1}^m \frac{f_{n}(C_j)}{\vol(C_j)} h_j(x) = \frac{1}{k} \sum_{j: h_j(x)=1} \frac{f_{n}(C_j)}{\vol(C_j)}, \]
where $\vol(C_j)$ is the $d-1$-dimensional volume of $C_j$, i.e. the corresponding hypersurface area. In this light, $f_{n}(C_j)$ is an estimate of the integral $f(C_j) = \int_{C_j} f(x) \, dx$, and normalizing this value by $\vol(C_j)$ gives us an estimate of $f(u)$ for $u$ drawn uniformly from $C_j$. If $f$ is smooth, $C_j$ is small in diameter, and $x \in C_j$, then $f_{n}(C_j)/\vol(C_j)$ will be close to $f(x)$. Averaging over all $j$ such that $x \in C_j$ gives us the estimator $\tilde{f}$. Moving from $\tilde{f}_n$ to $\hat{f}_n$ is a matter of showing that $\vol(C_j) \approx \frac{k}{m} S_{d-1}$ when $\nu$ is uniform over $\Scal^{d-1}$, which we demonstrate in \cref{eqn:volume-approximation}, below.
\section{Density estimation rates}
\label{sec:density_rates}

In this section, we analyze the convergence rate of the expand-and-sparsify estimator $\hat{f}_n$. We first give a result for general densities with no smoothness assumptions, before specializing this result to a class of generalized H\"{o}lder smooth densities. Finally, we give a concrete matching lower bound for density estimation on the sphere, demonstrating that the expand-and-sparsify estimator is minimax optimal.

\subsection{General density estimation rates}

Our result for estimating general densities will be framed using the following notions of modulus of continuity that captures how much $f$ changes in a neighborhood of any $x$.

\begin{definition}
\label{def:modulus_of_continuity}
For $x\in\Scal^{d-1}$ and $\epsilon>0$, define $\rhat(\epsilon,x)\triangleq \sup\left\{r: \sup_{\|x-x'\|\leq r}f(x')-f(x)\leq \epsilon\right\}$, and $\rcheck(\epsilon,x)\triangleq \sup\left\{r: \sup_{\|x-x'\|\leq r}f(x)-f(x')\leq \epsilon\right\}$.
\end{definition}

Our first observation is that with high probability each region $C_j$ is sandwiched between two balls of nearly equal volume.

\begin{lemma}
\label{lem:inner-outer-balls}
If $\delta > 0$ and $m > k > C_\delta^2 d \log m$, then with probability at least $1-\delta$ the following holds. For every $j=1,\ldots,m$:
\[ B(\theta_j, r_{\inner}) \subset C_j \subset B(\theta_j, r_{\out}), \] 
where $C_\delta = c_0 \log(2/\delta)$ for some absolute constant $c_0$ and
\begin{align*}
\vol( B(\theta_j, r_{\inner}) ) &= \frac{k}{m} S_{d-1} \left(1 - C_\delta \sqrt{\frac{d \log m}{k}} \right) \\
\vol( B(\theta_j, r_{\out}) ) &= \frac{k}{m} S_{d-1} \left(1 + C_\delta \sqrt{\frac{d \log m}{k}} \right).
\end{align*}
\end{lemma}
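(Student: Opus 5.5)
The plan is a uniform-convergence (VC / relative deviation) argument for spherical caps, applied to the empirical measure of the random centers $\theta_1,\ldots,\theta_m$. Let $\nu$ be the uniform probability measure on $\Scal^{d-1}$, so a cap $B(\theta,r)$ has $\nu$-mass $\vol(B(\theta,r))/S_{d-1}$. Let $\nu_m$ be the empirical measure of the $\theta_i$. Choose $r_{\inner}$ and $r_{\out}$ (which do not depend on $j$, by rotation invariance) so that
\[ \nu(B(\cdot,r_{\inner})) = \frac{k}{m}(1-\gamma), \qquad \nu(B(\cdot,r_{\out})) = \frac{k}{m}(1+\gamma), \qquad \gamma = C_\delta\sqrt{\frac{d\log m}{k}}. \]
Here $\gamma<1$ because $k> C_\delta^2 d\log m$.

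The key probabilistic step is to show the following event holds with probability at least $1-\delta$. For every cap $B$ of $\nu$-mass $(k/m)(1-\gamma)$, and hence for any center and in particular for $x\in\Scal^{d-1}$, we have $m\nu_m(B)<k$. For every cap of mass $(k/m)(1+\gamma)$, we have $m\nu_m(B)\ge k+1$. Spherical caps are intersections of the sphere with halfspaces, so they form a class of VC dimension at most $d+1$. I would apply the standard relative (multiplicative) uniform deviation bound for VC classes, e.g.\ the form used in Chaudhuri--Dasgupta for balls:
\[ \nu(B)\ge \frac{k}{m}+\frac{C_\delta}{m}\sqrt{k\,d\log m} \;\Rightarrow\; m\nu_m(B)\ge k, \]
\[ \nu(B)\le \frac{k}{m}-\frac{C_\delta}{m}\sqrt{k\,d\log m} \;\Rightarrow\; m\nu_m(B)< k, \]
simultaneously for all caps. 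Note that $(k/m)\gamma = C_\delta\sqrt{kd\log m}/m$, which is exactly this threshold.

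One subtlety is that $\theta_j$ is itself a sample point, so each cap centered at $\theta_j$ contains it. I would handle this either by conditioning on $\theta_j$ and applying the bound to the other $m-1$ points, then taking a union bound over $j$ (absorbed into the $\log m$ and the constant $c_0$), or by absorbing the $\pm1$ into the constants. I expect the main obstacle to be this bookkeeping: getting the multiplicative Chernoff/VC bound in exactly the form $\sqrt{k d\log m}$ with a constant of the form $c_0\log(2/\delta)$, and handling the $\pm1$ counts.

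On this event, the inclusions are deterministic. First, $x\in C_j$ iff $\theta_j$ is among the $k$ nearest centers to $x$, i.e.\ iff $\|x-\theta_j\|\le\rho_k(x)$, where $\rho_k(x)$ is the distance from $x$ to its $k$-th nearest center. Since $\|x-\theta_j\|=\|\theta_j-x\|$, this says $\theta_j\in B(x,\rho_k(x))$.

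For the inner inclusion, take $x\in B(\theta_j,r_{\inner})$, so $\theta_j\in B(x,r_{\inner})$. The cap $B(x,r_{\inner})$ has mass $(k/m)(1-\gamma)$, so it contains fewer than $k$ centers. Hence $\rho_k(x)>r_{\inner}$, so $\theta_j$ is among the $k$ nearest centers and $x\in C_j$.

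For the outer inclusion, if $\|x-\theta_j\|>r_{\out}$, then $B(x,r_{\out})$ contains at least $k$ centers other than $\theta_j$, all strictly closer to $x$ than $\theta_j$. So $\theta_j$ is not among the $k$ nearest, and $x\notin C_j$.

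Ties occur with probability zero. This gives $B(\theta_j,r_{\inner})\subset C_j\subset B(\theta_j,r_{\out})$ for all $j$ simultaneously, and the stated volumes hold by the choice of the radii.
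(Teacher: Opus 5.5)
Your proposal is correct and follows the paper's proof essentially step for step: fix the radii so the caps have $\nu$-mass $\frac{k}{m}\bigl(1\pm C_\delta\sqrt{d\log m/k}\bigr)$, invoke the Chaudhuri--Dasgupta uniform VC bound for balls, and derive both inclusions deterministically exactly as you do. The one difference is that your concern about $\theta_j$ being a sample point is unnecessary. The bound holds simultaneously for all caps, including those with data-dependent centers, and in the outer inclusion $\theta_j\notin B(x,r_{\out})$ anyway, so $m\nu_m(B)\ge k$ suffices and neither the $k+1$ count nor conditioning or a union bound over $j$ is needed.
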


An immediate consequence of \cref{lem:inner-outer-balls} is that with high probability, each region $C_j$ satisfies $\vol(C_j) \approx S_{d-1} k/m$. Specifically, we have
\begin{align}
\label{eqn:volume-approximation}
1 - C_\delta \sqrt{\frac{d \log m}{k}} \leq \frac{\vol(C_j)}{S_{d-1} k/m } \leq 1 + C_\delta \sqrt{\frac{d \log m}{k}}. 
\end{align}

Our second observation is that the empirical $f_n$-mass of each $C_j$ will tightly approximate its true $f$-mass, essentially a consequence of Bernstein's inequality.

\begin{lemma}
\label{lem:empirical_mass_C_j}
Suppose $\delta > 0$. With probability at least $1-\delta$,
\[ \left| f_n(C_j) - f(C_j) \right| \leq \sqrt{\frac{2{f(C_j)\log(m/\delta)}}{n}}+\frac{2\log(m/\delta)}{3n} \]
for all $j = 1,\ldots,m$.
\end{lemma}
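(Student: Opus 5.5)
The plan is to condition on the random projection matrix $\Theta$ and then apply Bernstein's inequality to each cell separately, finishing with a union bound over the $m$ cells.

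\textbf{Conditioning.} The regions $C_1,\ldots,C_m$ depend only on $\theta_1,\ldots,\theta_m$, which are drawn independently of the sample $X_n$. So once we fix $\Theta$, each $C_j$ is a fixed measurable subset of $\Scal^{d-1}$. Write $p_j = f(C_j)$. Then
\[ n f_n(C_j) = \sum_{i=1}^n \ind[x_i \in C_j] \]
is a sum of $n$ \iid\ Bernoulli$(p_j)$ variables.

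\textbf{Bernstein step.} Center the indicators as $Z_i = \ind[x_i\in C_j] - p_j$. These satisfy $|Z_i|\le 1$ and $\Var(Z_i) = p_j(1-p_j) \le p_j$. Bernstein's inequality then gives, for $t>0$,
\[ \pr\left( \left|\frac1n\sum_i Z_i\right| > t \right) \le 2\exp\left(-\frac{n t^2}{2p_j + 2t/3}\right). \]
Set the right-hand side equal to $\delta/m$. Up to the harmless factor of $2$ inside the logarithm, which we absorb in the standard way, this amounts to solving
\[ n t^2 = 2\log(m/\delta)\, p_j + \tfrac{2}{3}\log(m/\delta)\, t. \]
Its positive root satisfies
\[ t \le \sqrt{\frac{2 p_j \log(m/\delta)}{n}} + \frac{2\log(m/\delta)}{3n}, \]
by the elementary fact that $t^2 \le a + bt$ implies $t \le \sqrt a + b$. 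So the deviation exceeds the bound stated in the lemma with probability at most $\delta/m$.

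\textbf{Union bound and unconditioning.} A union bound over $j=1,\ldots,m$ shows that, conditional on $\Theta$, the bound fails for some $j$ with probability at most $\delta$. This conditional bound holds for every fixed $\Theta$, so integrating over $\Theta$ gives the unconditional statement.

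\textbf{Main obstacle.} The only real subtlety is the independence: Bernstein's inequality applies only because the $C_j$ are fixed once $\Theta$ is fixed. Beyond that, the work is the constant bookkeeping needed to reach exactly $\log(m/\delta)$ rather than $\log(2m/\delta)$. If the paper's constants are meant to be exact, I would use the one-sided Bernstein bound with a slightly adjusted union bound. Otherwise I would note that replacing $\delta$ by $\delta/2$ changes only constants.
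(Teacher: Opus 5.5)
Your proposal is correct and follows the paper's proof: Bernstein's inequality applied to the indicators of $x_i \in C_j$, with variance at most $f(C_j)$, then a union bound over the $m$ cells. Your explicit conditioning on $\Theta$, which makes the $C_j$ fixed sets independent of $X_n$, makes rigorous a step the paper leaves implicit. On the constant, the paper reaches exactly $\log(m/\delta)$ only because its quoted Bernstein lemma is stated two-sided with $\log(1/\delta)$; your suggested one-sided fix would not remove the factor of $2$, since you would still union-bound over $2m$ tail events, so with the standard inequality both arguments actually give $\log(2m/\delta)$, which only changes constants downstream.
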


Given these two results, we have the following general approximation result.

\begin{lemma}\label{lem:relative_density_bound}
Pick $\delta>0, k\geq 4C_{\delta}^2d\log m$ and assume that $\nu$ is uniform over $\Scal^{d-1}$. Let $\gamma_n = \alpha_n/(S_{d-1}(k/m))$, where $\alpha_n = 2\sqrt{\frac{kS_{d-1}\|f\|_{\infty}\log(m/\delta)}{mn}}+\frac{2\log (m/\delta)}{3n}$. If the conclusions of \cref{lem:inner-outer-balls} and \cref{lem:empirical_mass_C_j} hold, the following is true for all $x\in\Scal^{d-1}$ and $\epsilon>0$,
\[(i) \, \fhat_n(x)\leq\left(1+C_{\delta}\sqrt{\frac{d\log m}{k}}\right)(f(x)+\epsilon)+\gamma_n\]
\[(ii) \, \fhat_n(x)\geq\left(1-C_{\delta}\sqrt{\frac{d\log m}{k}}\right)(f(x)-\epsilon)-\gamma_n\]
provided $\diam(C_j)\leq \min\{\rcheck(\epsilon,x), \rhat(\epsilon,x)\}$ for every $j=1,\ldots,m$.
\end{lemma}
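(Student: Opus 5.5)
The plan is to expand $\fhat_n(x)$ as an average over the $k$ active cells and bound each cell's contribution separately. For every $x$ exactly $k$ coordinates of $h(x)$ equal $1$, so
\[ \fhat_n(x) = \frac{1}{k}\sum_{j:h_j(x)=1} \frac{f_n(C_j)}{S_{d-1}k/m}. \]
It therefore suffices to show that for every $j$ with $x\in C_j$,
\[ \Bigl(1-C_\delta\sqrt{\tfrac{d\log m}{k}}\Bigr)(f(x)-\epsilon)-\gamma_n \;\le\; \frac{f_n(C_j)}{S_{d-1}k/m} \;\le\; \Bigl(1+C_\delta\sqrt{\tfrac{d\log m}{k}}\Bigr)(f(x)+\epsilon)+\gamma_n, \]
and then average over the $k$ active indices.

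\textbf{Step 1: localization.} Fix $j$ with $h_j(x)=1$, so $x\in C_j$. Every $x'\in C_j$ then satisfies $\|x-x'\|\le\diam(C_j)\le\min\{\rhat(\epsilon,x),\rcheck(\epsilon,x)\}$. By \cref{def:modulus_of_continuity} this gives $f(x)-\epsilon\le f(x')\le f(x)+\epsilon$ on all of $C_j$. (If the suprema in the definition are not attained, I would handle it by a limiting argument, or by noting that an $\epsilon$-slack suffices.) Integrating over $C_j$ gives $(f(x)-\epsilon)\vol(C_j)\le f(C_j)\le (f(x)+\epsilon)\vol(C_j)$. Combining this with the volume sandwich \eqref{eqn:volume-approximation}, which follows from \cref{lem:inner-outer-balls}, yields
\[ f(C_j)\le (1+C_\delta\sqrt{d\log m/k})(f(x)+\epsilon)\,S_{d-1}k/m. \]
When $f(x)-\epsilon\ge 0$ it also yields the matching lower bound with $(1-C_\delta\sqrt{d\log m/k})$. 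When $f(x)-\epsilon<0$, the lower bound (ii) is trivial, because its right-hand side is negative while $\fhat_n\ge 0$.

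\textbf{Step 2: sampling error.} By \cref{lem:empirical_mass_C_j}, $|f_n(C_j)-f(C_j)|\le\sqrt{2f(C_j)\log(m/\delta)/n}+2\log(m/\delta)/(3n)$. To make this uniform in $j$, I bound $f(C_j)\le\|f\|_\infty\vol(C_j)$. The assumption $k\ge 4C_\delta^2 d\log m$ gives $C_\delta\sqrt{d\log m/k}\le 1/2$, so $\vol(C_j)\le \tfrac32 S_{d-1}k/m\le 2S_{d-1}k/m$. Hence
\[ \sqrt{2f(C_j)\log(m/\delta)/n}\le 2\sqrt{kS_{d-1}\|f\|_\infty\log(m/\delta)/(mn)}, \]
so $|f_n(C_j)-f(C_j)|\le\alpha_n$. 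Dividing by $S_{d-1}k/m$ turns $\alpha_n$ into $\gamma_n$, and combining with Step 1 gives the per-cell bounds. Averaging over the $k$ active cells then proves (i) and (ii).

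I expect no serious obstacle. The points needing care are the bookkeeping that the constant $4$ in the hypothesis on $k$ is exactly what makes the variance term match $\alpha_n$, and the sign case $f(x)<\epsilon$ in the lower bound, where multiplying the inequality $f\ge f(x)-\epsilon$ by a volume lower bound would otherwise go in the wrong direction.
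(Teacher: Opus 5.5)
Your proposal is correct and follows the paper's proof essentially step for step. You localize each active cell via the diameter condition, bound $f(C_j)\le\|f\|_\infty\vol(C_j)\le 2kS_{d-1}\|f\|_\infty/m$ so that \cref{lem:empirical_mass_C_j} yields the uniform deviation $\alpha_n$, convert volumes with \eqref{eqn:volume-approximation}, and average over the $k$ active indices.

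The only difference is cosmetic, in (ii): the paper writes $\frac{f(C_j)}{S_{d-1}k/m}\ge\frac{f(C_j)}{\vol(C_j)}\bigl(1-C_\delta\sqrt{d\log m/k}\bigr)$ using only $f(C_j)\ge 0$, and then multiplies $f(C_j)/\vol(C_j)\ge f(x)-\epsilon$ by that positive factor. This ordering needs no case split on the sign of $f(x)-\epsilon$. The non-attainment issue you flag is harmless, because $\{x' : \|x'-x\|=\rhat(\epsilon,x)\}$ is a null set and so does not affect the integral bound on $f(C_j)$.
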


\subsection{Smooth density estimation rates}

Moving to the smooth setting, we will prove convergence rates for the expand-and-sparsify density estimator for densities satisfying the following generalization of H\"{o}lder smoothness.

\begin{definition}
\label{def:holder-smoothness}
We say that $f:\Xcal \rightarrow \RR$ is $L,\beta, r$-smooth if for all $x, x' \in \Xcal$ satisfying $\|x- x'\| \leq r$, we have $|f(x) - f(x')| \leq L \| x- x'\|^\beta$. We say $f$ is $L, \beta$-smooth if it is $L,\beta, r$-smooth for any $r>0$.
\end{definition}

Our analysis here is based on applying \cref{lem:relative_density_bound}, which shows that so long as the diameters of the regions $C_j$ are sufficiently small, then $\hat{f}_n$ provides an accurate estimate of $f$. The following result, which follows readily from \cref{lem:inner-outer-balls}, provides bounds on these diameters.

\begin{lemma}
\label{lem:general-diameter-bound}
Let $k \geq C_\delta^2 d \log m$. If the conclusion of \cref{lem:inner-outer-balls} holds, then
\[  \diam(C_j) \leq \frac{4}{\sqrt{3}} \left( \frac{6 \sqrt{d} k}{m}\right)^{1/(d-1)} \]
for all $j=1,\ldots,m$.
\end{lemma}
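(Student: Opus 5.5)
Since $C_j \subset B(\theta_j, r_{\out})$ by \cref{lem:inner-outer-balls}, we have $\diam(C_j) \leq 2 r_{\out}$. It therefore suffices to show $r_{\out} \leq \frac{2}{\sqrt{3}}\left(\frac{6\sqrt{d}k}{m}\right)^{1/(d-1)}$. We get this by lower bounding the surface area of a spherical cap of Euclidean radius $r$ and comparing it with the value of $\vol(B(\theta_j,r_{\out}))$ given in \cref{lem:inner-outer-balls}.

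First, since $k \geq C_\delta^2 d\log m$, we have $C_\delta\sqrt{d\log m/k}\leq 1$. Hence \cref{lem:inner-outer-balls} gives $\vol(B(\theta_j,r_{\out})) \leq 2\frac{k}{m}S_{d-1}$. If $r_{\out}$ were large (say $r_{\out} \geq \sqrt{2}$, the hemisphere), the cap would have volume at least $S_{d-1}/2$. This is impossible whenever $k/m$ is small. If $k/m$ is not small, the claimed bound exceeds $2$ and holds trivially, since every diameter on the sphere is at most $2$. So we may assume $r = r_{\out}$ is at most a constant, for instance $r \le 1$.

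Next, we lower bound the cap area. A cap of chordal radius $r$ around $\theta_j$ contains the set of points of the sphere lying over a $(d-1)$-dimensional disc in the tangent hyperplane. Parametrize the cap by height $t = \langle x,\theta_j\rangle \in [1-r^2/2, 1]$. Its area is $S_{d-2}\int_{1-r^2/2}^1 (1-t^2)^{(d-3)/2}\,dt$, which is at least the volume of its projection onto the tangent hyperplane. That projection is a $(d-1)$-ball of radius $\rho = r\sqrt{1-r^2/4}$. For $r\le 1$ we have $\rho \geq \frac{\sqrt{3}}{2}r$, and this is where the factor $\frac{\sqrt3}{2}$, hence $\frac{4}{\sqrt3}=2\cdot\frac{2}{\sqrt3}$, arises. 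So
\[\vol(B(\theta_j,r)) \geq V_{d-1}\left(\tfrac{\sqrt3}{2}r\right)^{d-1},\]
where $V_{d-1} = \pi^{(d-1)/2}/\Gamma(\frac{d+1}{2})$ is the volume of the unit $(d-1)$-ball.

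Combining the two bounds gives $\left(\frac{\sqrt3}{2}r\right)^{d-1} \leq \frac{2k S_{d-1}}{m V_{d-1}}$. The remaining step, which I expect to be the main technical nuisance, is the Gamma-function estimate
\[\frac{S_{d-1}}{V_{d-1}} = \frac{2\sqrt{\pi}\,\Gamma(\frac{d+1}{2})}{\Gamma(\frac d2)} \leq 3\sqrt d.\]
This follows from Gautschi's inequality $\Gamma(\frac{d+1}{2})/\Gamma(\frac d2)\leq\sqrt{d/2}$, which gives $\frac{S_{d-1}}{V_{d-1}} \le \sqrt{2\pi d}\le 3\sqrt d$. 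Hence $\left(\frac{\sqrt3}{2}r\right)^{d-1}\leq \frac{6\sqrt d k}{m}$. Taking $(d-1)$-th roots and doubling yields the stated diameter bound. The assumption $r\le 1$ should be double-checked against the trivial case: if $r>1$, the cap contains a hemisphere-scale region, so the right-hand side exceeds $2\geq \diam(C_j)$ anyway.
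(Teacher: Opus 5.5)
Your proof is correct and follows the paper's route: $\diam(C_j)\le 2r_{\out}$, then $\nu(B(\theta_j,r_{\out}))\le 2k/m$ because $C_\delta\sqrt{d\log m/k}\le 1$, then invert a lower bound on the mass of a cap. The only difference is that you derive that cap bound yourself (tangent-plane projection plus $S_{d-1}/V_{d-1}\le 3\sqrt d$), whereas the paper cites it as \cref{lem:mass-to-radius}, which is exactly your inequality stated for $r\in(0,1)$.

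Your treatment of large $r_{\out}$ also covers a case the paper's proof silently skips. The ``hemisphere-scale'' wording is loose; to make it precise, apply your cap bound at radius $1$ and use $B(\theta_j,1)\subseteq B(\theta_j,r_{\out})$ to get $(\sqrt3/2)^{d-1}\le 6\sqrt d\,k/m$, so the right-hand side is at least $2\ge\diam(C_j)$.
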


Combining \cref{lem:relative_density_bound,lem:general-diameter-bound} gives us the following result.

\begin{theorem}
\label{thm:smooth-density-bound}
Say $f$ is $L,\beta, r$-smooth and $\nu$ is uniform over $\Scal^{d-1}$. Pick $\delta > 0$, $k \geq \max\{4 C_\delta^2 d \log m, L^{-2} r^{-2\beta}\}$, and $m \geq 6\sqrt{d} \left( \frac{4}{\sqrt{3}} \right)^{d-1} L^{(d-1)/\beta} k^{1 + \frac{d-1}{2\beta}}$. Then with probability at least $1-2\delta$, 
\[ \|f - \hat{f}_n \|_\infty \leq \left( \|f\|_\infty C_\delta \sqrt{d \log m} + 2 \right) k^{-1/2} + \gamma_n ,\]
where $\gamma_n$ was defined in \cref{lem:relative_density_bound}.
\end{theorem}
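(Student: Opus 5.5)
We intersect the good events of \cref{lem:inner-outer-balls} and \cref{lem:empirical_mass_C_j}, each failing with probability at most $\delta$. By a union bound, both conclusions hold with probability at least $1-2\delta$. On this event we apply \cref{lem:relative_density_bound} with the single choice $\epsilon = k^{-1/2}$, uniformly in $x$. This needs $k \geq 4C_\delta^2 d\log m$, which is assumed.

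The first step is to verify the diameter hypothesis of \cref{lem:relative_density_bound}. \cref{lem:general-diameter-bound} applies because $k \ge C_\delta^2 d\log m$, and it gives $\diam(C_j) \le \frac{4}{\sqrt3}(6\sqrt d k/m)^{1/(d-1)}$. Plugging in the assumed lower bound on $m$ gives
\[
\left(\frac{6\sqrt d k}{m}\right)^{1/(d-1)} \le \frac{\sqrt3}{4}\, L^{-1/\beta} k^{-1/(2\beta)},
\]
so $\diam(C_j)\le (L\sqrt k)^{-1/\beta} =: \rho$.

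Next we convert smoothness into moduli of continuity. Since $k \ge L^{-2}r^{-2\beta}$, we have $\rho \le r$. So for any $x'$ with $\|x-x'\|\le\rho$, $(L,\beta,r)$-smoothness gives $|f(x)-f(x')|\le L\rho^\beta = k^{-1/2}=\epsilon$. Hence $\rhat(\epsilon,x),\rcheck(\epsilon,x)\ge\rho\ge\diam(C_j)$ for all $x$ and $j$, and \cref{lem:relative_density_bound} applies at every $x\in\Scal^{d-1}$.

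Finally we expand the two-sided bounds. Write $\eta = C_\delta\sqrt{d\log m/k}$. Part (i) gives
\[
\fhat_n(x)-f(x)\le \eta f(x) + (1+\eta)\epsilon + \gamma_n .
\]
Since $k\ge 4C_\delta^2 d\log m$, we have $\eta\le 1/2$, so $(1+\eta)\epsilon \le 2k^{-1/2}$. Also $\eta f(x)\le \|f\|_\infty C_\delta\sqrt{d\log m}\,k^{-1/2}$. Part (ii) gives
\[
f(x)-\fhat_n(x)\le \eta f(x) + (1-\eta)\epsilon + \gamma_n ,
\]
which obeys the same bound. Taking the supremum over $x$ yields the claim.

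The only delicate point is the exponent bookkeeping in the diameter step. It requires checking that the power $1/(d-1)$ applied to $L^{(d-1)/\beta}k^{1+(d-1)/(2\beta)}$ exactly cancels the $k$ factor and leaves $k^{-1/(2\beta)}$. It also requires the constant $(4/\sqrt3)^{d-1}$ in the assumption on $m$ to cancel the prefactor $4/\sqrt3$. Everything else is direct substitution.
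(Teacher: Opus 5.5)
Your proposal is correct and follows the paper's proof essentially step for step: a union bound over the two lemmas, the choice $\epsilon = k^{-1/2}$, the lower bound on $m$ to get $\diam(C_j) \le (L\sqrt{k})^{-1/\beta}$, the condition $k \ge L^{-2} r^{-2\beta}$ to turn smoothness into bounds on the moduli of continuity, and then expanding both sides of the relative density bound. The only difference is cosmetic: you absorb $(1+\eta)\epsilon$ into $2k^{-1/2}$ using $\eta \le 1/2$, whereas the paper uses $\eta \le 1$.
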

To give an intuition on how the above pieces fit together, we provide here a short sketch of the proof of \cref{thm:smooth-density-bound}.
\begin{proof}[Proof sketch]
By \cref{def:modulus_of_continuity,def:holder-smoothness}, we have $\rhat(\epsilon,x), \rcheck(\epsilon, x) \geq \left( \frac{\epsilon}{L}\right)^{1/\beta}$ for all $x \in \Scal^{d-1}$ and $\epsilon > 0$. Applying \cref{lem:general-diameter-bound} with large enough $m$, we can then guarantee that
\[ \diam(C_j) \leq \left( \frac{\epsilon}{L}\right)^{1/\beta} \leq \min\{\rhat(\epsilon,x), \rcheck(\epsilon, x)\}. \]
With this bound in hand, we can then apply \cref{lem:relative_density_bound} to finish the proof.
\end{proof}

Instantiating \cref{thm:smooth-density-bound} with $m = \Theta(n)$ and $k = \Theta\left( m^{\frac{2\beta}{2\beta+(d-1)}}\left(\log m\right)^{\frac{d-1}{2\beta+(d-1)}}\right)$ leads to 
\[ \sup_{x\in\Scal^{d-1}}|f(x)-\fhat_n(x)|  = O\left(\left(\frac{\log n}{n}\right)^{\frac{\beta}{2\beta+(d-1)}}\right).\] 
As we will show in the next section, this rate is in fact minimax optimal.

\subsection{Lower bounds on density estimation rates}
Next, we derive a matching density estimation lower bound for any $L, \beta$-smooth density, whose support is restricted to $\Scal^{d-1}$. Our main result is as follows.
\begin{theorem}\label{thm:density_lower_bound}
Let $\mathcal{P}$ be the class of probability  distributions whose densities are supported on $\mathcal{S}^{d-1}$ and are $L,\beta$ smooth. For any $P\in\mathcal{P}$, we denote its density by $f_P$. Let $x_1,\ldots, x_n$ be a sample from some distribution $P\in\mathcal{P}$ and let $\hat{f}_n = \hat{f}_n(x_1,\ldots,x_n)$ be an estimator of $f_P$. Then, for $d>4$ and $n\geq 16$, the following holds.
\[\inf_{\hat{f}_n}\sup_{P\in\mathcal{P}}\mathbb{E}_P\left(\|\hat{f}_n-f_P\|_{\infty}\right) = \Omega\left(\left(\frac{\log n}{n}\right)^{\frac{\beta}{2\beta+(d-1)}}\right).\]
\end{theorem}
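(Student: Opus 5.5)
We will use the standard reduction from estimation to multiple hypothesis testing, in the form of Fano's inequality (Tsybakov's Theorem 2.5, or the sup-norm version of Ibragimov--Has'minskii). The effective dimension is $d-1$, because the sphere is a $(d-1)$-dimensional manifold.

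\textbf{Construction.} Start from a base density $f_0 \equiv 1/S_{d-1}$, which is uniform on $\Scal^{d-1}$ and lies in the smooth class. Fix a bandwidth $h>0$, to be chosen later, and a maximal $2h$-separated set of points $z_1,\ldots,z_M$ on $\Scal^{d-1}$. A volume argument gives $M \asymp h^{-(d-1)}$. Let $\phi$ be a smooth bump on $\RR$ supported in $[0,1]$, and set
\[ g_j(x) = L h^{\beta}\,\psi\bigl(\|x-z_j\|/h\bigr), \]
with $\psi$ chosen so that $g_j$ is $L,\beta$-smooth (after rescaling $L$ by a constant).

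To keep total mass one, we need the perturbations to integrate to zero. We take $\psi$ with zero radial integral against the surface measure, or equivalently we use $g_j - \tilde g_j$ with $\tilde g_j$ a second bump placed elsewhere inside the cell. The hypotheses are $f_j = f_0 + g_j$ for $j=1,\ldots,M$.

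\textbf{Required properties.} For $h$ small, each $f_j$ has the following properties:
\begin{itemize}
\item it is nonnegative, since $Lh^\beta \ll 1/S_{d-1}$;
\item it integrates to one;
\item it is $L,\beta$-smooth, because the supports are disjoint and each bump has the right H\"older norm.
\end{itemize}
The separation is $\|f_j - f_l\|_\infty \ge c L h^\beta$, since the supports are disjoint.

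For the KL divergence we use the bound by $\chi^2$ together with the lower bound on $f_0$:
\[ \kl(P_j^n\|P_0^n) \le n\int g_j^2/f_0 \lesssim n L^2 h^{2\beta} h^{d-1}. \]

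\textbf{Choice of $h$ and conclusion.} Fano requires the KL bound to be at most $\alpha\log M \asymp \alpha (d-1)\log(1/h)$. Choosing $h \asymp (\log n/n)^{1/(2\beta+d-1)}$ with a small constant achieves this. Here $\log(1/h)\asymp \log n/(2\beta+d-1)$, which is where the $\log n$ factor comes from.

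Fano then gives that the probability of misidentifying $j$ is bounded below by a constant. The conditions $n\ge 16$ and $d>4$ presumably make $M\ge 2$ and make the constants, such as $\log M$ versus $\log 2$, work out. Combining this with Markov's inequality gives
\[ \inf_{\hat f_n}\sup_P \E\|\hat f_n-f_P\|_\infty \gtrsim h^\beta = (\log n/n)^{\beta/(2\beta+d-1)}. \]

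\textbf{Main obstacle.} We expect the technical obstacle to be making the construction rigorous on the sphere. Three points need care:
\begin{itemize}
\item the lower bound $M\gtrsim h^{-(d-1)}$ on the packing number, which follows from bounds on spherical-cap volumes;
\item showing that the zero-mean radial bumps remain $L,\beta$-smooth in the chordal metric, including when $\beta>1$ if that case is allowed;
\item tracking constants so that the Fano condition holds for all $n\ge16$.
\end{itemize}
For the cap-volume comparison we would rely on the same estimates used in \cref{lem:general-diameter-bound}.
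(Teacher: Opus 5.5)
Your proposal is correct and follows the same route as the paper. Both perturb the uniform density by disjointly supported, zero-mean bumps of height $\asymp h^{\beta}$ placed on a packing of $\Scal^{d-1}$ at scale $h$, bound the KL divergence by $\chi^2$ to get $\lesssim n h^{2\beta+(d-1)}$, set $h\asymp(\log n/n)^{1/(2\beta+(d-1))}$, and apply Fano. Your version is tidier in two places. Taking a maximal separated set gives the \emph{lower} bound on $M$ that Fano actually needs, via the cap-volume upper bound of \cref{lem:spherical_cap_volume_upper_bound}; the estimate behind \cref{lem:general-diameter-bound} goes the other way, so that cross-reference is not the one you want. Also, your radial bumps $\psi(\|x-z_j\|/h)$ are intrinsically defined on the sphere, unlike the paper's affine rescaling $k((x-x_j)/h+a)$.
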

We present a short sketch of the proof here. It is structurally similar to other lower bound proofs for smooth density estimation in $\RR^d$, e.g. \citep{khasminskii1979lowerbound, tsybakov:2008:nonparametric}, but adapted here to the setting of the sphere.
\begin{proof}[Proof sketch]
The high level idea is to find a packing of the unit sphere and construct a finite set of smooth functions, using this packing, with non-overlapping support. We then construct a finite family of $L,\beta$ smooth densities using the functions from above. We can show that these densities are close to each other in KL divergence but far from each other in $\ell_\infty$ distance. An application of Fano's inequality~\citep{tsybakov:2008:nonparametric} finishes the proof. 
\end{proof}


\section{Mode estimation}\label{sec:mode_estimation} 
We now present analyses of two simple algorithms that use the expand-and-sparsify representation for estimating modes of an unknown density. We start with the following definition of modes. 
\begin{definition}
    We denote the set of modes of a density $f$ by: \[\Mcal = \{x: \exists r>0, \forall x'\in B(x,r),f(x')<f(x)\}.\]
\end{definition}
We follow the general approach of \citep{dasgupta2014_knn_density} for our mode estimation results and make the following assumption.
\begin{assumption}\label{assumption:differentiability}
$f$ is twice differentiable in a neighborhood of every $x\in\Mcal$. We denote the gradient and Hessian of $f$ by $\nabla f$ and $\nabla^2f$. Furthermore, $\nabla^2f(x)$ is negative definite at all $x\in\Mcal$.
\end{assumption}
Assumption \ref{assumption:differentiability}, like most previous work, considers only interior modes and excludes modes at the boundary of the support of $f$, where it is not continuously differentiable. 
A direct implication of Assumption \ref{assumption:differentiability} is that for any $x\in\Mcal, \nabla f$ is continuous in a neighborhood of $x$, with $\nabla f(x)=0$. Additionally, negative definiteness of $\nabla^2 f(x)$ at $x$ allows $f$ to be well approximated by a quadratic in a neighborhood of $x$ as detailed below.
\begin{lemma}
    [\cite{dasgupta2014_knn_density}]
    Let $f$ satisfy Assumption \ref{assumption:differentiability}. Consider any $x\in\Mcal$. Then there exists a neighborhood $B(x,r), r>0$, and constants $\Chat_x, \Ccheck_x>0$ such that, for all $x'\in B(x,r)$, we have
    \begin{equation}\label{eq:mode_quadratic_bound}
        \Ccheck_x\|x'-x\|^2\leq f(x)-f(x')\leq \Chat_x\|x'-x\|^2.
    \end{equation}
\end{lemma}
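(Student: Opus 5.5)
We prove the quadratic bound \eqref{eq:mode_quadratic_bound} by a second-order Taylor expansion of $f$ around the mode $x$, using the vanishing gradient and the negative definite Hessian from Assumption~\ref{assumption:differentiability}. Since $x\in\Mcal$ is a strict local maximizer and $f$ is twice differentiable near $x$, we have $\nabla f(x)=0$. Write $H=\nabla^2 f(x)$. Because $H$ is negative definite, its eigenvalues satisfy $-\lambda_{\max}\le \text{eig}(H)\le -\lambda_{\min}<0$ for some constants $0<\lambda_{\min}\le\lambda_{\max}$. Hence, for every vector $v$,
\[
\lambda_{\min}\|v\|^2\le -v^\top H v\le \lambda_{\max}\|v\|^2 .
\]
If $f$ is viewed on the sphere, we work with $f$ extended to a neighborhood in $\RR^d$, or in local coordinates of $\Scal^{d-1}$. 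In either case the chord distance $\|x'-x\|$ is comparable to the coordinate distance within a small ball, so the argument is unchanged up to constants.

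Next we expand $f$ around $x$. For $x'$ near $x$, Taylor's theorem with Peano remainder (twice differentiability at $x$ suffices) gives
\[
f(x')=f(x)+\tfrac12 (x'-x)^\top H (x'-x)+R(x'),\qquad \frac{|R(x')|}{\|x'-x\|^2}\to 0 \text{ as } x'\to x .
\]
Here the first-order term is absent because $\nabla f(x)=0$. If differentiability only holds in a neighborhood, we would instead use the integral or mean-value form with $\nabla^2 f$ evaluated on the segment; that would require continuity of the Hessian, which the Peano version avoids.

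Now we control the remainder. Choose $r>0$ small enough that $|R(x')|\le \tfrac{\lambda_{\min}}{4}\|x'-x\|^2$ on $B(x,r)$. Then
\[
f(x)-f(x')=-\tfrac12(x'-x)^\top H(x'-x)-R(x'),
\]
which lies between $(\tfrac{\lambda_{\min}}{2}-\tfrac{\lambda_{\min}}{4})\|x'-x\|^2$ and $(\tfrac{\lambda_{\max}}{2}+\tfrac{\lambda_{\min}}{4})\|x'-x\|^2$. So we may take $\Ccheck_x=\lambda_{\min}/4$ and $\Chat_x=\lambda_{\max}/2+\lambda_{\min}/4$.

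The main obstacle is making sure the Hessian and the norm are compatible with the sphere geometry. In ambient coordinates, the tangential restriction of $H$ is what should be negative definite, since normal directions are not seen. The plan is to interpret Assumption~\ref{assumption:differentiability} in a chart $\phi$ around $x$ (for example the exponential map). In that chart we apply the argument above to $f\circ\phi$. We then use $c\|u-u'\|\le\|\phi(u)-\phi(u')\|\le C\|u-u'\|$ on a small ball, which changes the constants only by factors of $c^2$ and $C^2$.
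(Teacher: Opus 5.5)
Your proposal is correct and is essentially the standard argument behind this lemma, which the paper does not reprove but quotes from Dasgupta and Kpotufe: a second-order Taylor expansion at the mode with vanishing gradient, two-sided eigenvalue bounds for the negative definite Hessian, and a Peano remainder absorbed by shrinking $r$. One caution: commit to the chart-based reading of your last paragraph. For an arbitrary ambient extension of $f$, only the tangential part of the gradient vanishes at $x$, and the extension's normal derivative at $x$ then enters the quadratic coefficient through $x^\top(x'-x)=-\|x'-x\|^2/2$; so the ambient route is not literally ``unchanged up to constants,'' and the relevant matrix is the Riemannian Hessian, not the bare tangential restriction of $H$.
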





The above lemma allows us to parameterize any mode $x\in\Mcal$ locally as defined below, which in turn ensures that $A_x\cap\Mcal =\{x\}$.
\begin{definition}\label{def:local_parameterization}
    For every mode $x\in\Mcal$, there exists a $r_x>0$, such that $B(x,r_x)$ is connected in a set $A_x$, satisfying (i) $A_x$ is a connected component\footnote{For any $S\subset\Scal^{d-1}$, we say that $x$ and $y$ are connected is $S$ if $x=\Pcal(0)$ and $y=\Pcal(1)$ where $\Pcal$ is any continuous (path) function $\Pcal: [0,1] \rightarrow S$. We say $S$ is connected if it has a single connected component.} of a level set $\Scal^{d-1}_{\lambda_x} \triangleq \{x'\in\Xcal: f(x')>\lambda_x\}$ for some $\lambda_x>0$, and, (ii) $\exists \Chat_x,\Ccheck_x >0, \forall x'\in A_x, \Ccheck_x\|x'-x\|^2\leq f(x)-f(x')\leq \Chat_x\|x'-x\|^2$. 
\end{definition}
Finally, we assume that every hill in $f$ corresponds to a mode in $\Mcal$. 
\begin{assumption}\label{assumption:level_set_connected_component_mode_containing}
    Each connected component of any level set $\Scal^{d-1}_{\lambda}=\{x\in\Scal^{d-1}: f(x)\geq \lambda\}, \lambda>0$, contains a mode in $\Mcal$.
\end{assumption}

\subsection{Mode recovery guarantee for a unimodal density}
We start with a simple setting when $f$ has a single mode $x_0$ (that is, $|\Mcal|=1$). Our mode estimation algorithm is simple. We estimate the mode of $f$ to be the maximizer of $\fhat_n$ over $X_n$, where $\fhat_n$ is computed using \cref{eqn:eas_density_estimator}.
\begin{equation}\label{eqn:single_mode_estimate}
    \hat{x}_0 = \argmax_{x\in X_n} \fhat_n(x)
\end{equation}
We provide some intuition for why \cref{eqn:single_mode_estimate} is a reasonable strategy. \cref{assumption:differentiability}, specifically \cref{def:local_parameterization}, allows us to easily express an upper and a lower bound of density $f$ at any point $x$ in a small neighborhood of $x_0$ relative to $f(x_0)$. In addition, \cref{lem:relative_density_bound} allows us to approximate $f$ at different scales in different parts of the space by varying $\epsilon$. Judiciously combining these two facts allows us to ensure that the infimum of $\fhat_n(x)$ for any $x$ within a small ball around $x_0$ is strictly larger than the supremum of $\fhat_n(x)$ for any $x$ outside this ball. Our main result of this section for single mode recovery is the following.
\begin{theorem}\label{thm:single_mode_guarantee}
    Let $\delta>0$.  Assume that $f$ has a single mode $x_0$ and satisfies \cref{assumption:differentiability}. Suppose $k$ satisfies
    \[(i) \min\left\{\alpha\cdot C_{\delta}^2d\log m, C_{\delta}^2d\log n\right\}\leq k\]
    \[ii) k\leq \min\{\beta_1\cdot m^{\frac{4}{d+3}},\beta_2\cdot n^{\frac{4}{d+3}}\}\]
    where, $\alpha = \max\left\{4,\left(\frac{f(x_0)}{3}\right)^2,\left(\frac{32f(x_0)}{\Ccheck_{x_0}r_{x_0}^2}\right)^2\right\}, \beta_1 = \frac{1}{6}\left(\left(\frac{3f(x_0)C_{\delta}}{16\Chat_{x_0}}\right)^2\log m\right)^{\frac{d-1}{d+3}}$, and $\beta_2 = \left(\frac{S_{d-1}}{12\sqrt{d}}\right)^{\frac{4}{d+3}}\left(f(x_0)\right)^{\frac{2d+2}{d+3}}\left(\left(\frac{3C_{\delta}}{4\Chat_{x_0}}\right)^2d\log m\right)^{\frac{d-1}{d+3}}$. Let $\hat{x}_0$ be our mode estimate using \cref{eqn:single_mode_estimate}. If $n \geq \frac{9m\log(m/\delta)\max\{\|f\|_{\infty},1\}}{S_{d-1}(f(x_0)C_{\delta})^2d\log m}$, then with probability at least $1-2\delta$, we have,
    \[\|\hat{x}_0-x_0\|\leq 6\sqrt{\frac{C_{\delta}}{\Ccheck_{x_0}}f(x_0)}\left(\frac{d\log m}{k}\right)^{1/4}.\]
\end{theorem}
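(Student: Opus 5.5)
We work on the event of probability at least $1-2\delta$ on which the conclusions of \cref{lem:inner-outer-balls} and \cref{lem:empirical_mass_C_j} both hold. On this event \cref{lem:relative_density_bound} and \cref{lem:general-diameter-bound} apply. Write $\eta = C_\delta\sqrt{d\log m/k}$ and $D = \frac{4}{\sqrt3}\left(\frac{6\sqrt d k}{m}\right)^{1/(d-1)}$ for the common diameter bound on the $C_j$. Set the target radius $\rho = 6\sqrt{C_\delta f(x_0)/\Ccheck_{x_0}}\,(d\log m/k)^{1/4}$, so that $\Ccheck_{x_0}\rho^2 = 36 f(x_0)\eta$. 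The plan is to show
\[ \inf_{x\in B(x_0,\rho_0)}\fhat_n(x) \;>\; \sup_{x\notin B(x_0,\rho)}\fhat_n(x) \]
for some small $\rho_0<\rho$, and that $X_n$ meets $B(x_0,\rho_0)$. Then $\hat x_0$, being the maximizer of $\fhat_n$ over $X_n$, must lie in $B(x_0,\rho)$.

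\textbf{Step 1 (points near the mode).} Take $x\in B(x_0,\rho_0)$. On $B(x,D)$ the quadratic bound of \cref{def:local_parameterization} gives $f\ge f(x_0)-\Chat_{x_0}(\rho_0+D)^2$, which requires $\rho_0+D\le r_{x_0}$. Apply \cref{lem:relative_density_bound}(ii) at the point $x$ with $\epsilon=\Chat_{x_0}(\rho_0+D)^2$, comparing to the value $f(x_0)$ rather than to $f(x)$. This yields
\[ \fhat_n(x)\ge (1-\eta)\bigl(f(x_0)-\Chat_{x_0}(\rho_0+D)^2\bigr)-\gamma_n. \]

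\textbf{Step 2 (points far from the mode).} Take $x$ with $\|x-x_0\|>\rho$, and split into two cases. If $x\in A_{x_0}$, then every point within $D$ of $x$ has density at most $f(x_0)-\Ccheck_{x_0}(\|x-x_0\|-D)^2$. If $x\notin A_{x_0}$, the unimodality assumption (\cref{assumption:level_set_connected_component_mode_containing}) forces $f\le\lambda_{x_0}$ near $x$, and $\lambda_{x_0}\le f(x_0)-\Ccheck_{x_0}r_{x_0}^2$. In both cases \cref{lem:relative_density_bound}(i) gives
\[ \fhat_n(x)\le(1+\eta)\bigl(f(x_0)-\Ccheck_{x_0}\min\{(\rho-D)^2, r_{x_0}^2\}\bigr)+\gamma_n. \]

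\textbf{Step 3 (comparison).} The gap between the two bounds is roughly $\Ccheck_{x_0}(\rho-D)^2-\Chat_{x_0}(\rho_0+D)^2-2\eta f(x_0)-2\gamma_n$, and we need it to be positive. This is where the hypotheses enter.
\begin{itemize}
\item The constraint $k\le\beta_1 m^{4/(d+3)}$ is arranged exactly so that $\Chat_{x_0}D^2\lesssim f(x_0)\eta$, i.e. $D\ll\rho$.
\item The sample-size condition on $n$ makes $\gamma_n\lesssim f(x_0)\eta$.
\item The factor $\alpha$ in the lower bound on $k$ ensures $\rho\le r_{x_0}/2$, so we stay inside the quadratic region, and also that $\eta\le 1/2$.
\end{itemize}
With $\Ccheck_{x_0}\rho^2=36f(x_0)\eta$, the positive term $\Ccheck(\rho-D)^2\approx 9f(x_0)\eta$ or more then dominates the $O(f(x_0)\eta)$ error terms.

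\textbf{Step 4 (a sample lands near the mode).} Choose $\rho_0$ with $\Chat_{x_0}\rho_0^2\approx f(x_0)\eta$. The ball $B(x_0,\rho_0)$ then has mass about $f(x_0)\rho_0^{d-1}$, and the constraint $k\le\beta_2 n^{4/(d+3)}$ should be what makes $n\cdot f(B(x_0,\rho_0))\gtrsim\log(1/\delta)$. A Chernoff bound then puts a sample point in this ball. If this needs a separate $\delta$, it can be absorbed, or handled by reusing the empirical-mass event for some $C_j\subset B(x_0,\rho_0)$.

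The main obstacle is the bookkeeping in Steps 3 and 4: checking that the stated $\alpha,\beta_1,\beta_2$ make $D$, $\gamma_n$ and $\eta$ small enough simultaneously with the right constants. Of this, Step 4 is the least certain part.
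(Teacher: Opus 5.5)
Your proposal is correct and is essentially the paper's proof. The paper likewise separates $\sup_{x\notin B(x_0,\rtilde)}\fhat_n(x)$ from $\inf_{x\in B(x_0,r_n(x_0))}\fhat_n(x)$ by two tuned applications of \cref{lem:relative_density_bound}, and it uses the hypotheses exactly as you guessed (writing $\eta=C_\delta\sqrt{d\log m/k}$ as you do): $\beta_1$ gives $\Chat_{x_0}\diam(C_j)^2\le f(x_0)\eta$, the condition on $n$ together with the $(f(x_0)/3)^2$ term of $\alpha$ gives $\gamma_n\le f(x_0)\eta$, and $\beta_2$ (with $k\ge C_\delta^2 d\log n$) makes the ball $B(x_0,\tau\rtilde/2)$, whose radius is exactly your $\rho_0$, carry mass at least $2k/n$, after which Lemma~7 of \citet{chaudhuri2010cluster_tree} puts sample points in it. The remaining differences are bookkeeping: you use the diameter bound $D$ directly as the margin, while the paper uses margins $\rtilde/2$ and $\tau\rtilde/2$ with $\tau^2=\Ccheck_{x_0}/(8\Chat_{x_0})$ and finishes with $\sqrt{32}<6$; also, $\alpha$ actually yields $32f(x_0)\eta\le\Ccheck_{x_0}r_{x_0}^2$ rather than $\rho\le r_{x_0}/2$, but the $\min\{(\rho-D)^2,r_{x_0}^2\}$ in your Step~2 already makes that harmless.
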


\begin{remark}
In \cref{thm:single_mode_guarantee}, the upper-bound on $k$ is due to two different events: the first term roughly translates to $k =\tilde{O}\left(m^{\frac{4}{d+3}}\right)$ and ensures that all $\diam(C_j)$ are sufficiently small so that \cref{lem:relative_density_bound} can be applied, while the second term, which roughly translates to $k = \tilde{O}\left(n^{\frac{4}{d+3}}\right)$, ensures that there are enough data points from the sample $X_n$ that fall within a small ball centered at the mode $x_0$, 
\end{remark}

\begin{proof}[Proof sketch]
We define, $r_n(x_0) = \inf\{r: B(x_0,r)\cap X_n\neq \emptyset\}$
and choose $\tau\in(0,1)$ and $\rtilde$  satisfying $2r_n(x_0)/\tau \leq \rtilde \leq r_{x_0}$.
Since $\rtilde\geq 2r_n(x_0)$, the crux of our proof technique relies on establishing, 
\begin{equation}\label{eqn:single_mode_proof_crux}
\sup_{x\in\Scal^{d-1}\setminus B(x_0,\rtilde)} \fhat_n(x) < \inf_{x\in B(x_0,r_n(x_0))} \fhat_n(x)
\end{equation}
as this will ensure that $\|\xhat_0-x\|\leq\rtilde$. To see this, note that if $\sup_{x\in B(x_0,\rtilde)\setminus B(x_0,r_n(x_0))} \fhat_n(x) > \sup_{x\in \Scal^{d-1}\setminus B(x_0,\rtilde)} \fhat_n(x)$, then it must be the case that $\xhat_0\in B(x_0,\rtilde)$. On the other hand, if $\sup_{x\in B(x_0,\rtilde)\setminus B(x_0,r_n(x_0))} \fhat_n(x) \leq \sup_{x\in \Scal^{d-1}\setminus B(x_0,\rtilde)} \fhat_n(x)$, then $\xhat_0\in B(x_0, r_n(x_0))\subset B(x_0, \rtilde)$. When $\rtilde$ is appropriately chosen, this yields the claimed result. 

To prove \cref{eqn:single_mode_proof_crux}, we repeatedly invoke \cref{lem:relative_density_bound} with different $\epsilon$ values. In particular, we find the upper bound of the l.h.s. of \cref{eqn:single_mode_proof_crux} by invoking \cref{lem:relative_density_bound} with properly tuned $\epsilon$, and show that this is less than the lower bound of the r.h.s. of \cref{eqn:single_mode_proof_crux} obtained by invoking again \cref{lem:relative_density_bound} with properly tuned (but different than before) $\epsilon$.
\end{proof}

Now, with the appropriate choice of hyperparameters, we immediately get the following corollary.
\begin{corollary}\label{cor:single_mode_guarantee}
    Setting $m = \Theta(n)$, and $k=O\left(\left(d\log m\right)^{\frac{d-1}{d+3}}n^{\frac{4}{d+3}}\right)$ in \cref{thm:single_mode_guarantee}, we have that with probability at least $1-2\delta$,
    \[\|\hat{x}_0-x_0\| = O\left(\left(\frac{d\log m}{n}\right)^{\frac{1}{d+3}}\right) = \tilde{O}\left(n^{-\frac{1}{d+3}}\right).\]
\end{corollary}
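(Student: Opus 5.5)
The corollary follows by plugging the stated choices of $m$ and $k$ into \cref{thm:single_mode_guarantee}, checking the theorem's hypotheses, and simplifying its bound. The plan is to treat $f(x_0)$, $\Ccheck_{x_0}$, $\Chat_{x_0}$, $r_{x_0}$, $\|f\|_\infty$, $\delta$ and $d$ as constants. We set $m = c_1 n$ for a constant $c_1$ and
\[ k = c_2\,(d\log m)^{\frac{d-1}{d+3}}\, n^{\frac{4}{d+3}}, \]
with $c_2$ small enough that both upper constraints in condition (ii) of \cref{thm:single_mode_guarantee} hold.

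\textbf{Checking condition (ii).} With $m = c_1 n$, the term $\beta_1 m^{\frac{4}{d+3}}$ equals a constant times $(\log m)^{\frac{d-1}{d+3}} n^{\frac{4}{d+3}}$. The term $\beta_2 n^{\frac{4}{d+3}}$ equals a constant times $(d\log m)^{\frac{d-1}{d+3}} n^{\frac{4}{d+3}}$. So taking $c_2$ below the minimum of the relevant constants (absorbing the factor $d^{\frac{d-1}{d+3}}\le d$ into the constant for $\beta_1$) makes (ii) hold.

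\textbf{Checking condition (i).} Since $k$ grows polynomially in $n$, it exceeds $\alpha C_\delta^2 d\log m$ once $n$ is large, so the lower bound in (i) holds.

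\textbf{Checking the sample-size requirement.} The theorem needs
\[ n \geq \frac{9m\log(m/\delta)\max\{\|f\|_\infty,1\}}{S_{d-1}(f(x_0)C_\delta)^2 d\log m}. \]
With $m = c_1 n$ the right side is
\[ c_1 n \cdot \frac{9\max\{\|f\|_\infty,1\}}{S_{d-1}(f(x_0)C_\delta)^2 d}\cdot\frac{\log(m/\delta)}{\log m}. \]
The ratio $\log(m/\delta)/\log m$ tends to $1$ as $m$ grows, so the requirement holds for large $n$ provided $c_1 \le S_{d-1}(f(x_0)C_\delta)^2 d\,/\,\bigl(18\max\{\|f\|_\infty,1\}\bigr)$. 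This is compatible with $m=\Theta(n)$; the only other use of $m$ is $m>k$, which is automatic for large $n$. I expect this to be the step needing the most care: it forces the constant $c_1$ to be small, and one must confirm that no other condition needs $m$ large relative to $n$. From the list above, none does.

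\textbf{Simplifying the bound.} The theorem then gives, with probability at least $1-2\delta$,
\[ \|\hat{x}_0-x_0\| \le 6\sqrt{C_\delta f(x_0)/\Ccheck_{x_0}}\,\left(\frac{d\log m}{k}\right)^{1/4}. \]
Substituting $k$,
\[ \frac{d\log m}{k} \asymp (d\log m)^{1-\frac{d-1}{d+3}}\, n^{-\frac{4}{d+3}} = (d\log m)^{\frac{4}{d+3}}\, n^{-\frac{4}{d+3}}. \]
Taking the fourth root gives $O\bigl((d\log m/n)^{\frac{1}{d+3}}\bigr)$. Since $\log m = O(\log n)$, this is $\tilde O\bigl(n^{-\frac{1}{d+3}}\bigr)$.
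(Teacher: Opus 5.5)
Your proposal is correct and follows the paper's own route: the paper simply says the corollary follows "immediately" by substituting $m=\Theta(n)$ and $k\asymp (d\log m)^{\frac{d-1}{d+3}}n^{\frac{4}{d+3}}$ into \cref{thm:single_mode_guarantee} and simplifying $(d\log m/k)^{1/4}$. Your proposal also spells out checks the paper leaves to the reader. These are that condition (ii) holds for a small $c_2$, that the sample-size requirement only forces a small constant $c_1$ in $m=c_1 n$, and that $k$ must be of exactly this order (not merely $O(\cdot)$) for the stated rate to follow.
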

When $f$ is restricted to $\Scal^{d-1}$, there are effectively $d-1$ degrees of freedom, and we note that the rate of \cref{cor:single_mode_guarantee}, ignoring logarithmic factors, is optimal for any twice-differentiable density $f$ \citep{tsybakov1990_mode, donoho1991_rates_of_convergence} and matches the results of \cite{dasgupta2014_knn_density}.


  
  
    
    
    
  


\subsection{Mode recovery guarantee in presence of multiple modes}
Next, we address the mode estimation problem when $f$ has an unknown number of modes. Our solution technique relies on estimating the connected components (CCs) of various level sets of the unknown density, so that, when the modes are sufficiently well separated from each other by apropriate low density regions, the maximizer of $\fhat_n$ within any CC of an appropriate level set will be a good approximation of the mode contained within that CC. It is well known that CCs of the mutual $k$-NN graph and its variants constructed using samples drawn \iid ~ from $f$  are good estimates of the CCs of the corresponding levels sets of the unknown density $f$ \citep{kpotufe2011_pruning,chaudhuri2010cluster_tree,balakrishnan2013_clustr_tree_manifold,dasgupta2014_knn_density}. 
In order to estimate the CCs of level sets of $f$, we define the set of nested graphs $G(\lambda)$ using $\fhat_n$ as given in \cref{eqn:eas_density_estimator}, which are sub-graphs of a mutual $k$-NN graph on the sample $X_n$.
\begin{definition}\label{def:density_graph}
    Given $\lambda\in \RR$, let $G(\lambda)$ denotes the graph whose vertices are in $X_{n,\lambda} \triangleq \{x\in X_n: \fhat_n(x)\geq\lambda\}$, and whose vertices $x, x'$ are connected by an edge if $\|x-x'\|\leq \alpha\cdot\min\{r_k(x),r_k(x')\}$, for some $\alpha\geq\sqrt{2}$, where $r_k(y)$ is the distance from $y$ to its $k^{th}$ nearest neighbor in $X_n$.
\end{definition}

When the density $f$ has multiple modes, we guarantee that the modes which are well separated by other modes by  appropriate low density regions -- termed as \emph{salient} modes -- can be recovered under mild conditions. In order to define salient modes we first define the notion of $r$-separation as in \cite{chaudhuri2010cluster_tree}.
\begin{definition}[$r$-separation]\label{def:r_separation}
$A, A'\subset\Scal^{d-1}$ are $r$-separated if there exists a separating set $S\subset \Scal^{d-1}$ such that every path from $A$ to $A'$ crosses $S$ and $\sup_{x\in S_r}f(x) < \inf_{x\in A\cup A'}f(x)$.   
\end{definition}
Following \cite{dasgupta2014_knn_density}, we define a mode $x$ to be $r$-salient if the critical set $A_x$ of \cref{def:local_parameterization} is well separated from all components at the level where it appears.

\begin{definition}[$r$-salient modes]
    A mode $x$ of $f$ is said to be $r$-salient for $r>0$ if the following holds. There exists $A_x$ as in \cref{def:local_parameterization}, which is a CC of say $\Scal^{d-1}_{\lambda_x} \triangleq \{x\in\Scal^{d-1} : f(x)\geq \lambda_x\}$ and $A_x$ is $r$-separated from $\Scal^{d-1}_{\lambda_x}\setminus A_x$.
\end{definition}

\begin{algorithm}[tb]{\small
   \caption{Recovery of any mode}
   \label{alg:multiple_mode_estimation}
\begin{algorithmic}
   \STATE {\bfseries Input:} $X_n, \tilde{\epsilon}$
   \STATE Initialize $\Mcal_n\leftarrow \emptyset$.
   \FOR{$i=1$ { to} $n$, { in decreasing order of} $\fhat_n(x_i)$}
   \STATE $\lambda \leftarrow \fhat_n(x_i)$
   \STATE Let $\{\tilde{A}_j\}_{j=1}^{m_i}$ be the CC of $G(\lambda-\tilde{\epsilon})$ disjoint from $\Mcal_n$
   \STATE $\Mcal_n \leftarrow \Mcal_n \cup \left\{x'_j \triangleq\argmax_{x\in\tilde{A}_j\cap X_{n,\lambda}} \fhat_n(x) \right\}_{j=1}^{m_i}$ 
   \ENDFOR
   \STATE {\bfseries Return } $\Mcal_n$
\end{algorithmic}}
\end{algorithm}
Our proposed approach is summarized in Alg. \ref{alg:multiple_mode_estimation}, where, $\tilde{\epsilon} = \tilde{\epsilon}(n)$ is an user defined hyper-parameter which is a decreasing function of $n$. In particular, choosing $\tilde{\epsilon}=\gamma_n$, where $\gamma_n$ was defined in Lemma \ref{lem:relative_density_bound} suffices. Alg. \ref{alg:multiple_mode_estimation} outputs a set of estimated modes $\Mcal_n$. Our next theorem guarantees that any salient mode in $\Mcal$ can be recovered by Alg. \ref{alg:multiple_mode_estimation} at the optimal rate $\tilde{O}\left(n^{-\frac{1}{d+3}}\right)$ for the choice of $k=\tilde{O}\left(n^{\frac{4}{d+3}}\right)$ and $m = \Theta(n)$.

\begin{theorem}\label{thm: multiple_modes_guarantee}
    Assume $f$ satisfies Assumptions \ref{assumption:differentiability} and \ref{assumption:level_set_connected_component_mode_containing}. Suppose $k$ satisfies
    $(i) \min\left\{\alpha_1\cdot C_{\delta}^2d\log m, C_{\delta}^2d\log n\right\}\leq k$, and (ii) $k\leq \min\{\beta_1\cdot m^{\frac{4}{d+3}},\beta_2\cdot n^{\frac{4}{d+3}}\}$, where, \\
    $\alpha_1 = \max\left\{4,\left(\frac{f(x_0)}{3}\right)^2,\left(\frac{40f(x_0)}{\Ccheck_{x_0}\min\left\{\left(\frac{r_{x_0}}{2}\right)^2, \left(\frac{r}{\alpha}\right)^2\right\}}\right)^2\right\}$,\\
    $\beta_1 = \frac{1}{6}\left(\left(\frac{15\lambda_{x_0}C_{\delta}}{64\Chat_{x_0}}\right)^2\log m\right)^{\frac{d-1}{d+3}}$, and 
    $\beta_2 = \left(\frac{S_{d-1}}{6\sqrt{d}}\right)^{\frac{4}{d+3}}\left(\lambda_{x_0}\right)^{\frac{2d+2}{d+3}}\left(\left(\frac{15C_{\delta}}{16\Chat_{x_0}}\right)^2d\log m\right)^{\frac{d-1}{d+3}}$. Let $x_0$ be an $r$-salient mode for some $r>0$ and $\Mcal_n$ be the set of modes returned by Alg. \ref{alg:multiple_mode_estimation}. If $n \geq \frac{9m\log(m/\delta)\max\{\|f\|_{\infty},1\}}{S_{d-1}(f(x_0)C_{\delta})^2d\log m}$, then with probability at least $1-2\delta$, there exists $\hat{x}_0\in\Mcal_n$ such that,
    \[\|\hat{x}_0-x_0\|\leq 7\sqrt{\frac{C_{\delta}}{\Ccheck_{x_0}}f(x_0)}\left(\frac{d\log m}{k}\right)^{1/4}.\]
\end{theorem}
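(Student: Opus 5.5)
We follow the template of \cref{thm:single_mode_guarantee}, now localized to the critical set $A_{x_0}$ and combined with a cluster-tree argument in the spirit of \cite{dasgupta2014_knn_density,chaudhuri2010cluster_tree}. Throughout we work on the event (probability at least $1-2\delta$) where \cref{lem:inner-outer-balls} and \cref{lem:empirical_mass_C_j} hold. On this event \cref{lem:relative_density_bound,lem:general-diameter-bound} apply uniformly in $x$, and the lower bound on $n$ makes $\gamma_n \le \tfrac13 f(x_0)C_\delta\sqrt{d\log m/k}$, so every error term is of order $f(x_0)C_\delta\sqrt{d\log m/k}$. We set $\rtilde \asymp \sqrt{(C_\delta/\Ccheck_{x_0}) f(x_0)}\,(d\log m/k)^{1/4}$. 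Condition (i), through the constant $\alpha_1$, forces $\rtilde \le \min\{r_{x_0}/2, r/\alpha\}$.

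\textbf{Step 1 (local comparison).} We repeat the proof of \cref{thm:single_mode_guarantee} inside $A_{x_0}$. On $B(x_0,r_n(x_0))$ we use the quadratic upper bound of \cref{def:local_parameterization}; on $A_{x_0}\setminus B(x_0,\rtilde)$ we use the quadratic lower bound. Applying \cref{lem:relative_density_bound} with separately tuned $\epsilon$ then gives
\[\sup_{x\in A_{x_0}\setminus B(x_0,\rtilde)}\fhat_n(x) < \inf_{x\in B(x_0,r_n(x_0))}\fhat_n(x) - \tilde\epsilon .\]
The upper bound $k\le\beta_1 m^{4/(d+3)}$ keeps the $\diam(C_j)$ small enough via \cref{lem:general-diameter-bound}. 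The bound $k\le \beta_2 n^{4/(d+3)}$ guarantees a sample point in $B(x_0,r_n(x_0))$ with $r_n(x_0)\le \tau\rtilde/2$. Here $\lambda_{x_0}$ replaces $f(x_0)$ in the constants because we only need $f\ge\lambda_{x_0}$ on $A_{x_0}$.

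\textbf{Step 2 (separation).} Let $x^\ast$ be the maximizer of $\fhat_n$ over $X_n\cap A_{x_0}$; by Step 1 it lies in $B(x_0,\rtilde)$. Consider the iteration of \cref{alg:multiple_mode_estimation} with $\lambda=\fhat_n(x^\ast)$. We must show that the CC $\tilde A$ of $G(\lambda-\tilde\epsilon)$ containing $x^\ast$ satisfies three properties:
\begin{enumerate}
\item[(a)] it contains no sample point outside $(A_{x_0})_{r}$;
\item[(b)] it has not already been absorbed into $\Mcal_n$;
\item[(c)] its $\argmax$ over $X_{n,\lambda}$ is $x^\ast$ or lies within $B(x_0,\rtilde)$.
\end{enumerate}
For (a): points with $\fhat_n\ge\lambda-\tilde\epsilon$ have true density above $\sup_{S_r}f$, by \cref{lem:relative_density_bound} and the $r$-separation margin. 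This holds once $k$ is large, which the $\alpha_1$ term ensures. Edge lengths are at most $\alpha r_k(x)$, and $r_k(x)\lesssim (k/n)^{1/(d-1)} \ll r/\alpha$ in high-density regions, so no edge can jump across $S_r$. Hence $\tilde A$ stays on the $A_{x_0}$ side of the separator, and using \cref{assumption:level_set_connected_component_mode_containing} together with the fact that $A_{x_0}\cap\Mcal=\{x_0\}$, we conclude that any earlier-added mode estimates lie in other components. This gives (b). Property (c) then follows from Step 1 applied to $\tilde A\subset A_{x_0}$, with the slight enlargement contributing the factor $7$ instead of $6$.

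\textbf{Main obstacle.} We expect the hardest step to be the graph-connectivity part of Step 2. We must show, first, that sample points near $x_0$ with $\fhat_n\ge\lambda-\tilde\epsilon$ do not get linked through the low-density separator. We must also show that no earlier iteration (with larger $\lambda$) placed a point of $A_{x_0}$ into $\Mcal_n$ while leaving $x^\ast$'s component marked as disjoint. For the first claim we would bound $r_k$ via the mass of $C_j$-type balls using \cref{lem:empirical_mass_C_j}-style Bernstein bounds. For the second, we would argue directly that $x^\ast$ is the first point of $A_{x_0}$ processed, because it has the largest $\fhat_n$ there; so no earlier iteration can have inserted a point of its component.
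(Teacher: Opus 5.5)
Your architecture matches the paper's: a local comparison of $\fhat_n$ near $x_0$, followed by a cluster-tree separation argument using $r$-separation and a bound on $r_k$. Choosing $\lambda=\fhat_n(x^\ast)$ with $x^\ast=\argmax_{x\in X_n\cap A_{x_0}}\fhat_n(x)$ is a clean way to organize (b).

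\textbf{The gap.} In (a) you show that the component $\tilde A$ of $x^\ast$ stays on $x_0$'s side of the separator, i.e.\ inside $\Scal^{d-1}_{S\rightsquigarrow x_0}$. In (b) and (c) you then use $\tilde A\subset A_{x_0}$. These sets differ: on $\Scal^{d-1}_{S\rightsquigarrow x_0}\setminus A_{x_0}$ one has $f<\lambda_{x_0}$ but possibly $f>\sup_{S_r}f$. Nothing in your argument removes these sample points from $G(\lambda-\tilde\epsilon)$.
\begin{itemize}
\item Step 1 only controls $\fhat_n$ on $A_{x_0}\setminus B(x_0,\rtilde)$.
\item Your criterion in (a), ``vertices have true density above $\sup_{S_r}f$'', does not exclude these points. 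It is also not something \cref{lem:relative_density_bound} gives pointwise. The lemma bounds $\fhat_n(x)$ by roughly $f(x)+\epsilon$ only when $f$ is controlled on a whole ball of radius $\diam(C_j)$ around $x$, and no smoothness is assumed away from the modes.
\item Around such foothill vertices you have no density lower bound, so $r_k\lesssim (k/n)^{1/(d-1)}$ is unavailable and long edges across $S$ are not ruled out.
\item Suppose a foothill point $q\in\tilde A$ has $\fhat_n(q)>\lambda$. Then $q$ is processed before $x^\ast$. Its component at that level lies inside $\tilde A$ (the graphs are nested), and it either already meets $\Mcal_n$ or receives a point. Either way $\tilde A$ is skipped at level $\lambda$, which breaks both your first-point-processed argument and (c).
\item Appealing to \cref{assumption:level_set_connected_component_mode_containing} does not help: earlier insertions are sample points, not modes.
\end{itemize}

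\textbf{The missing step.} The paper bounds $\fhat_n$ over the whole region $\Scal^{d-1}_{S\rightsquigarrow x_0}\setminus B(x_0,\rbar)$, with $\rbar=r_{x_0}/2$.
\begin{itemize}
\item Set $\bar{F}=f(x_0)-\Ccheck_{x_0}(\rbar/2)^2$. For every $x$ in this region, $B(x,\rtilde/2)\subset(\Scal^{d-1}_{S\rightsquigarrow x_0}\setminus B(x_0,\rbar/2))\cup S_r$, since $\rtilde\le\rbar$ and $\rtilde/2\le r$.
\item On that set $f\le\bar{F}$. This uses the quadratic bound on $A_{x_0}$, the fact that $f<\lambda_{x_0}\le\inf_{A_{x_0}}f$ elsewhere on $x_0$'s side, and $\sup_{S_r}f\le\inf_{A_{x_0}}f\le\bar{F}$.
\item The level trick $\epsilon=\bar{F}-f(x)$ in \cref{lem:relative_density_bound} then gives $\fhat_n\le(1+C_\delta\sqrt{d\log m/k})\bar{F}+\gamma_n<\lambda-\tilde\epsilon$ there. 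The same argument covers $S_{r/2}$.
\item This also repairs Step 1 at boundary points of $A_{x_0}$, whose $\rtilde/2$-balls leave $A_{x_0}$.
\end{itemize}
Consequently every vertex on $x_0$'s side lies in $B(x_0,\rbar)$, where $f\ge\lambda_{x_0}$ on $B(x,\tau\rtilde/2)$.
\begin{itemize}
\item With $k\le\beta_2 n^{4/(d+3)}$ and a uniform ball-count bound (Lemma 7 of \cite{chaudhuri2010cluster_tree}), this gives $r_k\le\rtilde/2$. A Bernstein bound over the fixed cells $C_j$ will not do, since a cell holds only about $f(x)S_{d-1}kn/m$ points, which need not reach $k$.
\item Edges from these vertices then have length at most $\alpha\rtilde/2\le r/2$. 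They cannot cross $S$ without an endpoint in $S_{r/2}$, which is excluded.
\item Hence $\tilde A\subset B(x_0,\rbar)\subset A_{x_0}$, and your ordering argument finishes (b) and (c).
\end{itemize}

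\textbf{A smaller point.} The lower bound on $n$ yields only $\gamma_n\le f(x_0)C_\delta\sqrt{d\log m/k}$ (\cref{lem:sample_size_bound}), not a third of it. Together with the extra $\gamma_n$ margin needed for $\tilde\epsilon$, this is what forces $\rtilde^2=40f(x_0)\frac{C_\delta}{\Ccheck_{x_0}}\sqrt{d\log m/k}$ and hence the constant $7$.
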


\begin{figure*}[ht]
     \centering
     \begin{subfigure}[b]{0.24\textwidth}
         \centering
         \includegraphics[width=\textwidth]{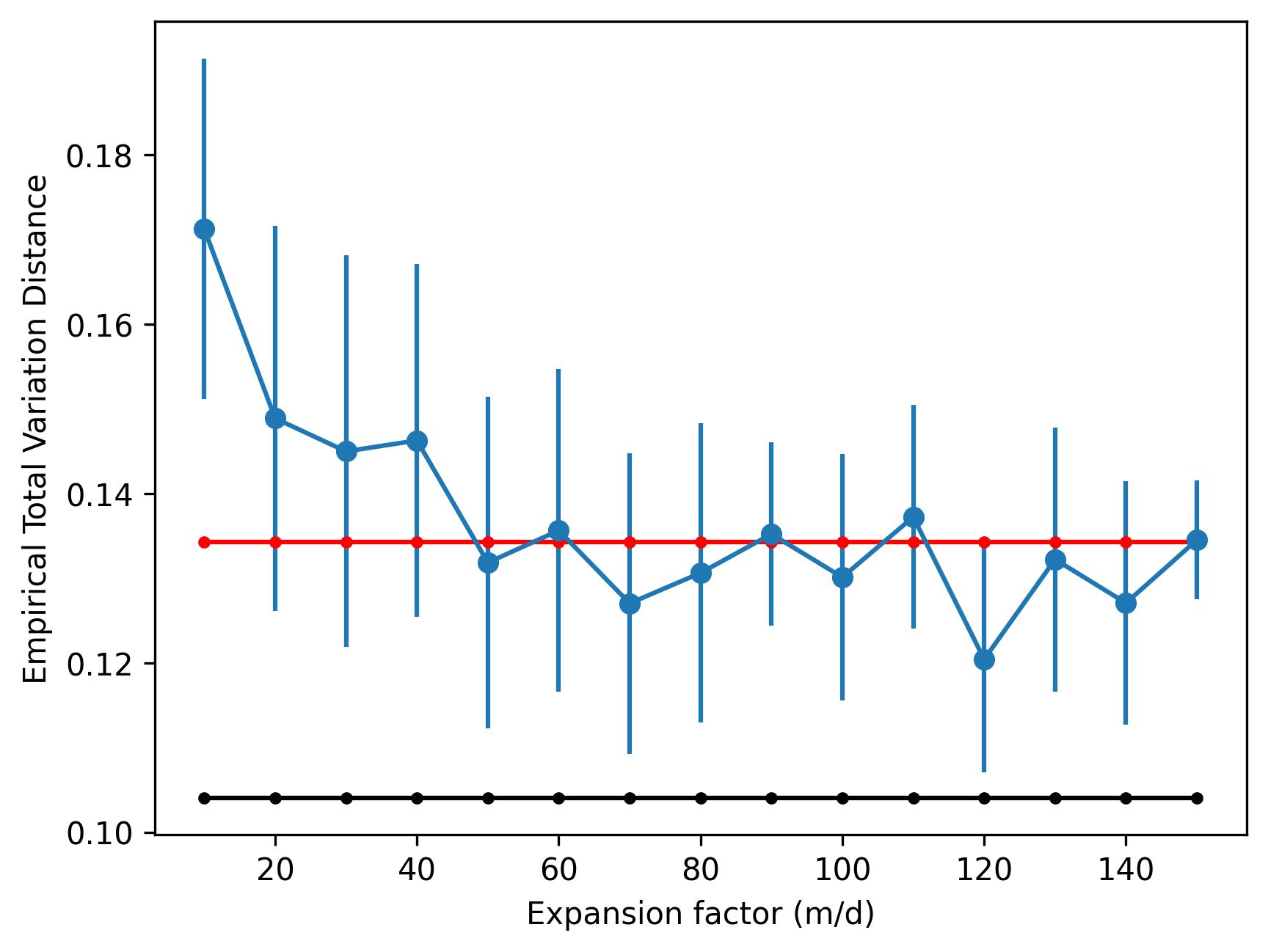}
         \caption{$d=2, \kappa_1=10, \kappa_2=5$}
         \label{fig:y equals x}
     \end{subfigure}
     \hfill
     \begin{subfigure}[b]{0.24\textwidth}
         \centering
         \includegraphics[width=\textwidth]{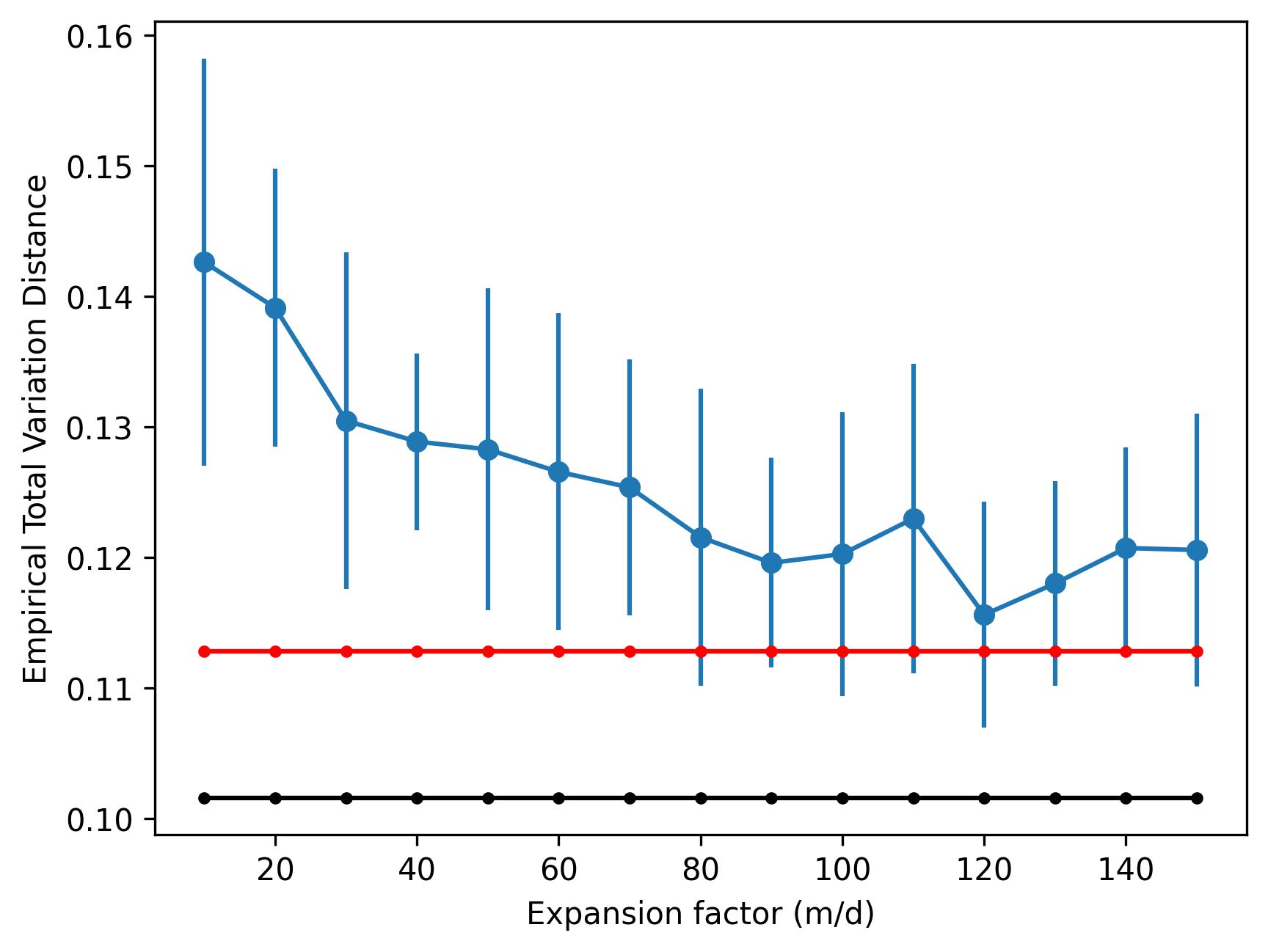}
         \caption{$d=3, \kappa_1=10, \kappa_2=5$}
         \label{fig:three sin x}
     \end{subfigure}
     \hfill
     \begin{subfigure}[b]{0.24\textwidth}
         \centering
         \includegraphics[width=\textwidth]{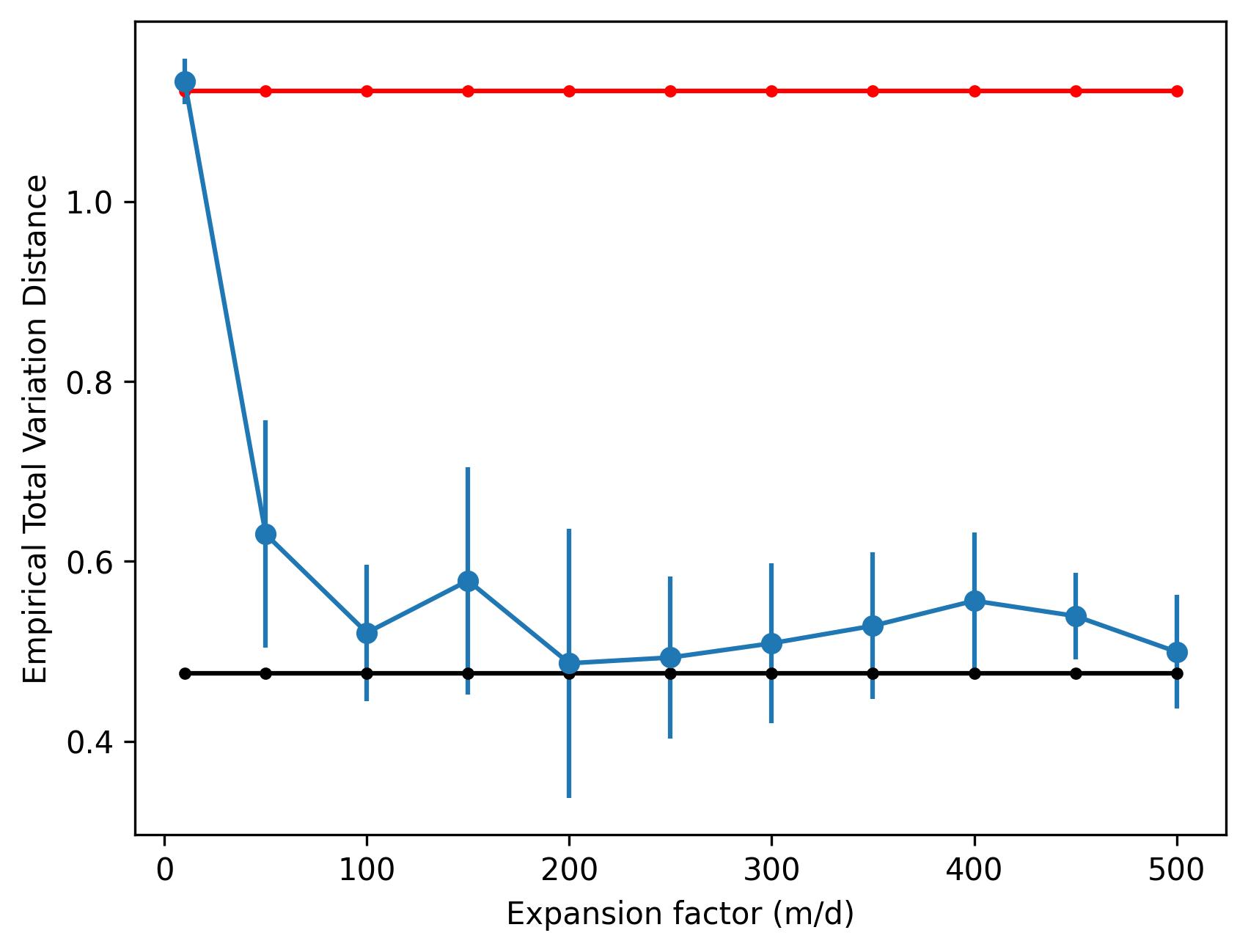}
         \caption{$d=2, \kappa_1=80, \kappa_2=100$}
         \label{fig:five over x}
     \end{subfigure}
     \hfill
     \begin{subfigure}[b]{0.24\textwidth}
     \centering
     \includegraphics[width=\textwidth]{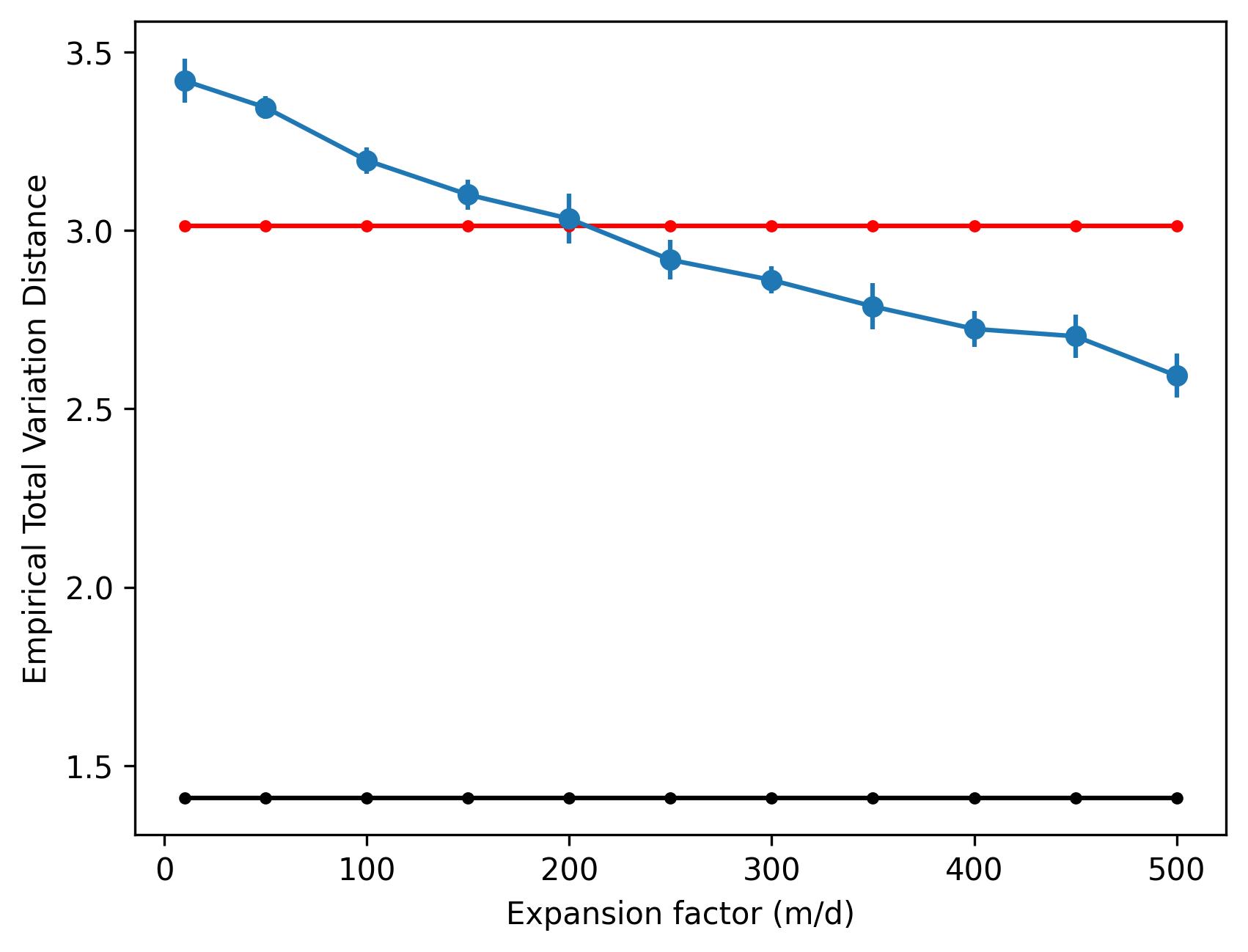}
     \caption{$d=3, \kappa_1=80, \kappa_2=100$}
     \label{fig:five over x}
     \end{subfigure}
        \caption{Color code: Red -- KNNDE, Black -- KDE,  BLUE (error bar) -- EaSDE.}
        \label{fig:density_estimation_results}
\end{figure*}
\section{Empirical evaluations}\label{sec:empirical}
Since our results hold for data lying on the unit sphere, for our empirical evaluations, we choose distributions that are supported on the unit sphere, namely, von Mises-Fisher distributions. 
For any $x\in\mathcal{S}^{d-1}$, the probability density of a von Mises-Fisher distribution is:
\begin{equation}\label{eq:von_mises_fisher}
    f(x) = \frac{\kappa^{\frac{d}{2}-1}}{(2\pi)^{\frac{d}{2}}I_{\frac{d}{2}-1}(\kappa)}\exp\left(\kappa\mu^{\top}x\right)
\end{equation}
where, $\mu\in\mathcal{S}^{d-1}$ is the mean direction, $\kappa>0$ is the concentration parameter, $d$ is the dimension, and $I$ is the modified Bessel function of the first kind.  The distribution becomes narrower around $\mu$ with increasing $\kappa$. 
For our empirical evaluations, we sample data points from a mixture of two von Mises-Fisher distributions given by:
\begin{equation}\label{eq:von_mises_fisher_mixture}
    f(x) = wf_1(x)+(1-w)f_2(x)
\end{equation}
where, for $i=1,2$,  $f_i(x)$ is specified,  using \eqref{eq:von_mises_fisher}, by mean direction $\mu_i$, concentration parameter $\kappa_i$ and $w\in(0,1)$ is the mixture coefficient. We consider data dimensionality $d=2$ and $3$ for our experiments.
We compare our proposed density estimation method (EaSDE) with kNN based density estimation method (KNNDE) of \cite{dasgupta2014_knn_density} and kernel density estimation method (KDE). For each of these methods we train the respective density estimators using the training set, select  hyper-parameters using the validation set\footnote{See Appendix \ref{sec:empirical_evaluations_extra} for details of hyper-parameter selection, data generation process and other experimental details.} and evaluate the performance on the test set. We measure the accuracy of the density estimators using empirical total variation distance (ETV) defined by $ETV(f,\hat{f})=\frac{1}{2M}\sum_{i=1}^M|f(x_i)-\hat{f}(x_i)|$, where, $f$ is the true density, $\hat{f}$ is the estimated density and $x_1,\ldots,x_M$ is the test set. We present our results in Figure \ref{fig:density_estimation_results}, where we plot expansion factor, which is $m/d$, on the $x$-axis and plot the ETV on the $y$-axis. As can be seen from Figure \ref{fig:density_estimation_results}, ETV for EaSDE generally decreases with increasing expansion factor. In sub-figure (a) and (b), we use smaller $\kappa_1$ and $\kappa_2$ values resulting in a flatter true distribution and using a reasonably smaller expansion factor, EaSDE achieves comparable performance to that of KNNDE and KDE. In sub-figure (c) and (d), we used higher values for $\kappa_1$ and $\kappa_2$ resulting in a concentrated component distributions around the component means. In this case, ETV is much higher for all methods, and KNNDE in particular performs poorly. Here EaSDE requires a larger expansion factor to reduce ETV, more so for higher $d$.

\section{Conclusions and limitations}\label{sec:conclusions}
In this paper we studied the suitability of expand-and-sparsify representations for two fundamental statistical problems -- density estimation and mode estimation. For density estimation, we demonstrated that given $n$ \iid ~examples from a density $f$, there is an estimator $\fhat_n$ that is a linear function of the expand-and sparsify representation that achieves a minimax-optimal convergence rate 
in $\ell_{\infty}$ distance.
Our bounds hold with high probability, simultaneously for all $x$ in the support of $f$. For mode estimation, we presented simple algorithms on top of this density estimator and demonstrated that under mild conditions, modes of $f$ can be recovered at a rate $\tilde{O}\left(n^{-\frac{1}{d+3}}\right)$ with high probability.

For mode estimation, we assume that $f$ is twice differentiable in a neighborhood of each mode and our mode recovery rate is optimal up to a logarithmic factor. It is not immediately clear whether this extra logarithmic factor is an artifact of our proof or it is a limitation of our algorithm. 

If the data is on a manifold that is not supported on the sphere, then our rates do not hold, since the sparsification scheme implies that $h(cx) = h(x)$ for any $c > 0$. But if the data comes from a submanifold of the sphere, then our rates still hold, but they are not adaptive to the intrinsic dimension of the submanifold. It is an interesting future direction to consider expand-and-sparsify schemes that adapt to the intrinsic dimension of a dataset.

The expand-and-sparsify representation also bears a strong resemblance to certain algorithms arising in the hyperdimensional computing literature. As a future research direction, it would be very interesting to explore the computational benefits of the expand-and-sparsify representation on non-traditional computer architectures.



\subsubsection*{Acknowledgements} We thank the anonymous reviewers for their constructive feedback. KS gratefully acknowledges funding from the “NSF AI Institute for Foundations of Machine Learning (IFML)” (FAIN:2019844).

\bibliography{refs}

\clearpage
\appendix
\thispagestyle{empty}

\onecolumn
\aistatstitle{Optimal rates for density and mode estimation with expand-and-sparsify representations: 
Supplementary Materials}

\section{Proofs from section \ref{sec:density_rates}}\label{sec:density_rates_proofs}
\subsection{Proof of \cref{lem:inner-outer-balls}}

For a set $B \subset \Scal^{d-1}$, let 
\[\nu_m(B) = \frac{1}{m} \sum_{j=1}^m \ind[\theta_j \in B],\]
where $\theta_1, \ldots, \theta_m$ are the random vectors drawn from $\nu$.

\begin{lemma}[\cite{chaudhuri2010cluster_tree}]
\label{lem:CD-vc-ball-bound}
    Assume $k \geq d \log m$ and fix $\delta > 0$. Then there is an absolute constant $c_o$ such that with probability at least $1-\delta$, the following holds for every ball $B \subset \RR^d$:
    \begin{align*}
        \nu(B) \geq \frac{k}{m}\left(1 + C_\delta \sqrt{\frac{d \log m}{k}} \right) \ & \Longrightarrow \ \nu_m(B) \geq \frac{k}{m} \\ 
        \nu(B) \leq \frac{k}{m}\left(1 - C_\delta \sqrt{\frac{d \log m}{k}} \right) \ & \Longrightarrow \ \nu_m(B) < \frac{k}{m},
    \end{align*}
    where $C_\delta = 2 c_o \log(2/\delta)$.
\end{lemma}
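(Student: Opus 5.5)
This is the uniform relative deviation bound for VC classes of Chaudhuri and Dasgupta. I would derive it from a relative (multiplicative) VC inequality applied to the class of balls, which are spherical caps on $\Scal^{d-1}$.

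\textbf{Step 1: uniform relative deviation.} The class $\mathcal{B}$ of balls in $\RR^d$ has VC dimension at most $d+1$, so its shatter coefficient on $2m$ points is at most $(2m)^{d+1}$. Standard symmetrization gives the relative VC inequality. Concretely, with probability at least $1-\delta$, simultaneously for all $B\in\mathcal{B}$,
\[ \nu(B) - \nu_m(B) \leq \beta_m\sqrt{\nu(B)}, \qquad \nu_m(B)-\nu(B)\leq \beta_m^2 + \beta_m\sqrt{\nu(B)}, \]
where $\beta_m = \sqrt{(4/m)\big((d+1)\log(2m)+\log(8/\delta)\big)}$. I would cite this as the Vapnik--Chervonenkis / Anthony--Shawe-Taylor relative bound rather than reprove it. This is the main technical ingredient. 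Everything else is algebra.

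\textbf{Step 2: bound $\beta_m$.} Using $k\geq d\log m$ and $m\geq 2$, absorb constants to get
\[ \beta_m \leq c\sqrt{\log(2/\delta)}\,\sqrt{d\log m/m} \leq c'\log(2/\delta)\sqrt{d\log m/m}. \]
Write $\eta = C_\delta\sqrt{d\log m/k}$, so that $\beta_m\sqrt{k/m}$ is at most a small constant fraction of $\eta\, k/m$. Choosing $c_o$ large makes this fraction as small as needed, say $\le \eta k/(4m)$ after accounting for the factor $\sqrt{1\pm\eta}$.

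\textbf{Step 3: first implication.} Suppose $\nu(B)\ge (k/m)(1+\eta)$. From $\nu_m \ge \nu-\beta_m\sqrt{\nu}$, and since the map $t\mapsto t-\beta_m\sqrt t$ is increasing once $t\ge\beta_m^2/4$ (which holds here), we get
\[ \nu_m(B)\ge \tfrac{k}{m}(1+\eta) - \beta_m\sqrt{\tfrac{k}{m}(1+\eta)}. \]
This is at least $k/m$ provided $\beta_m\sqrt{1+\eta}\le \eta\sqrt{k/m}$, which is exactly what Step 2 delivers. If $\eta\ge1$ there is a subtlety, since $\sqrt{1+\eta}\le\sqrt{2\eta}$ must be used; it still works because then $\eta\sqrt{k/m}\gtrsim\sqrt{\eta}\,\beta_m$ by the same constants.

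\textbf{Step 4: second implication.} Suppose $\nu(B)\le (k/m)(1-\eta)$. If $\eta\ge1$, the hypothesis forces $\nu(B)=0$, hence $\nu_m(B)=0<k/m$ almost surely for the countably relevant balls. Strictly, one argues via the uniform bound, which gives $\nu_m\le\beta_m^2<k/m$. Otherwise,
\[ \nu_m(B)\le \tfrac{k}{m}(1-\eta)+\beta_m^2+\beta_m\sqrt{k/m}. \]
Both $\beta_m^2\lesssim \log(2/\delta) d\log m/m\le \eta k/(4m)$, using $k\ge d\log m$ so that $\sqrt{d\log m/k}\le1$, and $\beta_m\sqrt{k/m}\le \eta k/(2m)$. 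Hence $\nu_m(B)<k/m$.

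The only delicate point is tracking constants so that a single $C_\delta=2c_o\log(2/\delta)$ works. Using $\log(2/\delta)\ge\sqrt{\log(2/\delta)}$ for $\delta\le 2/e$ handles that.
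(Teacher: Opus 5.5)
Your proposal is correct and follows the same route as the cited source. The paper does not reprove this lemma; it imports it from Chaudhuri and Dasgupta, who obtain it exactly as you do, from the Vapnik--Chervonenkis relative deviation bound for the class of balls (VC dimension $d+1$) followed by the same algebra with $\beta_m \lesssim \sqrt{\log(2/\delta)\, d\log m/m}$. Two points of bookkeeping: keep the $\sqrt{\log(2/\delta)}$ form of $\beta_m$ whenever you compare $\beta_m^2$ with $C_\delta d\log m/m$, since your looser $\log(2/\delta)$ bound makes $\beta_m^2$ scale like $\log^2(2/\delta)$ and breaks the $\eta\ge 1$ branch of Step 3 for small $\delta$. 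Also, in Step 4 the case $\eta>1$ is vacuous and $\eta=1$ is already covered by your bound $\beta_m^2\le \eta k/(4m)$, so the claim that $\nu_m(B)=0$ almost surely is unnecessary; it is in fact false for a ball meeting the sphere only at some $\theta_j$.
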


We can use \Cref{lem:CD-vc-ball-bound} to prove \cref{lem:inner-outer-balls}.

\begin{proof}[Proof of \cref{lem:inner-outer-balls}]
First we observe that because $\nu$ is uniform over $\Scal^{d-1}$, for any value $p \in (0,1)$, there is a unique radius $r \in (0,2)$ such that $\nu(B(x, r)) = p$ for any $x \in \Scal^{d-1}$. Observe that the bounds on $k$ imply that
\[ 0 < \frac{k}{m}\left(1 - C_\delta \sqrt{\frac{d \log m}{k}} \right) \leq \frac{k}{m}\left(1 + C_\delta \sqrt{\frac{d \log m}{k}} \right) < 2. \]

The proof that $C_j \subset B(\theta_j, r_{\out})$ follows directly from Lemma 11 in \cite{dasgupta2020expressivity}, but we include the proof here for completeness. Let us condition on the events of \Cref{lem:CD-vc-ball-bound} occurring. 
    
By definition, every ball of radius $r_{\out}$ centered at some $x \in \Scal^{d-1}$ has $\nu$-mass 
\[\nu(B(x, r_{\out})) = \frac{k}{m} \left( 1 + C_\delta \sqrt{\frac{d \log m}{k}}\right).\]
By \Cref{lem:CD-vc-ball-bound}, this implies that there are at least $k$ $\theta_j$'s in each $B(x, r_{\out})$.
Another way to say this is that every $x \in \Scal^{d-1}$ will have its nearest $k$ $\theta_j$'s within distance $r_{\out}$. Thus, any $h_j(x)$ will only be activated for points $x$ within distance $r_{\out}$ of $\theta_j$, i.e. $C_j \subset B(\theta_j, r_{\out})$.

To see that $B(\theta_j, r_{\inner}) \subset C_j$ observe again that by definition, every ball of radius $r_{\inner}$ centered at some $x \in \Scal^{d-1}$ has $\nu$-mass 
\[\nu(B(x, r_{\inner})) = \frac{k}{m} \left( 1 - C_\delta \sqrt{\frac{d \log m}{k}}\right).\]
Thus, by \Cref{lem:CD-vc-ball-bound}, every ball $B(x,r_{\inner})$ will contain fewer than $k$ points. Thus, if $x \in B(\theta_j, r_{\inner})$ then $\theta_j \in B(x, r_{\inner})$, and so $\theta_j$ must be among the $k$ closest $\theta$'s to $x$, and so satisfy that $x \in C_j$.

To finish off the proof, we observe that $\nu(C) = \frac{\vol(C)}{S_{d-1}}$ for any $C \subset \Scal^{d-1}$.
\end{proof}

\subsection{Proof of \cref{lem:empirical_mass_C_j}}

We will use the following form of Bernstein's bound.
\begin{lemma}[Bernstein bound]\label{lem:bernstein_bound}
Let $X_1,\ldots,X_n$ be i.i.d. and suppose that $|X_i|\leq c$ and $\EE (X_i)=\mu$. Pick $0<\delta<1$. Then with probability at least $1-\delta$,
\[\ \left|\frac{1}{n}\sum_{i=1}^nX_i-\mu \right|\leq \sqrt{\frac{2\sigma^2\log(1/\delta)}{n}}+\frac{2c\log(1/\delta)}{3n}\]
where, $\sigma^2=\frac{1}{n}\sum_{i=1}^n\Var(X_i)$.
\end{lemma}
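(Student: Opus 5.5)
The plan is the standard Chernoff-method proof of Bernstein's inequality, applied to the centered variables $Y_i = X_i - \mu$. These are i.i.d., have mean zero and variance $\sigma^2$, and satisfy $|Y_i| \leq |X_i| + |\mu| \leq 2c$. The factor $2c$ is exactly what produces the term $\frac{2c\log(1/\delta)}{3n}$ in the statement. I would first prove the one-sided tail bound for $\frac1n\sum_i Y_i$ and then handle the lower tail by applying the same argument to $-Y_i$.

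\textbf{Step 1 (moment generating function).} For $|Y|\leq b$ with $b = 2c$ and $\EE Y = 0$, expand the exponential and use $\EE|Y|^p \leq b^{p-2}\sigma^2$ together with $p! \geq 2\cdot 3^{p-2}$. For $0<\lambda<3/b$ this gives
\[ \EE e^{\lambda Y} \leq 1 + \frac{\lambda^2\sigma^2}{2(1-\lambda b/3)} \leq \exp\!\left(\frac{\lambda^2\sigma^2}{2(1-\lambda b/3)}\right). \]

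\textbf{Step 2 (Chernoff bound).} By independence and Markov's inequality,
\[ \pr\Big(\sum_i Y_i \geq t\Big) \leq \exp\!\left(-\lambda t + \frac{n\lambda^2\sigma^2}{2(1-\lambda b/3)}\right). \]
Choosing $\lambda = t/(n\sigma^2 + bt/3)$ yields the classical form
\[ \pr\Big(\sum_i Y_i \geq t\Big) \leq \exp\!\left(-\frac{t^2}{2(n\sigma^2 + bt/3)}\right). \]

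\textbf{Step 3 (inversion).} Set the right-hand side equal to $\delta$ and write $L=\log(1/\delta)$. This gives a quadratic in $t$,
\[ t^2 - \tfrac{2bL}{3}\,t - 2n\sigma^2 L = 0 , \]
whose positive root satisfies $t \leq \sqrt{2n\sigma^2 L} + \frac{2bL}{3}$ by $\sqrt{a+b'}\le\sqrt a+\sqrt{b'}$. A slightly sharper handling of the root gives $\frac{bL}{3}$ in place of $\frac{2bL}{3}$. Dividing by $n$ and substituting $b=2c$ gives the upper deviation bound
\[ \frac1n\sum_i Y_i \leq \sqrt{\frac{2\sigma^2 L}{n}} + \frac{2cL}{3n} \]
with probability at least $1-\delta$.

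\textbf{Step 4 (two sides).} Repeat the argument with $-Y_i$ and take a union bound over the two tails. The main subtlety, rather than a real obstacle, is this bookkeeping: a literal two-sided statement at confidence $1-\delta$ needs $\log(2/\delta)$ instead of $\log(1/\delta)$. I would either run each tail at level $\delta/2$, or note that in the only downstream use, \cref{lem:empirical_mass_C_j}, the union bound over $m$ cells already produces a $\log(m/\delta)$ factor that absorbs the extra factor of $2$. There one applies the bound with $X_i=\ind[x_i\in C_j]$, $c=1$, and $\sigma^2\le f(C_j)$.

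The inequality $\EE|Y|^p\le b^{p-2}\sigma^2$ and the series estimate in Step 1 are the only places needing care. Everything else is algebra.
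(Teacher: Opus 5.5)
The paper does not prove this lemma; it quotes it as a standard form of Bernstein's inequality. Your Chernoff route is the natural one, and Steps 1 and 2 are correct. The gap is in Step 3: there is no ``slightly sharper handling of the root.'' The positive root of your quadratic is $t^\ast = \frac{bL}{3} + \sqrt{\frac{b^2L^2}{9} + 2n\sigma^2 L}$, which is strictly larger than $\sqrt{2n\sigma^2 L} + \frac{bL}{3}$. Since $t \mapsto t^2/(2(n\sigma^2 + bt/3))$ is increasing, your classical tail bound evaluated at $t = \sqrt{2n\sigma^2 L} + \frac{bL}{3}$ is strictly larger than $\delta$. So with $b = 2c$ your argument only gives the linear term $\frac{4cL}{3n}$, twice the stated one.

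The loss happens in Step 2, because $\lambda = t/(n\sigma^2 + bt/3)$ is not the optimal choice. To recover $\frac{bL}{3}$, keep the MGF bound from Step 1 and write $v = n\sigma^2$, $a = b/3$. Target $t = \sqrt{2vL} + aL$ and take $\lambda = \frac{s}{a(1+s)}$ with $s = a\sqrt{2L/v}$. Then $\lambda < 1/a$, $1 - a\lambda = 1/(1+s)$, and $t = \frac{v}{a}(s + s^2/2)$. The Chernoff exponent $-\lambda t + \frac{v\lambda^2}{2(1-a\lambda)}$ then equals $-\frac{v s^2}{2a^2} = -L$ exactly; this is the sub-gamma form of Bernstein's inequality. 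Alternatively, if $X_i \in [0,c]$, as for the indicators in \cref{lem:empirical_mass_C_j}, then $|X_i - \mu| \le c$. In that case $b = c$ suffices and your crude inversion already gives $\frac{2cL}{3n}$.

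On the two-sided issue, you are right, and it is more than bookkeeping: the lemma as literally stated is false. Take $n = 1$ and $X_1$ Rademacher, so $c = 1$, $\mu = 0$, $\sigma^2 = 1$, and take $\delta = 0.9$. The right-hand side is $\sqrt{2\log(10/9)} + \frac{2}{3}\log(10/9) \approx 0.53$. But $|X_1 - \mu| = 1$ always, so the inequality fails with probability $1 > \delta$.

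No argument can therefore give the two-sided bound with $\log(1/\delta)$. What you can prove is each one-sided bound at level $\delta$, or the two-sided bound with $\log(2/\delta)$ by running each tail at $\delta/2$. State that corrected version outright rather than appealing to absorption downstream. In \cref{lem:empirical_mass_C_j} the union bound then produces $\log(2m/\delta)$ instead of $\log(m/\delta)$. That changes constants there but not any rates.
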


\begin{proof}[Proof of \cref{lem:empirical_mass_C_j}]
Fix any $j\in [m]$ and let  $z_i=\ind[x_i\in C_j]$. Then $|z_i|\leq 1$ and
\begin{align*}
\Var(z_i) &= \EE(z_i^2)-(\EE z_i)^2=f(C_j)-(f(C_j))^2 \\ 
& =f(C_j)(1-f(C_j)\leq f(C_j).
\end{align*}
Combining \cref{lem:bernstein_bound} with a union bound, we can say that with probability at least $1-\delta$,
\begin{align*}
\left| f_n(C_j) - f(C_j) \right| & \leq \sqrt{\frac{2{f(C_j)\log(m/\delta)}}{n}}+\frac{2\log(m/\delta)}{3n}.
\end{align*}
\end{proof}

\subsection{Proof of \cref{lem:relative_density_bound}}

\begin{proof}
Pick any $x\in \Scal^{d-1}$ and assume that $x\in C_j$ for some $j\in \{1,\ldots,m\}$. Since $\diam(C_j)\leq \rhat(\epsilon,x)$, we have $C_j\subset B(x,\rhat(\epsilon,x))$. This implies $f(C_j)\leq \vol(C_j)(f(x)+\epsilon)$. Next, note that, with probability at least $1-\delta$, \eqref{eqn:volume-approximation} holds. Conditioning on this occurring, and for the choice of $k$ as given in the Lemma statement, we have that for every $j=1,\ldots, m$, 
    \[f(C_j)\leq \|f\|_{\infty}\vol(C_j)\leq\frac{2kS_{d-1}\|f\|_{\infty}}{m}\]
    Using this bound, Lemma \ref{lem:empirical_mass_C_j} ensures that with probability at least $1-\delta$,
    \[|f_n(C_j)-f(C_j)|\leq 2\sqrt{\frac{kS_{d-1}\|f\|_{\infty}\log (m/\delta)}{mn}}+\frac{2\log (m/\delta)}{3n} = \alpha_n\]
    We condition on this occurring.
    We first show the upper bound.

    \begin{align*}
        \fhat_n(x) &= \frac{m}{k^2S_{d-1}}\sum_{j:h_j(x)=1}f_n(C_j) \leq \frac{1}{k}\sum_{j:h_j(x)=1}\left(\frac{f(C_j)}{S_{d-1}k/m} + \frac{\alpha_n}{S_{d-1}k/m}\right) \\
        & \leq \frac{1}{k}\sum_{j:h_j(x)=1}\left(\frac{f(C_j)}{\vol(C_j)}\left(1 + C_{\delta}\sqrt{\frac{d\log m}{k}}\right) + \frac{\alpha_n}{S_{d-1}k/m}\right) \\
        & \leq \frac{1}{k}\sum_{j:h_j(x)=1}\left(\frac{\vol(C_j)(f(x)+\epsilon)}{\vol(C_j)}\left(1 + C_{\delta}\sqrt{\frac{d\log m}{k}}\right) + \frac{\alpha_n}{S_{d-1}k/m}\right) \\
        &= \left(1+C_{\delta}\sqrt{\frac{d\log m}{k}}\right)(f(x)+\epsilon)+\gamma_n
    \end{align*}

The lower bound is shown in a similar manner. 
Pick any $x\in \Scal^{d-1}$ and assume that $x\in C_j$ for some $j\in \{1,\ldots,m\}$. Since $\diam(C_j)\leq \rcheck(\epsilon,x)$, we have $C_j\subset B(x,\rcheck(\epsilon,x))$. 

This implies $f(C_j)\geq \vol(C_j)(f(x)-\epsilon)$. Then we have, 
    
        \begin{align*}
        \fhat_n(x) &= \frac{m}{k^2S_{d-1}}\sum_{j:h_j(x)=1}f_n(C_j) \geq \frac{1}{k}\sum_{j:h_j(x)=1}\left(\frac{f(C_j)}{S_{d-1}k/m} - \frac{\alpha_n}{S_{d-1}k/m}\right) \\
        & \geq \frac{1}{k}\sum_{j:h_j(x)=1}\left(\frac{f(C_j)}{\vol(C_j)}\left(1 - C_{\delta}\sqrt{\frac{d\log m}{k}}\right) - \frac{\alpha_n}{S_{d-1}k/m}\right) \\
        & \geq \frac{1}{k}\sum_{j:h_j(x)=1}\left(\frac{\vol(C_j)(f(x)-\epsilon)}{\vol(C_j)}\left(1 - C_{\delta}\sqrt{\frac{d\log m}{k}}\right) - \frac{\alpha_n}{S_{d-1}k/m}\right) \\
        &= \left(1 - C_{\delta}\sqrt{\frac{d\log m}{k}}\right)(f(x)-\epsilon)-\gamma_n. \qedhere
        \end{align*} 
\end{proof}

\subsection{Proof of \cref{lem:general-diameter-bound}}

We will make use of the following result.

\begin{lemma}[Lemma~12~\cite{dasgupta2020expressivity}]
\label{lem:mass-to-radius}
Suppose $\nu$ is uniform over $\Scal^{d-1}$, $\theta \in \Scal^{d-1}$, and $r \in (0,1)$. Then
\[ r \leq \frac{2}{\sqrt{3}}\left( 3 \sqrt{d} \nu(B(x,r)) \right)^{1/(d-1)}. \]
\end{lemma}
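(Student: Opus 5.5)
The plan is to lower-bound the $\nu$-mass of a spherical cap in terms of its Euclidean radius and then invert the bound. Fix $x\in\Scal^{d-1}$ and $r\in(0,1)$. Let $\varphi_0$ be the angular radius of $B(x,r)$, so that $r = 2\sin(\varphi_0/2)$. Since $r<1$, we have $\varphi_0 < \pi/3$. Because $\nu$ is uniform, $\nu(B(x,r)) = \vol(B(x,r))/S_{d-1}$, and it suffices to show
\[ \vol(B(x,r)) \geq \frac{S_{d-1}}{3\sqrt d}\left(\frac{\sqrt3}{2}\, r\right)^{d-1}, \]
because raising both sides to the power $1/(d-1)$ and rearranging gives exactly the claimed inequality.

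\textbf{Step 1 (cap versus flat disk).} Orthogonally project the cap $B(x,r)$ onto the tangent hyperplane at $x$ (equivalently, onto $x^\perp$). The image is a $(d-1)$-dimensional Euclidean ball of radius $\rho=\sin\varphi_0$. Orthogonal projection is $1$-Lipschitz, so it does not increase $(d-1)$-dimensional volume. Hence $\vol(B(x,r)) \geq V_{d-1}\sin^{d-1}\varphi_0$, where $V_{d-1} = \pi^{(d-1)/2}/\Gamma\left(\frac{d+1}{2}\right)$ is the volume of the unit ball in $\RR^{d-1}$. Next,
\[ \sin\varphi_0 = 2\sin(\varphi_0/2)\cos(\varphi_0/2) = r\cos(\varphi_0/2), \]
and $\cos(\varphi_0/2) \geq \cos(\pi/6) = \sqrt3/2$. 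Therefore $\vol(B(x,r)) \geq V_{d-1}\left(\frac{\sqrt3}{2}r\right)^{d-1}$.

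\textbf{Step 2 (ratio of Gamma functions).} It remains to show $V_{d-1}/S_{d-1} \geq 1/(3\sqrt d)$. Using $S_{d-1}=2\pi^{d/2}/\Gamma(d/2)$, the ratio is
\[ \frac{V_{d-1}}{S_{d-1}} = \frac{\Gamma(d/2)}{2\sqrt\pi\,\Gamma\left(\frac{d+1}{2}\right)}. \]
Apply Gautschi's inequality $\Gamma(y+1)/\Gamma(y+\tfrac12) < \sqrt{y+1}$ with $y=(d-1)/2$. This gives $\Gamma\left(\frac{d+1}{2}\right)/\Gamma(d/2) < \sqrt{(d+1)/2}$, so the ratio is at least $1/\sqrt{2\pi(d+1)}$. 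That is at least $1/(3\sqrt d)$ whenever $9d \geq 2\pi(d+1)$, i.e. for all $d\geq 3$. For $d=2$, check directly: $V_1/S_1 = 2/(2\pi) = 1/\pi > 1/(3\sqrt2)$.

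The only delicate point is this dimension-dependent Gamma-function estimate in Step 2, and in particular making sure the constant $3\sqrt d$ covers small $d$. Gautschi's inequality handles $d\geq 3$ and the direct check handles $d=2$. The geometric comparison in Step 1 is routine once the $1$-Lipschitz projection observation is made.
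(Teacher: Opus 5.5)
Your proof is correct, but it takes a different route from the one the paper relies on.

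The paper imports this lemma from \cite{dasgupta2020expressivity} without proof. The argument there is probabilistic, and the paper mirrors it in its proof of \cref{lem:spherical_cap_volume_upper_bound}. It writes a uniform point as a normalized Gaussian vector and notes that $\theta_1^2\sim\mathrm{Beta}(1/2,(d-1)/2)$ and that the cap is $\{\theta_1\ge 1-r^2/2\}$. Hence $\nu(B(x,r))=\tfrac12\Pr(\theta_1^2\ge 1-\epsilon)$ with $\epsilon=r^2(1-r^2/4)\ge \tfrac34 r^2$. It then applies the lower half of \cref{lem:beta-mass-bound} and bounds a Gamma ratio by $1/(3\sqrt d)$.

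Your projection argument yields exactly the same intermediate bound:
\begin{itemize}
\item $\sin^2\varphi_0=1-(1-r^2/2)^2=\epsilon$;
\item the constant $\frac{1}{(d-1)B(1/2,(d-1)/2)}$ equals $\frac{\Gamma(d/2)}{2\sqrt\pi\,\Gamma((d+1)/2)}=V_{d-1}/S_{d-1}$;
\item the step $z^{-1/2}\ge 1$ that gives the lower half of the Beta tail bound is precisely your discarding of the area-distortion factor $1/\cos\varphi$ of the projection.
\end{itemize}

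Your version is more self-contained. It needs only that $1$-Lipschitz maps do not increase $(d-1)$-dimensional measure, plus the volume of a Euclidean ball. Your Gamma step is also correct, including the separate check at $d=2$. That check is genuinely necessary, since Gautschi's inequality alone gives $1/\sqrt{6\pi}<1/(3\sqrt2)$ there.

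The Beta route's advantage is that \cref{lem:beta-mass-bound} is two-sided. The same computation therefore also gives the matching cap upper bound in \cref{lem:spherical_cap_volume_upper_bound}. Your Lipschitz projection gives only the lower bound; an upper bound would require controlling the $1/\cos\varphi$ factor separately.
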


\begin{proof}[Proof of \cref{lem:general-diameter-bound}]
For any $C_j$, observe that by the conclusion of \cref{lem:inner-outer-balls}, we have $C_j \subset B(\theta_j, r_{\out})$, where 
\[ \nu(B(\theta_j, r_{\out})) \leq \frac{k}{m} \left( 1 + C_\delta \sqrt{\frac{d \log m}{k}}\right) \leq \frac{2k}{m}, \]
where we have used our lower bound on $k$. Thus, by \cref{lem:mass-to-radius},
\begin{align*}
\diam(C_j) &\leq 2 r_{\out} \\
&\leq \frac{4}{\sqrt{3}} \left(3\sqrt{d} \nu(B(x, r_{\out})\right)^{1/(d-1)} \\
&\leq \frac{4}{\sqrt{3}} \left(\frac{6 \sqrt{d} k}{m}\right)^{1/(d-1)}. \qedhere
\end{align*}
\end{proof}

\subsection{Proof of \cref{thm:smooth-density-bound}}
\begin{proof}
First observe that with probability at least $1-2\delta$, the conclusions of \cref{lem:inner-outer-balls} and \cref{lem:empirical_mass_C_j} hold. Assume that this is so, in which case \cref{lem:relative_density_bound} and \cref{lem:general-diameter-bound} both hold as well. 

We will select $\epsilon = \sqrt{\frac{1}{k}}$. Now observe that by the condition of smoothness, if we have $\epsilon \leq L r^\beta$, then 
\[ \rhat(\epsilon,x), \rcheck(\epsilon, x) \geq \left( \frac{\epsilon}{L}\right)^{1/\beta} \]
for all $x \in \Scal^{d-1}$.
Our choice of $\epsilon, k$ ensures the above does hold. Applying \cref{lem:general-diameter-bound}, we have
\begin{align*}
\diam(C_j) &\leq \frac{4}{\sqrt{3}} \left( \frac{6 \sqrt{d} k}{m} \right)^{1/(d-1)} \leq L^{-1/\beta} k^{-1/2\beta} = \left(\frac{\epsilon}{L}\right)^{\beta},
\end{align*}
where the second inequality follows from our lower bound on $m$ and canceling terms and the final equality follows from our choice $\epsilon$. Thus, the conditions of \cref{lem:relative_density_bound} are met, and we have that for any $x \in \Scal^{d-1}$,
\begin{align*}
|f(x) - \hat{f}_n(x)| 
&\leq f(x) C_\delta \sqrt{\frac{d \log m}{k}} + \epsilon + \epsilon C_\delta \sqrt{\frac{d \log m}{k}} + \gamma_n \\
&\leq \left( \|f\|_\infty C_\delta \sqrt{d \log m} + 2 \right) k^{-1/2} + \gamma_n,
\end{align*}

where we have used our choice of $\epsilon = \sqrt{1/k}$ and our lower bound of $k \geq C_\delta^2 d \log m$.
\end{proof}


Instantiating \cref{thm:smooth-density-bound} with $m = \Theta(n)$ and $k = \Theta\left( m^{\frac{2\beta}{2\beta+(d-1)}}\left(\log m\right)^{\frac{d-1}{2\beta+(d-1)}}\right)$ leads to $\sup_{x\in\Scal^{d-1}}|f(x)-\fhat_n(x)| = \|f-\fhat_n\|_{\infty} = O\left(\left(\frac{\log n}{n}\right)^{\frac{\beta}{2\beta+(d-1)}}\right)$. 

\subsection{Proof of \cref{thm:density_lower_bound}}
\label{sec:lower_bound}
\begin{proof}
The high level idea of our proof is to first find a packing of the unit sphere, then construct a finite set of smooth functions, using such packing, with non-overlapping support. Finally, construct a finite family of $L,\beta$ smooth densities using the smooth functions constructed before for which we can use Fano's minimax bound (Theorem \ref{th:fanos_inequality}). Now we provide the details. For any positive integer $k$, we use the notation $[k]$ to denote the set $\{1,\ldots,k\}$.

Fix any $a\in \Scal^{d-1}$ and let $k : \Scal^{d-1}\rightarrow \RR$ be a bounded function defined on $B(a,1/2)\cap \Scal^{d-1}$ such that, \[\int_{x\in B(a,1/2)\cap \Scal^{d-1}}k(x)dx = 0, ~\forall x,x'\in\Scal^{d-1}, |k(x)-k(x')|\leq \|x-x'\|.\]
Note that, based on our function definition, $k(x)=0$ for any $x \not\in B(a,1/2)\cap \Scal^{d-1}$. Also, let $k_{\max} = \max_{x\in\Scal^{d-1}}|k(x)|$.

For any $h>0$, using Lemma \ref{lem:packing_number_upper_bound}, let $m =\frac{\sqrt{d}/2}{h^{d-1}}$ be the packing number of $\Scal^{d-1}$. Then there exists $S = \{x_1,\ldots,x_m\}\subset \Scal^{d-1}$ such that, $\|x_i-x_j\|>h$ and $C_i\cap C_j = \emptyset$ for $i\neq j$, where $C_i = B(x_i,h/2)\cap\Scal^{d-1}$ for $i=1,\ldots,m$. For any such $x_j\in S$, define
\[g_j(x) = h^{\beta}k\left(\frac{x-x_j}{h}+a\right).\]

Let $y = \frac{x-x_j}{h}+a$. Then $dy = \left(\frac{1}{h}\right)^ddx$.
It is easy to check that $g_j$ has support in $C_i$ and therefore,
\begin{align*}
    \int_{\Scal^{d-1}}g_j(x)dx &= h^{\beta}\int_{\Scal^{d-1}}k\left(\frac{x-x_j}{h}+a\right)dx\\
    &=h^{\beta+d}\left(\int_{y\in B(a,1/2)\cap\Scal^{d-1}}k(y)dy + \int_{y\notin B(a,1/2)\cap\Scal^{d-1}}k(y)dy\right) =0
\end{align*}

Thus, $g_j$s have non-overlapping support and each $g_j$ is 1-Lipschitz. To see this, observe that for $\forall x,x'\in\Scal^{d-1}$,
\[|g_j(x)-g_j(x')|=h^{\beta}\left| k\left(\frac{x-x_j}{h}+a\right)-k\left(\frac{x'-x_j}{h}+a\right) \right|\leq h^{\beta}\left(\frac{\|x-x'\|}{h}\right)^{\beta} = \|x-x'\|^{\beta}\]
Next, we define the function class, where $S_{d-1} = \vol(\Scal^{d-1})$.
\begin{equation}\label{eqn:finite_density_class}
    \Mcal = \left\{f_j(x) = \frac{1}{S_{d-1}}+\frac{g_j(x)}{2S_{d-1}k_{\max}}: j= 1,\ldots,m \right\}
\end{equation}
For $h<1$, it is easy to see that $f_j(x)>0$ for all $x\in\Scal^{d-1}$ and $j=1,\ldots,m$. Also,
\[\int_{\Scal^{d-1}}f_j(x)dx = \frac{1}{S_{d-1}}\int_{\Scal^{d-1}}dx+\frac{1}{2S_{d-1}k_{\max}}\int_{\Scal^{d-1}}g_j(x)dx=1\]
Therefore, each $f_i\in\Mcal$ is a probability density function whose support is $\Scal^{d-1}$. It is easy to verify that that each $f_i\in \mathcal{M}$ is $\frac{1}{S_{d-1}k_{\max}},\beta$ smooth. For each $f_i\in\mathcal{M}$ we denote by $P_i$ its associated probability distribution.

In order to apply Fano's inequality, we need to compute, (i) an upper bound of $\max_{i\neq j} \kl(P_i,P_j)$, and (ii) a lower bound of $\min_{i\neq j}\|f_i-f_j\|_{\infty}$. Let's start with the \kl divergence first. 
Note that for any $x\in C_i$ and $i\in [m]$,
\begin{align}\label{eqn:densities_on_C_i}
    f_i(x) = \frac{1}{S_{d-1}}+\frac{g_i(x)}{2S_{d-1}k_{\max}}, f_j(x) = \frac{1}{S_{d-1}}
\end{align}
Note that for $h\leq1$, for any $i\in [m]$, $f_i(x)\geq \frac{1}{S_{d-1}}-\frac{k_{\max}}{2S_{d-1}k_{\max}}=\frac{1}{2S_{d-1}}$. Using this lower bound, we have,
\begin{align}\label{eqn:KL_bound}
    \kl(P_i,P_j) & \leq \chi^2(f_i,f_j)=\int_{\Scal^{d-1}}\left(\frac{f_i(x)}{f_j(x)}-1\right)^2f_i(x)dx = \int_{S^{d-1}}\frac{(f_i(x)-f_j(x))^2}{f_j(x)}dx \nonumber \\
    & \leq 2S_{d-1}\int_{S^{d-1}}(f_i(x)-f_j(x))^2dx \nonumber \\
    & = 2S_{d-1}\left(\int_{C_i\cup C_j}(f_i(x)-f_j(x))^2dx + \int_{\Scal^{d-1}\setminus\{C_i\cup C_j\}}(f_i(x)-f_j(x))^2dx\right) \nonumber \\
    & \stackrel{a}{=} 2S_{d-1}\int_{C_i\cup C_i}(f_i(x)-f_j(x))^2dx \nonumber \\
    &= 2S_{d-1}\left(\int_{C_i}(f_i(x)-f_j(x))^2dx + \int_{C_j}(f_i(x)-f_j(x))^2dx\right) \nonumber \\
    &\stackrel{b}{\leq} 2S_{d-1}\left(\int_{C_i}\left(\frac{h^{\beta}k_{\max}}{2S_{d-1}k_{\max}}\right)^2dx + \int_{C_j}\left(\frac{h^{\beta}k_{\max}}{2S_{d-1}k_{\max}}\right)^2dx\right)\nonumber \\
    & = \left(\frac{h^{2\beta}}{2S_{d-1}}\right)\left(\int_{C_i}dx + \int_{C_j}dx\right)= \left(\frac{h^{2\beta}}{2S_{d-1}}\right)\left(\vol(C_i)+\vol(C_j)\right) \nonumber \\
    &\stackrel{c}{\leq} \left(\frac{h^{2\beta}}{S_{d-1}}\right)\frac{2S_{d-1}}{\sqrt{d}}h^{d-1} \nonumber \\
    &= \left(\frac{2}{\sqrt{d}}\right)h^{2\beta+(d-1)}.
\end{align}
where, equality $a$ follows from the fact that outside any $C_i\cup C_j$, $f_i$ and $f_j$ are identical. Inequality $b$ follows from \eqref{eqn:densities_on_C_i} and using the maximum possible value of $g_i$. Inequality $c$ follows from Lemma \ref{lem:spherical_cap_volume_upper_bound}.

Now, for $i\neq j$, we have, 
\begin{align}\label{eqn:ell_infinity_lower_bound}
    \|f_i-f_j\|_{\infty} & = \sup_{x\in \Scal^{d-1}} |f_i(x)-f_j(x)| = \sup_{x\in C_i\cup C_j}|f_i(x)-f_j(x)| \nonumber \\
    & \stackrel{a}{=} \sup_{x\in C_i\cup C_j}\left | \frac{g(x)}{2S_{d-1}k_{\max}}\right | =\frac{h^{\beta}}{2S_{d-1}}
\end{align}
where, equality $a$ follows from \eqref{eqn:densities_on_C_i}. 

Now set $h = \kappa\left(\frac{\log n}{n}\right)^{\frac{1}{2\beta+(d-1)}}$ for some $\kappa \leq \min\left\{\frac{1}{16},\left(\frac{\sqrt{d}}{16\left(1+\frac{2\beta}{d-1}\right)}\right)^{\frac{1}{2\beta+(d-1)}}\right\}$. Then it follows from Lemma \ref{lem:kappa_bound} that $\frac{n\max_{i\neq j}\kl(P_i,P_j)+\log 2}{\log m}\leq \frac{1}{2}$. The desired lower bound immediately follows from an application of Theorem \ref{th:fanos_inequality}.
\end{proof}

\begin{lemma}\label{lem:kappa_bound}
    Suppose $d>4$, $n\geq 16$,  and $\kappa \leq \min\left\{\frac{1}{16},\left(\frac{\sqrt{d}}{16\left(1+\frac{2\beta}{d-1}\right)}\right)^{\frac{1}{2\beta+(d-1)}}\right\}$. Set $h = \kappa\left(\frac{\log n}{n}\right)^{\frac{1}{2\beta+(d-1)}}$ and $m = \frac{\sqrt{d}/2}{h^{d-1}}$. Then the following holds.
    \[\frac{n\max_{i\neq j}\kl(P_i,P_j)+\log 2}{\log m}\leq \frac{1}{2}.\]
\end{lemma}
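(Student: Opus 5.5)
We bound numerator and denominator separately, using the KL estimate \eqref{eqn:KL_bound}, $\max_{i\neq j}\kl(P_i,P_j)\le \frac{2}{\sqrt d}h^{2\beta+(d-1)}$. Plugging in $h=\kappa(\log n/n)^{1/(2\beta+(d-1))}$ gives
\[ n\max_{i\neq j}\kl(P_i,P_j)\le \frac{2}{\sqrt d}\,\kappa^{2\beta+(d-1)}\log n. \]
For the denominator,
\[ \log m=\log\frac{\sqrt d}{2}-(d-1)\log\kappa+\frac{d-1}{2\beta+(d-1)}\log\frac{n}{\log n}. \]
Since $d>4$, $\log(\sqrt d/2)>0$, and since $\kappa\le 1/16$, $-(d-1)\log\kappa\ge (d-1)\log 16$. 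Both terms are nonnegative, so we drop or keep them as convenient.

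\textbf{Controlling $\log 2$.} We use $\log m\ge (d-1)\log 16\ge 4\log 16$, so that $\log 2/\log m\le 1/16$. This is comfortably below what we need.

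\textbf{Controlling the KL term.} We need a lower bound on $\log m$ proportional to $\log n$. Since $n\ge 16$, we have $\log\log n\le \frac12\log n$ (indeed $\log n\le\sqrt n$ always), hence $\log(n/\log n)\ge\frac12\log n$. Therefore
\[ \log m\ge \frac{d-1}{2(2\beta+d-1)}\log n=\frac{\log n}{2(1+2\beta/(d-1))}. \]
Combining with the hypothesis $\kappa^{2\beta+(d-1)}\le \frac{\sqrt d}{16(1+2\beta/(d-1))}$ gives
\[ \frac{n\max\kl}{\log m}\le \frac{2}{\sqrt d}\cdot\frac{\sqrt d}{16(1+2\beta/(d-1))}\cdot 2\left(1+\frac{2\beta}{d-1}\right)=\frac14. \]
The total is then at most $\frac14+\frac1{16}<\frac12$.

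\textbf{Main obstacle.} The delicate step is the lower bound $\log m\gtrsim \frac{d-1}{2\beta+d-1}\log n$, which rests on $\log(n/\log n)\ge \frac12\log n$ for $n\ge16$. The constant $16$ in the hypothesis on $\kappa$ appears tuned to absorb exactly this factor of $2$, so the plan is to verify the inequality cleanly rather than rely on asymptotics. A secondary point is that $m$ need not be an integer; we treat $m$ as given by the formula, or take its floor, which changes $\log m$ by a negligible amount given the $(d-1)\log16$ slack.
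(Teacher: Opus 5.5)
Your proof is correct and follows the paper's argument: split the ratio into the KL piece, which you bound against the $\frac{d-1}{2\beta+(d-1)}\log(n/\log n)$ term using $\log\log n\le\frac12\log n$ and the hypothesis on $\kappa^{2\beta+(d-1)}$ to get $\frac14$, and the $\log 2$ piece, which you bound against the $(d-1)\log(1/\kappa)$ term. The only difference is that you use $d-1\ge 4$ to bound the second piece by $\frac1{16}$, whereas the paper (whose last line is garbled) only uses $d-1\ge1$ and gets $\log 2/\log 16=\frac14$, which still sums to $\frac12$.
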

\begin{proof}
    Since $h = \kappa\left(\frac{\log n}{n}\right)^{\frac{1}{2\beta+(d-1)}}$, we have,
    \[\max_{i\neq j}\kl(P_i,P_j)  \leq \frac{2}{\sqrt{d}}h^{2\beta+(d-1)} = \frac{2\kappa^{2\beta+(d-1)}}{\sqrt{d}}\left(\frac{\log n}{n}\right).\]
    Also we have,
    \begin{align*}
        \log m &= \log\left(\frac{\sqrt{d}/2}{h^{d-1}}\right) = \log\left(\left(\frac{\sqrt{d}}{2}\right)\left(\frac{1}{\kappa}\right)^{d-1}\left(\frac{n}{\log n}\right)^{\frac{d-1}{2\beta+(d-1)}}\right) \\
        &= \frac{1}{2}\log\left(\frac{d}{4}\right)+(d-1)\log\left(\frac{1}{\kappa}\right)+ \frac{(d-1)}{2\beta+(d-1)}\left(\log n - \log\log n\right)
    \end{align*}

    Therefore,
    \begin{align*}
        \frac{n\max_{i\neq j}\kl(P_i,P_j)+\log 2}{\log m} &\leq \frac{\frac{2}{\sqrt{d}}\kappa^{2\beta+(d-1)}\log n +\log 2}{\frac{1}{2}\log\left(\frac{d}{4}\right)+(d-1)\log\left(\frac{1}{\kappa}\right)+ \frac{(d-1)}{2\beta+(d-1)}\left(\log n - \log\log n\right)} \\
        &\leq \frac{\frac{2}{\sqrt{d}}\kappa^{2\beta+(d-1)}\log n}{\frac{(d-1)}{2\beta+(d-1)}\left(\log n - \log\log n\right)} + \frac{\log 2}{(d-1)\log\left(\frac{1}{\kappa}\right)} \\
        & =  \frac{2}{\sqrt{d}}\left(1+\frac{2\beta}{d-1}\right)\kappa^{2\beta+(d-1)} \frac{1}{\left(1-\frac{\log\log n}{\log n}\right)}+ \frac{\log 2}{(d-1)\log\left(\frac{1}{\kappa}\right)} \\
        & \leq  \frac{4}{\sqrt{d}}\left(1+\frac{2\beta}{d-1}\right)\kappa^{2\beta+(d-1)}  + \frac{\log 2}{(d-1)\log\left(\frac{1}{\kappa}\right)} \\
        &\leq \frac{1}{4} + \frac{\log 2}{(d-1)\log\left(\frac{1}{\kappa}\right)} \\
        & \leq \frac{1}{4} + \frac{\log 2}{\log\left(\frac{1}{\kappa}\right)} \leq \frac{1}{4} = \frac{1}{4} = \frac{1}{2}.
    \end{align*}
\end{proof}

\begin{theorem}[Fano's minimax bound \citep{tsybakov:2008:nonparametric}]
\label{th:fanos_inequality}
    Let $\mathcal{P}$ be a set of distributions and let $x_1,\ldots,x_n$ be a sample from some distribution $P\in\mathcal{P}$. Let $\hat{\theta}=\hat{\theta}(x_1,\ldots,x_n)$ is an estimator of the parameter $\theta(P)$ taking values in a metric space with metric $d$. Let $F = \{P_1,\ldots,P_N\}\subset \mathcal{P}$. Then,
    \[\inf_{\hat{\theta}}\sup_{P\in\mathcal{P}}\E_P\left(d\left(\hat{\theta}-\theta(P)\right)\right)\geq \frac{s}{2}\left(1-\frac{n\alpha+\log 2}{\log N}\right)\]
where $s= \min_{i\neq j} d\left(\theta(P_i), \theta(P_j)\right)$, and $\alpha = \max_{i\neq j}\kl(P_i,P_j)$.
\end{theorem}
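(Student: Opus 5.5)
The proof goes by the standard reduction from estimation to multiple hypothesis testing, followed by Fano's inequality for the testing problem. Throughout, $d(\hat\theta-\theta(P))$ is read as $d(\hat\theta,\theta(P))$, and $\theta_j \triangleq \theta(P_j)$.

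\emph{Step 1 (reduction to testing).} Fix any estimator $\hat\theta$ and define the test $\psi \triangleq \argmin_{j\in[N]} d(\hat\theta,\theta_j)$, with ties broken arbitrarily. Suppose $\psi\neq j$ under $P_j$. The triangle inequality gives $s\le d(\theta_\psi,\theta_j)\le d(\hat\theta,\theta_\psi)+d(\hat\theta,\theta_j)\le 2\,d(\hat\theta,\theta_j)$. Hence $\{\psi\neq j\}\subset\{d(\hat\theta,\theta_j)\ge s/2\}$. By Markov's inequality, and then bounding a maximum by an average,
\[\sup_{P\in\mathcal{P}}\E_P\, d(\hat\theta,\theta(P))\ \ge\ \max_j \frac{s}{2}P_j^{n}(\psi\neq j)\ \ge\ \frac{s}{2}\cdot\frac1N\sum_{j=1}^N P_j^{n}(\psi\neq j).\]
This holds for every $\hat\theta$, so it suffices to lower bound the average testing error by $1-\frac{n\alpha+\log 2}{\log N}$ for an arbitrary test $\psi$.

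\emph{Step 2 (Fano's inequality).} Let $J$ be uniform on $[N]$ and, given $J=j$, let $X=(x_1,\dots,x_n)\sim P_j^{n}$. Set $p_e \triangleq \Pr(\psi(X)\ne J)$, which equals the average error in Step 1. Let $E=\ind[\psi\neq J]$. Expanding $H(J\mid\psi)$ through $E$ gives the classical Fano bound $H(J\mid\psi)\le H(E)+p_e\log(N-1)\le \log 2+p_e\log N$. Since $H(J)=\log N$, the data-processing inequality $I(J;\psi)\le I(J;X)$ yields
\[p_e\ \ge\ 1-\frac{I(J;X)+\log 2}{\log N}.\]

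\emph{Step 3 (bounding the mutual information).} Write $\bar P=\frac1N\sum_i P_i^{n}$, so that $I(J;X)=\frac1N\sum_j \kl(P_j^{n},\bar P)$. Convexity of $\kl$ in its second argument gives $\kl(P_j^{n},\bar P)\le \frac1N\sum_i \kl(P_j^{n},P_i^{n})$. Tensorization over i.i.d.\ samples gives $\kl(P_j^{n},P_i^{n})=n\,\kl(P_j,P_i)\le n\alpha$ when $i\neq j$, and the term is $0$ when $i=j$. Therefore $I(J;X)\le n\alpha$. Combining this with Steps 1 and 2 gives exactly the claimed bound, uniformly over $\hat\theta$.

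The main technical point is Step 2, the entropy form of Fano's inequality. It requires the chain-rule manipulation $H(J\mid\psi)=H(E\mid\psi)+H(J\mid E,\psi)-H(E\mid J,\psi)$, where $H(E\mid J,\psi)=0$ and $H(J\mid E=1,\psi)\le\log(N-1)$. It also uses the data-processing step, which holds because $\psi$ is a (possibly randomized) function of $X$. The remaining steps are routine.
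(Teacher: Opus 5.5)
Your proof is correct. The paper gives no proof of its own here; it imports the bound from Tsybakov, and your three steps are exactly the standard derivation behind that result: the reduction to an $N$-ary test via the nearest-$\theta_j$ rule and Markov's inequality, Fano's inequality $p_e \ge 1-\frac{I(J;X)+\log 2}{\log N}$, and $I(J;X)\le n\alpha$ via convexity of $\kl$ in its second argument plus tensorization. The only hypotheses you should state explicitly, since your argument relies on them and the statement leaves them implicit, are $N\ge 2$ (so that $\log N>0$) and that $x_1,\ldots,x_n$ are i.i.d., which is what justifies $\kl(P_j^n,P_i^n)=n\,\kl(P_j,P_i)$.
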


\begin{lemma}\label{lem:packing_number_upper_bound}
    Pick any $h\in (0,1)$. Let $N$ be the largest set of $h$-separated points on $\Scal^{d-1}$ using $\ell_2$ norm, that is, there exists $x_1,\ldots,x_N\in\Scal^{d-1}$ such that for any $i\neq j, ~ \|x_i-x_j\|>h$. Then, $N\leq \frac{(\sqrt{d}/2)}{h^{d-1}}$.
\end{lemma}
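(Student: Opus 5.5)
The plan is the standard volume (packing) argument, which I will follow as far as it goes. The following estimate shows that the stated constant cannot be attained as written.

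\emph{Intended argument.} Let $x_1,\ldots,x_N$ be $h$-separated on $\Scal^{d-1}$. Since $\|x_i-x_j\|>h$, the caps $B(x_i,h/2)$ are pairwise disjoint subsets of $\Scal^{d-1}$. Hence
\[ N\cdot \min_i \vol(B(x_i,h/2)) \le S_{d-1}. \]
It then suffices to prove a lower bound on the normalized cap area of the form
\[ \nu(B(x,h/2)) \ge \frac{2}{\sqrt d}\,h^{d-1}. \]
I would derive this from the exact cap formula. A cap of chordal radius $\rho$ has area at least the volume of its projection onto the tangent hyperplane, which is a $(d-1)$-ball of radius $\rho\sqrt{1-\rho^2/4}$. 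Combined with $S_{d-1}=2\pi^{d/2}/\Gamma(d/2)$ and a Gautschi-type bound on $\Gamma(d/2)/\Gamma((d+1)/2)$, this is where all the work lies. \cref{lem:mass-to-radius} gives only the version of this with an exponential-in-$d$ loss.

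\emph{Main obstacle: the required cap bound is false, and so is the claimed inequality.} The lower bound above cannot hold, because the normalized area of a cap of radius $\rho$ is asymptotically $\rho^{d-1}/\sqrt{2\pi d}$. At $\rho=h/2$ this equals $(h/2)^{d-1}/\sqrt{2\pi d}$, which is far smaller than $2h^{d-1}/\sqrt d$.

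Moreover, no argument can give the stated bound for the \emph{largest} $h$-separated set. A maximal $h$-separated set is an $h$-cover, since otherwise an uncovered point could be added. Therefore
\[ N\ge \frac{S_{d-1}}{\vol(B(x,h))}. \]
Take $d=3$. By Archimedes, a cap of chordal radius $h$ has area $2\pi\cdot h^2/2=\pi h^2$, so $N\ge 4/h^2$. This exceeds $(\sqrt3/2)/h^2$.

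So the plan I would actually carry out is the following. Run the disjoint-cap argument with the exact cap estimate above, which yields the corrected statement
\[ N \le c\,\sqrt d\,\left(\frac{2}{h}\right)^{d-1}, \qquad\text{equivalently } N\le \frac{\sqrt d/2}{(h/2)^{d-1}} \text{ up to an absolute constant}. \]
I would then flag that \cref{thm:density_lower_bound} must use this corrected bound. Its proof uses this lemma only to fix $m$ and the cap areas via \cref{lem:spherical_cap_volume_upper_bound}. There $m$ enters only through $\log m$, so the extra factor $2^{d-1}$ changes only constants, not the rate.
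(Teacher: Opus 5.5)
Your conclusion is correct: the inequality is false as stated, and your counterexample is valid. A maximal $h$-separated set is an $h$-cover, so $N\ge S_{d-1}/\vol(B(x,h))$. For $d=3$ this gives $N\ge 4/h^2>(\sqrt3/2)/h^2$. The statement also fails in every dimension: for $h$ close to $1$ the $2d$ points $\pm e_i$ are pairwise at distance at least $\sqrt2>h$, while the claimed bound tends to $\sqrt d/2<2d$.

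The paper's proof is the same disjoint-cap argument you started with, and it breaks where you predicted. After writing $\sum_i\vol(B(x_i,h/2))\le S_{d-1}$, it invokes \cref{lem:spherical_cap_volume_upper_bound}. That lemma is an \emph{upper} bound on cap area, so it cannot bound $N$ from above. The constant $\frac{\sqrt d/2}{h^{d-1}}$ is what results from treating $\frac{2S_{d-1}}{\sqrt d}h^{d-1}$ as if it were a lower bound on $\vol(B(x_i,h/2))$.

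Where your proposal goes wrong is the repair for \cref{thm:density_lower_bound}. There the lemma is used to assert that $m=\frac{\sqrt d/2}{h^{d-1}}$ points with pairwise distances exceeding $h$ \emph{exist}, and Fano's bound needs $\log m$ to be large. That is a \emph{lower} bound on the packing number. An upper bound such as $N\le c\sqrt d\,(2/h)^{d-1}$ guarantees no such points, so substituting it does not fix the theorem.

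The right fix is already in your covering observation. Combine it with \cref{lem:spherical_cap_volume_upper_bound}, now used in the correct direction:
\[ S_{d-1}\le \sum_{i=1}^N\vol(B(x_i,h))\le N\cdot\frac{2S_{d-1}}{\sqrt d}\,h^{d-1}, \quad\text{i.e.}\quad N\ge\frac{\sqrt d/2}{h^{d-1}}. \]
This is the paper's constant with the inequality reversed, and it is exactly what the lower-bound proof needs. Disjointness of the caps $B(x_i,h/2)$ and the upper bound on their areas are then used as written.

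A smaller point concerns your tangent-projection estimate. It yields $\vol(B(x,h/2))\ge V_{d-1}(h/2)^{d-1}(1-h^2/16)^{(d-1)/2}$, where $V_{d-1}$ is the volume of the unit $(d-1)$-ball. The upper bound it produces therefore carries an extra factor as large as $(16/15)^{(d-1)/2}$, so that argument alone does not give an absolute constant.
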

\begin{proof}
It is clear that, for $i=1,\ldots, N$, $B(x_i,h/2)$ are disjoint. Then, it must be the case that, $\sum_{i=1}^N\vol(B(x_i,h/2))\leq \vol(B(\Scal^{d-1}))$. Using Lemma \ref{lem:spherical_cap_volume_upper_bound}, we get the desired bound on $N$.
\end{proof}

\begin{lemma}\label{lem:spherical_cap_volume_upper_bound}
    Suppose $d\geq 2$ and $h\in (0,1)$. Let $\vol(\Scal^{d-1})=S_{d-1}$. Then, for any $x\in \Scal^{d-1}$, we have,
    \[\vol(B(x,h))\leq \frac{2S_{d-1}}{\sqrt{d}}h^{d-1}.\]
\end{lemma}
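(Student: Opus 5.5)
The plan is to compute the volume of the spherical cap $B(x,h)=\{x'\in\Scal^{d-1}:\|x-x'\|\le h\}$ explicitly in polar coordinates around $x$. Then I will bound the resulting one-dimensional integral by a power of $h$, and compare with $S_{d-1}$ using standard Gamma-function ratio bounds.

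\textbf{Step 1 (the cap as an integral).} Write $x'\in\Scal^{d-1}$ via its angle $\phi\in[0,\pi]$ to $x$. Then $\|x-x'\|^2=2-2\cos\phi$, so $B(x,h)$ is the cap $\{\phi\le\phi_h\}$, where $2\sin(\phi_h/2)=h$. The surface measure is $S_{d-2}\sin^{d-2}\phi\,d\phi$, which gives
\[
\vol(B(x,h))=S_{d-2}\int_0^{\phi_h}\sin^{d-2}\phi\,d\phi .
\]

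\textbf{Step 2 (bounding the integral by a power of $h$).} Substitute $t=2\sin(\phi/2)=\|x-x'\|$. Then $dt=\cos(\phi/2)\,d\phi$ and $\sin\phi=t\cos(\phi/2)=t\sqrt{1-t^2/4}$. The integrand becomes
\[
t^{d-2}(1-t^2/4)^{(d-3)/2}\,dt .
\]
For $d\ge3$ the factor $(1-t^2/4)^{(d-3)/2}$ is at most $1$. For $d=2$ it is at most $(1-1/4)^{-1/2}=2/\sqrt3$ on $t\le h<1$; alternatively, handle $d=2$ directly, since there $\vol(B(x,h))=2\phi_h\le \pi h$ because $\phi_h=2\arcsin(h/2)\le \pi h/2$. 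Either way,
\[
\vol(B(x,h))\le S_{d-2}\,\frac{h^{d-1}}{d-1}
\]
for $d\ge3$, with a comparable bound for $d=2$ that needs a separate check against $2S_1/\sqrt2=2\sqrt2\pi$.

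\textbf{Step 3 (comparing with $S_{d-1}$).} It remains to show that $\frac{S_{d-2}}{(d-1)S_{d-1}}\le\frac{2}{\sqrt d}$. We have
\[
\frac{S_{d-2}}{S_{d-1}}=\frac{\Gamma(d/2)}{\sqrt\pi\,\Gamma((d-1)/2)} .
\]
By Gautschi's inequality (log-convexity of $\Gamma$), $\Gamma(d/2)/\Gamma((d-1)/2)\le\sqrt{d/2}$. So the ratio is at most
\[
\frac{\sqrt{d/2}}{\sqrt\pi\,(d-1)}\le\frac{2}{\sqrt d},
\]
since $d\le 2\sqrt{2\pi}(d-1)$ holds for $d\ge2$. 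This leaves ample slack, so the constant $2$ is not delicate.

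\textbf{Main obstacle.} The main obstacle is bookkeeping rather than ideas. One must get the change of variables and the factor $(1-t^2/4)^{(d-3)/2}$ right, and treat the $d=2$ case cleanly, where the exponent is negative. There the slack in Step 3 (a factor of roughly $\sqrt{2\pi}$) absorbs the $2/\sqrt3$ loss. I would verify $d=2$ numerically as a sanity check.
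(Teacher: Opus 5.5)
Your argument is correct, but it bounds the cap area by a different route than the paper. The paper never writes down the cap integral. Instead it realizes $\nu$ via normalized Gaussians, notes that $\theta_1^2\sim\mathrm{Beta}(1/2,(d-1)/2)$, and uses the symmetry of $\theta_1$ to write $\nu(B(x,h))=\tfrac12\Pr\bigl(\theta_1^2\ge(1-h^2/2)^2\bigr)$. It then cites the Beta tail bound of Lemma~\ref{lem:beta-mass-bound}. That yields $\frac{\Gamma(d/2)}{\sqrt{\pi}\,\Gamma((d-1)/2)}\cdot\frac{h^{d-1}}{d-1}$ times a correction factor $(1-h^2/4)^{(d-1)/2}/(1-h^2/2)$, which it bounds crudely by $2$; this is where $h<1$ enters. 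Your polar coordinates with the chordal substitution $t=2\sin(\phi/2)$ reach the same leading expression directly, since the Beta law is just the normalized $\sin^{d-2}\phi\,d\phi$ measure in other coordinates. From there both proofs finish identically, with $\Gamma(y)/\Gamma(y-\tfrac12)\le\sqrt{y}$ and $d-1\ge d/2$. Your version is self-contained, avoids the factor-$2$ loss, and for $d\ge3$ does not need $h<1$ at all (any $h\le 2$ works). Its cost is the separate $d=2$ case forced by the negative exponent $(d-3)/2$, which you handle correctly, e.g.\ via $2\phi_h=4\arcsin(h/2)\le\pi h\le 2\sqrt{2}\pi h$. The paper's version treats all $d$ at once by reusing a known lemma. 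However, that lemma is stated for $\beta\ge1$, i.e.\ $d\ge3$; its upper bound does hold for every $\beta>0$, but your explicit handling of $d=2$ is the more careful of the two.
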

\begin{proof}
    Let $\nu$ is the uniform distribution over $\Scal^{d-1}$.We first show that $\nu(B(x,h))\leq \frac{2}{\sqrt{d}}h^{d-1}$, and then use the fact that $\vol(B(x,h)) = \nu(B(x,h))S_{d-1}$. Towards bounding $\nu(B(x,h))$ from above, we can generate a random sample $\theta=(\theta_1,\ldots,\theta_d)\sim\nu$ by first drawing $Y_1,\ldots,Y_d$ independently from a standard normal distribution, and then taking
    $$ \theta_i=\frac{Y_i}{(Y_1^2+\cdots+Y_d^2)^{1/2}}.$$
    This works because the distribution $Y=(Y_1,\ldots,Y_d)$ is spherically symmetric. Now for any $k$,the sum $Y_1^2+\cdots,Y_d^2$ is a chi-squared distribution with $k$ degrees of freedom , denoted by $\chi^2(k)$. It is well known that if $A\sim\chi^2(k)$ and $B\sim\chi^2(l)$ are independent, then $A/(A+B)$ has a Beta$(k/2,l/2)$ distribution. Thus $\theta_1^2=Y_1^2/(Y_1^2+\cdots+Y_d^2)$ follows a Beta$(1/2,(d-1)/2)$ distribution.
    Without loss of generality, assume $x=e_1$. Then,
    \[\theta\in B(x,h)\iff \|\theta-e_1\|^2\leq h^2 \iff \theta_1\geq 1-h^2/2.\]
   Letting $\epsilon = 1-(1-h^2/2)^2=h^2(1-h^2/4)$ we get,
    \begin{eqnarray*}
        \nu(B(x,h))&=&\frac{1}{2}\cdot\pr\left(\theta_1^2\geq(1-(h^2/2))^2\right)=\frac{1}{2}\cdot\pr\left(\theta_1^2\geq1-\epsilon\right)\\
        &\leq&\frac{1}{2}\cdot\frac{1}{B(1/2,(d-1)/2)}\cdot\frac{\epsilon^{(d-1)/2}}{(d-1)/2}\cdot(1-\epsilon)^{-1/2}\\
        &=&\frac{1}{2}\cdot\frac{\Gamma(d/2)}{\Gamma(1/2)\Gamma((d-1)/2)(d-1)/2}\cdot h^{d-1}\cdot (1-h^2/4)^{(d-1)/2}\cdot\frac{1}{(1-h^2/2)}\\
        &\leq&\frac{\Gamma(d/2)}{\Gamma(1/2)\Gamma((d-1)/2)(d-1)/2}\cdot h^{d-1}\\
        &=&\frac{1}{\sqrt{\pi}(d-1)/2}\cdot\frac{\Gamma(d/2)}{\Gamma(d/2-1/2)}\cdot h^{d-1}\\
        &\leq&\frac{\sqrt{d/2}}{\sqrt{\pi}(d-1)/2}\cdot h^{d-1}\\
        &\leq&\frac{\sqrt{d/2}}{\sqrt{\pi}(d/4)}\cdot h^{d-1}\leq\frac{2}{\sqrt{d}}\cdot h^{d-1}
    \end{eqnarray*}
    where, the first inequality is due to Lemma \ref{lem:beta-mass-bound}, the second inequality is due to the fact that $h\in (0,1)$, the third inequality is due the fact that $\Gamma(x)/\Gamma(x-0.5)\leq\sqrt{x}$ and the fourth inequality is due to the condition on $d$.
\end{proof}

\begin{lemma}[Lemma~13~\cite{dasgupta2020expressivity}]
\label{lem:beta-mass-bound}
Suppose $Z$ has Beta$(\alpha,\beta)$ distribution with $\alpha\leq 1$ and $\beta\geq 1$. For any $0<\epsilon<1$,
\[ \frac{1}{B(\alpha,\beta)}\cdot\frac{\epsilon^{\beta}}{\beta}\leq\pr(Z\geq 1-\epsilon)\leq \frac{1}{B(\alpha,\beta)}\cdot\frac{\epsilon^{\beta}}{\beta}\cdot (1-\epsilon)^{\alpha-1}. \]
\end{lemma}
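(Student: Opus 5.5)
This is the Beta tail bound (Lemma 13 of \cite{dasgupta2020expressivity}), and I would prove it directly from the density. Write
\[\pr(Z\geq 1-\epsilon)=\frac{1}{B(\alpha,\beta)}\int_{1-\epsilon}^{1} z^{\alpha-1}(1-z)^{\beta-1}\,dz,\]
and substitute $u=1-z$, which turns the integral into $\int_0^{\epsilon}(1-u)^{\alpha-1}u^{\beta-1}\,du$. The whole argument reduces to bounding the factor $(1-u)^{\alpha-1}$ on $[0,\epsilon]$ from above and below by constants.

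\textbf{Lower bound.} Since $\alpha\leq 1$, the exponent $\alpha-1$ is nonpositive. For $u\in[0,\epsilon]$ we have $1-u\in(0,1]$, so $(1-u)^{\alpha-1}\geq 1$. Therefore the integral is at least
\[\int_0^{\epsilon} u^{\beta-1}\,du=\frac{\epsilon^\beta}{\beta},\]
using $\beta\geq 1>0$. Dividing by $B(\alpha,\beta)$ gives the left inequality.

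\textbf{Upper bound.} Again because $\alpha-1\leq 0$, the map $u\mapsto(1-u)^{\alpha-1}$ is nondecreasing on $[0,\epsilon]\subset[0,1)$. So it is at most its value at the right endpoint, $(1-\epsilon)^{\alpha-1}$. Pulling this constant out of the integral leaves $(1-\epsilon)^{\alpha-1}\,\epsilon^\beta/\beta$, which gives the right inequality after dividing by $B(\alpha,\beta)$.

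There is no real obstacle here. The only points to check are the direction of monotonicity, which relies on $\alpha\leq 1$, and that $\epsilon<1$, which keeps $(1-\epsilon)^{\alpha-1}$ finite. The hypothesis $\beta\geq 1$ is not actually needed; $\beta>0$ suffices for $u^{\beta-1}$ to be integrable at $0$.
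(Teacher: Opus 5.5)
Your proof is correct: the substitution $u = 1-z$ followed by sandwiching $(1-u)^{\alpha-1}$ between $1$ and $(1-\epsilon)^{\alpha-1}$ on $[0,\epsilon]$ (using $\alpha \le 1$) is the standard direct computation. The paper gives no proof of its own and simply imports the result as Lemma 13 of Dasgupta and Tosh, where it is established by essentially this same argument; your side remark that $\beta > 0$ already suffices is also right.
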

\section{Proofs from \cref{sec:mode_estimation}}\label{sec:mode_estimation_proofs}

\subsection{Proof of Theorem \ref{thm:single_mode_guarantee}}
\begin{proof}
    Let $x_0\in\Mcal$ and define \[r_n(x_0) = \inf\{r: B(x_0,r)\cap X_n\neq \emptyset\}.\]
    Let $\tau\in(0,1)$ be a constant to be fixed later. Assume for now that $r_n(x_0)\leq \frac{\tau}{2} r_{x_0}$. In due course, we will show that this holds with high probability. Also, consider $\rtilde$, to be appropriately chosen later, satisfying the following:
    \[\frac{2r_n(x_0)}{\tau} \leq \rtilde \leq r_{x_0}.\]
    Since $\rtilde\geq 2r_n(x_0)$, our goal is to show
    \[\sup_{x\in\Scal^{d-1}\setminus B(x_0,\rtilde)} \fhat_n(x) < \inf_{x\in B(x_0,r_n(x_0))} \fhat_n(x)\]
    as this will ensure that $\|\xhat_0-x\|\leq\rtilde$. To see this, note that if $\sup_{x\in B(x_0,\rtilde)\setminus B(x_0,r_n(x_0))} \fhat_n(x) > \sup_{x\in \Scal^{d-1}\setminus B(x_0,\rtilde)} \fhat_n(x)$, then $\xhat_0\in B(x_0,\rtilde)$. On the other hand, if $\sup_{x\in B(x_0,\rtilde)\setminus B(x_0,r_n(x_0))} \fhat_n(x) \leq \sup_{x\in \Scal^{d-1}\setminus B(x_0,\rtilde)} \fhat_n(x)$, then $\xhat_0\in B(x_0, r_n(x_0))\subset B(x_0, \rtilde)$.

    We first bound $\sup_{x\in \Scal^{d-1}\setminus B(x_0,\rtilde)} \fhat_n(x)$ from above. Using definition \ref{def:local_parameterization}, observe that, 
    \begin{equation}\label{eq:sup_upperbound}
        \sup_{x\in A_{x_0}\setminus B(x_0,\rtilde/2)} f(x) \leq f(x_0)-\Ccheck_{x_0}(\rtilde/2)^2 \triangleq \Fhat
    \end{equation}
    To apply Lemma \ref{lem:relative_density_bound}, note the following. For any $x\in \Scal^{d-1} \setminus B(x_0,\rtilde/2), f(x)\leq \Fhat$. This is because $A_{x_0}$ is a level set of unimodal $f$, that is, $\sup_{x\notin A_{x_0}}f(x) \leq \inf_{x\in A_{x_0}} f(x)$. Now, for any $x\in \Scal^{d-1}\setminus B(x_0,\rtilde)$, let $\epsilon = \Fhat-f(x)$. Then it is easy to see that $\rhat(\epsilon,x)\geq \rtilde/2$. Therefore, for every $j=1.,\ldots,m$, if we can somehow restrict $\diam(C_j)\leq \frac{\rtilde}{2}$, then it automatically ensures that $\diam(C_j)\leq \rhat(\epsilon,x)$.
    Applying Lemma \ref{lem:relative_density_bound}, the following holds with probability at least $1-\delta$,
    \begin{eqnarray}\label{eq:final_sup_upperbound}
        \sup_{x\in \Scal^{d-1}\setminus B(x_0,\rtilde)}\fhat_n(x) &\leq& \left(1+ C_{\delta}\sqrt{\frac{d\log m}{k}}\right)(f(x)+\epsilon) + \gamma_n \nonumber \\
        &=& \left(1+ C_{\delta}\sqrt{\frac{d\log m}{k}}\right)\Fhat + \gamma_n \nonumber \\
        &=& \left(1+ C_{\delta}\sqrt{\frac{d\log m}{k}}\right)(f(x_0)-\Ccheck_{x_0}(\rtilde/2)^2) + \gamma_n
    \end{eqnarray}

    Now we bound $\inf_{x\in B(x_0,r_n(x_0)}\fhat_n(x)$ from below. Since $\tau\rtilde\leq \tau r_{x_0}\leq r_{x_0}$, using definition \ref{def:local_parameterization}, observe that,
    \begin{equation}\label{eq:inf_lowerbound}
        \inf_{x\in B(x_0,\tau\rtilde)} f(x) \geq f(x_0)-\Chat_{x_0}(\tau\rtilde)^2 \triangleq \Fcheck
    \end{equation}
    Now for any $x\in B(x_0,r_n(x_0))$, let $\epsilon = f(x)-\Fcheck$. Then we have $\rcheck(\epsilon,x)\geq \tau\rcheck-r_n(x_0)\geq \tau\rtilde-\tau\rtilde/2 = \tau\rtilde/2$. 
    Therefore, for every $j=1.,\ldots,m$, if we can somehow restrict $\diam(C_j)\leq \frac{\tau\rtilde}{2}$, then it automatically ensures that $\diam(C_j)\leq \rcheck(\epsilon,x)$.
    Again, applying Lemma \ref{lem:relative_density_bound}, the following holds with probability at least $1-\delta$,

    \begin{eqnarray}\label{eq:final_inf_lowerbound}
    \inf_{x\in B(x_0,r_n(x_0))}\fhat_n(x)&\geq& \left(1- C_{\delta}\sqrt{\frac{d\log m}{k}}\right)(f(x)-\epsilon) + \gamma_n \nonumber \\
    &=& \left(1 - C_{\delta}\sqrt{\frac{d\log m}{k}}\right)\Fcheck - \gamma_n \nonumber \\
    &=& \left(1 - C_{\delta}\sqrt{\frac{d\log m}{k}}\right)(f(x_0)-\Chat_{x_0}(\tau\rtilde)^2) - \gamma_n
    \end{eqnarray}
    Now, we can pick $\tau$ and $\rtilde$ such that the r.h.s. of \eqref{eq:final_sup_upperbound} is less than the r.h.s. of \eqref{eq:final_inf_lowerbound}. It suffices to pick $\tau^2 = \frac{\Ccheck_{x_0}}{8\Chat_{x_0}}$ and $\rtilde^2\geq \frac{16}{\Ccheck_{x_0}}\left(f(x_0)C_{\delta}\sqrt{\frac{d\log m}{k}}+\gamma_n\right)$. Using Lemma \ref{lem:sample_size_bound} and the Theorem's assumption on $k$, it is easy to check that if $n\geq \frac{9m\log(m/\delta)\max\{\|f\|_{\infty},1\}}{S_{d-1}(f(x_0)C_{\delta})^2d\log m}$, then $\gamma_n$ is at most $f(x_0)C_{\delta}\sqrt{\frac{d\log m}{k}}$ and therefore $\rtilde^2\geq 32f(x_0)\frac{C_{\delta}}{\Ccheck_{x_0}}\sqrt{\frac{d\log m}{k}}$. Therefore, we are free to choose $\rtilde\geq \max\left\{\sqrt{32f(x_0)\frac{C_{\delta}}{\Ccheck_{x_0}}\sqrt{\frac{d\log m}{k}}}, \frac{2r_n(x_0)}{\tau}\right\}$.

    All that remains to show is that, (i) $r_n(x_0)\leq \frac{\tau}{2}r_{x_0}$ with high probability, and (ii) $\diam(C_j)\leq \min\{\rtilde/2,\tau\rtilde/2\}$ for every $j = 1.,\ldots,m$.
    Let us focus on the first of the two. Let $r= \sqrt{32f(x_0)\frac{C_{\delta}}{\Ccheck_{x_0}}\sqrt{\frac{d\log m}{k}}}$. The Theorem's assumption on $k$ ensures that $r\leq r_{x_0}$. Next, we show that with high probability, $|B(x_0,\tau r/2)\cap X_n|\geq \frac{k}{n}>0$ which implies that $r_n(x_0)\leq \tau r/2$. We first bound the probability mass of $f(B(x_0,\tau r/2))$ from below as follows:
    \begin{eqnarray}\label{eq:ball_mass_lower_bound}
        f(B(x_0,\tau r/2)) &\geq& \vol(B(x_0,\tau r/2))\inf_{x\in B(x_0,\tau r/2)} f(x) \nonumber \\
        &\stackrel{a}{\geq}& \frac{1}{3\sqrt{d}} (3/4)^{\frac{d-1}{2}}S_{d-1}(\tau r/2)^{d-1} \inf_{x\in B(x_0,\tau r/2)} f(x) \nonumber \\
        &=&\frac{1}{3\sqrt{d}} (3/4)^{\frac{d-1}{2}}S_{d-1} \left(f(x_0)\frac{C_{\delta}}{\Chat_{x_0}}\right)^{\frac{d-1}{2}}\left(\frac{d\log m}{k}\right)^{\frac{d-1}{4}}\inf_{x\in B(x_0,\tau r/2)} f(x) \nonumber \\
        &\stackrel{b}{\geq}&\frac{1}{3\sqrt{d}} S_{d-1} \left(\frac{3f(x_0)}{4}\frac{C_{\delta}}{\Chat_{x_0}}\right)^{\frac{d-1}{2}}\left(\frac{d\log m}{k}\right)^{\frac{d-1}{4}}\left(f(x_0)-\Chat_{x_0}(\tau r/2)^2\right) \nonumber \\
        &= & \frac{1}{3\sqrt{d}} S_{d-1} \left(\frac{3f(x_0)}{4}\frac{C_{\delta}}{\Chat_{x_0}}\right)^{\frac{d-1}{2}}\left(\frac{d\log m}{k}\right)^{\frac{d-1}{4}}f(x_0)\left(1-C_{\delta}\sqrt{\frac{d\log m}{k}}\right) \nonumber \\
        &\geq & \frac{1}{6\sqrt{d}} S_{d-1} \left(\frac{3f(x_0)}{4}\frac{C_{\delta}}{\Chat_{x_0}}\right)^{\frac{d-1}{2}}\left(\frac{d\log m}{k}\right)^{\frac{d-1}{4}}f(x_0) \nonumber \\
        & =& \frac{S_{d-1}}{6\sqrt{d}}\left(f(x_0)\right)^{\frac{d+1}{2}}\left(\frac{3C_{\delta}}{4\Chat_{x_0}}\right)^{\frac{d-1}{2}}\left(\frac{d\log m}{k}\right)^{\frac{d-1}{4}}
    \end{eqnarray}
where, inequality $a$ follows from Lemma \ref{lem:mass-to-radius} and the fact that $\nu(C) = \frac{\vol(C)}{S_{d-1}}$ for any $C\subset\Scal^{d-1}$. Inequality $b$ follows from Definition \ref{def:local_parameterization}.

If the r.h.s. of \eqref{eq:ball_mass_lower_bound} is at least $\frac{k}{n}+\frac{C_{\delta}\sqrt{kd\log n}}{n}$, then using Lemma 7 of \cite{chaudhuri2010cluster_tree}, with probability at least $1-\delta$, $|B(x_0,\tau r/2)\cap X_n|\geq k$. 
Therefore, $r_n(x_0)\leq \tau r/2$. This happens whenever $k\geq C_{\delta}^2d\log n$ and $k\leq \left(\frac{nS_{d-1}}{12\sqrt{d}}\right)^{\frac{4}{d+3}}\left(f(x_0)\right)^{\frac{2d+2}{d+3}}\left(\left(\frac{3C_{\delta}}{4\Chat_{x_0}}\right)^2d\log m\right)^{\frac{d-1}{d+3}}$.
Further, using Theorem's assumption on $k$, it follows that, $r_n(x_0)\leq \tau r/2\leq \tau r_{x_0}/2$. It is now clear that we can choose $\rtilde=r$. Once the choice of $\rtilde$ is finalized, using the fact that $\tau <1$, Lemma \ref{lem:diameter_bound} and the Theorem's assumption on $k$, it is immediate that $\diam(C_j)\leq \tau\rtilde/2$, for every $j=1,\ldots,m$.
\end{proof}

\subsection{Proof of Theorem \ref{thm: multiple_modes_guarantee}}

Since the proof is a bit involved, before presenting the detailed proof, we first provide a sketch of proof which is easier to follow.
\paragraph{Proof sketch:} 
We define \[r_n(x_0) = \inf\{r: B(x_0,r)\cap X_n\neq \emptyset\}\]
and choose $\tau\in(0,1)$ and $\rtilde$  satisfying the following:
\[\frac{2r_n(x_0)}{\tau} \leq \rtilde \leq \rbar \leq r_{x_0}\]
Then, $B(x_0,\rtilde)\subset B(x_0,\rbar)\subset A_{x_0}$. 
It is clear that if Alg. \ref{alg:multiple_mode_estimation} picks a maximizer of $\fhat_n$ from a set contained in $A_{x_0}$ and intersecting $B(x_0, r_n(x)))$, then this maximizer must be within $\rtilde$ of $x_0$. If $\rtilde$ is appropriately chosen, then that would imply the result. Thus, our proof strategy  is to show that Alg. \ref{alg:multiple_mode_estimation} indeed picks a maximizer from such a set.  To this end, let $\lambda\geq \inf_{x\in B(x_0,r_n(x_0))}\fhat_n(x)$ be the first level in the iteration of Alg. \ref{alg:multiple_mode_estimation} containing a point from $B(x_0,r_n(x_0))$. Also, let $S$ be the set that $r$-separates $A_{x_0}$ from $\Scal^{d-1}_{\lambda_{x_0}}\setminus A_{x_0}$. Define
    \[\Scal^{d-1}_{S\rightsquigarrow x_0} \triangleq \{x: \exists \mbox{ a path } \Pcal \mbox{ from } x \mbox{ to } x_0, \Pcal \cap S =\emptyset \}\]
to be the set of points reachable from $x_0$ without crossing $S$. Clearly, $\Scal^{d-1}_{S\rightsquigarrow x_0} \supseteq A_{x_0}$ and by Definition \ref{def:local_parameterization}, $\Scal^{d-1}_{S\rightsquigarrow x_0} \cap \Scal^{d-1}_{\lambda_{x_0}} = A_{x_0}$. The crux of our proof strategy is to show that, (i) at the level $\lambda-\tilde{\epsilon}$, $G(\lambda-\tilde{\epsilon})$ contains no point from $\Scal^{d-1}_{S\rightsquigarrow x_0}\setminus B(x_0,\rbar)$, and (ii) there does not exist any edge between any $x\in B(x_0,\rbar)$ and points in $\Scal^{d-1}\setminus \Scal^{d-1}_{S\rightsquigarrow x_0}$ in $G(\lambda-\tilde{\epsilon})$. Therefore, it follows that there exists a CC $\tilde{A}_j$ of $G(\lambda-\tilde{\epsilon})$ that is disconnected from $\Scal^{d-1}\setminus\Scal^{d-1}_{S\rightsquigarrow x_0}$, and thus any previous mode found by Alg. \ref{alg:multiple_mode_estimation} such that $\tilde{A}_j\cap X_{n,\lambda}$ contains only points from $A_{x_0}$ and intersects $B(x_0,r_n(x_0))$. Therefore, the procedure picks a point 
$\hat{x}_0 = \argmax_{x\in \tilde{A}_j\cap X_{n,\lambda}}\fhat_n(x)$ satisfying $\|\hat{x}_0-x_0\|\leq\rtilde$.

To achieve the first goal, it is enough to show that,
\begin{equation}\label{eqn:multiple_modes_proof_crux}
    \sup_{x\in \Scal^{d-1}_{S\rightsquigarrow x_0}\setminus B(x_0,\rbar)}\fhat_n(x) \leq \inf_{x\in B(x_0,r_n(x_0))}\fhat_n(x) -\gamma_n
\end{equation}
since for $\tilde{\epsilon}\leq\gamma_n$, that would imply:
\begin{align*}
    \sup_{x\in \Scal^{d-1}_{S\rightsquigarrow x_0}\setminus B(x_0,\rbar)}\fhat_n(x) &\leq \inf_{x\in B(x_0,r_n(x_0))}\fhat_n(x) -\gamma_n \leq \inf_{x\in B(x_0,r_n(x_0))}\fhat_n(x) -\tilde{\epsilon}\leq \lambda - \tilde{\epsilon}
\end{align*}
To prove \eqref{eqn:multiple_modes_proof_crux}, we repeatedly invoke Lemma \ref{lem:relative_density_bound} with different $\epsilon$ values. In particular, we find the upper bound of the l.h.s. of \eqref{eqn:multiple_modes_proof_crux} by invoking Lemma \ref{lem:relative_density_bound} with properly tuned $\epsilon$, and show that this is less than the lower bound of the r.h.s. of \eqref{eqn:multiple_modes_proof_crux} obtained by invoking again Lemma \ref{lem:relative_density_bound} with properly tuned (but different than before) $\epsilon$.

To achieve the second goal, we ensure that for any point $x\in B(x_0,\rbar)$, the ball $B(x, \rtilde/2)$ contains at least $k$ points such that $r_k(x)\leq \rtilde/2$. Further, appropriately setting $\rtilde$, in comparison to separation parameter $r$ and $\alpha$ given in Definition \ref{def:density_graph}, we ensure that there can not be an edge between $x\in B(x_0,\rbar)$ and points in $\Scal^{d-1}\setminus \Scal^{d-1}_{S\rightsquigarrow x_0}$ in $G(\lambda-\tilde{\epsilon})$.
\qed

Now we present the actual proof. 
\begin{proof}
    Let $r_n(x_0) = \inf\{r: B(x_0,r)\cap X_n\neq \emptyset\}$ be the smallest radius around $x_0$ containing a sample from $X_n$. Similar to the proof of Theorem \ref{thm:single_mode_guarantee}, let $\tau\in(0,1)$ be a constant to be fixed later. Assume for now that $r_n(x_0)\leq \frac{\tau}{2} r_{x_0}$. In due course, we will show that this holds with high probability. Also, consider $\rtilde$, to be appropriately chosen later, satisfying the following:
    \[\frac{2r_n(x_0)}{\tau} \leq \rtilde \leq r_{x_0}.\]

Here, in addition we use a new scale $\rbar = r_{x_0}/2$ satisfying the following:
\[\frac{2r_n(x_0)}{\tau} \leq \rtilde \leq \rbar \leq r_{x_0}.\]
Clearly, $B(x_0,\rbar)\subset B(x_0,r_{x_0}\subset A_{x_0}$. Note that, for appropriate choice of $\rtilde$, if the maximizer of $\fhat_n$ comes from a set contained in $A_{x_0}$ and intersecting $B(x_0,r_n(x_0))$, then that maximizer must be within $\rtilde$ of $x_0$. Next, we show that Alg. \ref{alg:multiple_mode_estimation} indeed picks a maximizer out of such a set.

To this end, let $\lambda\geq \inf_{x\in B(x_0,r_n(x_0))}\fhat_n(x)$ be the first level in the iteration of Alg. \ref{alg:multiple_mode_estimation} containing a point from $B(x_0,r_n(x_0))$. Also, let $S$ be the set that $r$-separates $A_{x_0}$ from $\Scal^{d-1}_{\lambda_{x_0}}\setminus A_{x_0}$. Define
    \[\Scal^{d-1}_{S\rightsquigarrow x_0} \triangleq \{x: \exists \mbox{ a path } \Pcal \mbox{ from } x \mbox{ to } x_0, \Pcal \cap S =\emptyset \}\]
to be the set of points reachable from $x_0$ without crossing $S$. Clearly, $\Scal^{d-1}_{S\rightsquigarrow x_0} \supseteq A_{x_0}$ and by Definition \ref{def:local_parameterization}, $\Scal^{d-1}_{S\rightsquigarrow x_0} \cap \Scal^{d-1}_{\lambda_{x_0}} = A_{x_0}$.

Our goal is to show that, (i)  at the level $\lambda-\tilde{\epsilon}$, $G(\lambda-\tilde{\epsilon})$ does not contain any point from $\Scal^{d-1}_{S\rightsquigarrow x_0}\setminus B(x_0,\rbar)$, and (ii) there does not exist any edge between any $x\in B(x_0,\rbar)$ and points in $\Scal^{d-1}\setminus \Scal^{d-1}_{S\rightsquigarrow x_0}$ in $G(\lambda-\tilde{\epsilon})$. Whenever the above goals are satisfied, it follows that there exists a CC $\tilde{A}_j$ of $G(\lambda-\tilde{\epsilon})$ that is disconnected from $\Scal^{d-1}\setminus\Scal^{d-1}_{S\rightsquigarrow x_0}$, and thus is disconnected from any previous mode found by Alg. \ref{alg:multiple_mode_estimation} such that $\tilde{A}_j\cap X_{n,\lambda}$ contains only points from $A_{x_0}$ and intersects $B(x_0,r_n(x_0))$. Therefore, Alg. \ref{alg:multiple_mode_estimation} picks a point 
$\hat{x}_0 = \argmax_{x\in \tilde{A}_j\cap X_{n,\lambda}}\fhat_n(x)$ satisfying $\|\hat{x}_0-x_0\|\leq\rtilde$.

Rest of the proof ensures that the above two goals are satisfied. We start with the first goal. Let $\bar{F} = f(x_0)-\Ccheck_{x_0}(\rbar/2)^2$. Note that $\forall x \in \Scal^{d-1}_{S\rightsquigarrow x_0}\setminus B(x_0,\rbar)$ we have $\sup_{x'\in B(x,\rtilde/2)}f(x') \leq  \bar{F}$. To see this, first observe that, 
\[B(x,\rtilde/2)\subset \left(\Scal^{d-1}_{S\rightsquigarrow x_0}\setminus B(x_0,\rbar/2)\right)\cup S_r\]
as long as $\rtilde/2\leq r$. We separately show that $\sup_{x\in \Scal^{d-1}_{S\rightsquigarrow x_0}\setminus B(x_0,\rbar/2)}f(x) \leq \bar{F}$ and $\sup_{x\in S_r}f(x) \leq \bar{F}$. The set $\Scal^{d-1}_{S\rightsquigarrow x_0}\setminus B(x_0,\rbar/2)$ can be written as $\Scal^{d-1}_{S\rightsquigarrow x_0}\setminus B(x_0,\rbar/2) = A_{x_0}\setminus B(x_0,\rbar/2) \cup \Scal^{d-1}_{S\rightsquigarrow x_0}\setminus A_{x_0}$. It follows immediately from Definition \ref{def:local_parameterization} that $\sup_{x\in A_{x_0}\setminus B(x_0,\rbar/2)} \leq \bar{F}$. Also, from the definition of $\Scal^{d-1}_{S\rightsquigarrow x_0}$, it follows that $\sup_{x\in \Scal^{d-1}_{S\rightsquigarrow x_0}\setminus A_{x_0}}f(x) < \lambda_{x_0}\leq \bar{F}$. Next, by Definition \ref{def:r_separation}, $\sup_{x\in S_r}f(x) <\lambda_{x_0}\leq \bar{F}$.

For any $x\in \Scal^{d-1}_{S\rightsquigarrow x_0}\setminus B(x_0,\rbar)$, let $\epsilon = \bar{F}-f(x)$. Then it is easy to see that $\rhat(\epsilon,x)\geq \rbar/2\geq \rtilde/2$. Therefore, for every $j=1.,\ldots,m$, if we can somehow restrict $\diam(C_j)\leq \frac{\rtilde}{2}$, then it automatically ensures that $\diam(C_j)\leq \rhat(\epsilon,x)$.
    Applying Lemma \ref{lem:relative_density_bound}, the following holds with probability at least $1-\delta$,
    \begin{eqnarray}\label{eq:final_sup_upperbound_multiple_mode}
        \sup_{x\in \Scal^{d-1}_{S\rightsquigarrow x_0}\setminus B(x_0,\rbar)}\fhat_n(x) &\leq& \left(1+ C_{\delta}\sqrt{\frac{d\log m}{k}}\right)(f(x)+\epsilon) + \gamma_n \nonumber \\
        &=& \left(1+ C_{\delta}\sqrt{\frac{d\log m}{k}}\right)\bar{F} + \gamma_n \nonumber \\
        &=& \left(1+ C_{\delta}\sqrt{\frac{d\log m}{k}}\right)(f(x_0)-\Ccheck_{x_0}(\rbar/2)^2) + \gamma_n \nonumber \\
        &\leq& \left(1+ C_{\delta}\sqrt{\frac{d\log m}{k}}\right)(f(x_0)-\Ccheck_{x_0}(\rtilde/2)^2) + \gamma_n
    \end{eqnarray}

    Now, for any $x\in S_r$, by $r$-separation, $f(x)\leq\inf_{x'\in A_{x_0}}f(x')\leq \bar{F}$. Next for any $x\in S_{r/2}$, let $\epsilon = \bar{F}-f(x)$. Then it is easy to see that $\rhat(\epsilon,x)\geq r/2\geq\rtilde/2$, provided $\rtilde\leq r$. Then using the same argument as before the following holds with probability at least $1-\delta$,

    \begin{equation}\label{eqn:separor_sup_upper_bound_multiple_mode}
        \sup_{x\in S_{r/2}}\fhat_n(x) \leq \left(1+ C_{\delta}\sqrt{\frac{d\log m}{k}}\right)(f(x_0)-\Ccheck_{x_0}(\rtilde/2)^2) + \gamma_n
    \end{equation}
    
    Now we bound $\inf_{x\in B(x_0,r_n(x_0)}\fhat_n(x)$ from below. Since $\tau\rtilde\leq \tau r_{x_0}\leq r_{x_0}$, using definition \ref{def:local_parameterization}, observe that,
    \begin{equation}\label{eq:inf_lowerbound}
        \inf_{x\in B(x_0,\tau\rtilde)} f(x) \geq f(x_0)-\Chat_{x_0}(\tau\rtilde)^2 \triangleq \Fcheck
    \end{equation}
    Now for any $x\in B(x_0,r_n(x_0))$, let $\epsilon = f(x)-\Fcheck$. Then we have $\rcheck(\epsilon,x)\geq \tau\rcheck-r_n(x_0)\geq \tau\rtilde-\tau\rtilde/2 = \tau\rtilde/2$. 
    Therefore, for every $j=1.,\ldots,m$, if we can somehow restrict $\diam(C_j)\leq \frac{\tau\rtilde}{2}$, then it automatically ensures that $\diam(C_j)\leq \rcheck(\epsilon,x)$.
    Again, applying Lemma \ref{lem:relative_density_bound}, the following holds with probability at least $1-\delta$,

    \begin{eqnarray}\label{eq:final_inf_lowerbound_multiple_modes}
    \inf_{x\in B(x_0,r_n(x_0))}\fhat_n(x)&\geq& \left(1- C_{\delta}\sqrt{\frac{d\log m}{k}}\right)(f(x)-\epsilon) + \gamma_n \nonumber \\
    &=& \left(1 - C_{\delta}\sqrt{\frac{d\log m}{k}}\right)\Fcheck - \gamma_n \nonumber \\
    &=& \left(1 - C_{\delta}\sqrt{\frac{d\log m}{k}}\right)(f(x_0)-\Chat_{x_0}(\tau\rtilde)^2) - \gamma_n
    \end{eqnarray}

Noting that $\tilde{\epsilon}\leq\gamma_n$, we can pick $\tau$ and $\rtilde$ such that 
\[\sup_{x\in \Scal^{d-1}_{S\rightsquigarrow x_0}\setminus B(x_0,\rbar)}\fhat_n(x) \leq \inf_{x\in B(x_0,r_n(x_0))}\fhat_n(x) -\gamma_n\leq \inf_{x\in B(x_0,r_n(x_0))}\fhat_n(x) -\tilde{\epsilon}\leq \lambda - \tilde{\epsilon}\]
and 
\[\sup_{x\in S_{r/2}}\fhat_n(x) \leq \inf_{x\in B(x_0,r_n(x_0))}\fhat_n(x) -\gamma_n\leq \inf_{x\in B(x_0,r_n(x_0))}\fhat_n(x) -\tilde{\epsilon}\leq \lambda - \tilde{\epsilon}\]
ensuring that the graph $G(\lambda-\tilde{\epsilon})$ contains no points from $\Scal^{d-1}_{S\rightsquigarrow x_0}\setminus B(x_0,\rbar)$ and no pints from $S_{r/2}$. 
It suffices to pick $\tau^2 = \frac{\Ccheck_{x_0}}{8\Chat_{x_0}}$ and $\rtilde^2\geq \frac{16}{\Ccheck_{x_0}}\left(f(x_0)C_{\delta}\sqrt{\frac{d\log m}{k}}+3\gamma_n/2\right)$. Using Lemma \ref{lem:sample_size_bound} and the Theorem's assumption on $k$, it is easy to check that if $n\geq \frac{9m\log(m/\delta)\max\{\|f\|_{\infty},1\}}{S_{d-1}(f(x_0)C_{\delta})^2d\log m}$, then $\gamma_n$ is at most $f(x_0)C_{\delta}\sqrt{\frac{d\log m}{k}}$ and therefore $\rtilde^2\geq 40f(x_0)\frac{C_{\delta}}{\Ccheck_{x_0}}\sqrt{\frac{d\log m}{k}}$. Therefore, we are free to choose $\rtilde\geq \max\left\{\sqrt{40f(x_0)\frac{C_{\delta}}{\Ccheck_{x_0}}\sqrt{\frac{d\log m}{k}}}, \frac{2r_n(x_0)}{\tau}\right\}$.

Let's set $\rtilde = \sqrt{40f(x_0)\frac{C_{\delta}}{\Ccheck_{x_0}}\sqrt{\frac{d\log m}{k}}}$. 

Next, for any $x\in B(x_0,\rbar)$, the ball $B(x,\rtilde/2)$ is fully contained in $A_{x_0}$ since $\rbar+\rtilde/2\leq3\rbar/2=3r_{x_0}/4<r_{x_0}$. We seek to bound $f(B(x,\rtilde/2))$ from below as follows

\begin{eqnarray}\label{eq:ball_mass_lower_bound_multiple_mode}
        f(B(x,\rtilde/2)) &\geq& f(B(x,\tau\rtilde/2)) \nonumber \\
        &\geq& \vol(B(x,\tau\rtilde/2))\inf_{x\in B(x,\tau\rtilde/2)} f(x) \nonumber \\
        &\stackrel{a}{\geq}& \frac{1}{3\sqrt{d}} (3/4)^{\frac{d-1}{2}}S_{d-1}(\tau\rtilde/2)^{d-1} \inf_{x\in B(x,\tau\rtilde/2)} f(x) \nonumber \\
        &\geq& \frac{1}{3\sqrt{d}} (3/4)^{\frac{d-1}{2}}S_{d-1}(\tau\rtilde/2)^{d-1} \lambda_{x_0} \nonumber \\
        &\stackrel{b}{\geq}& \frac{S_{d-1}}{3\sqrt{d}}\left(\frac{15\lambda_{x_0}C_{\delta}}{16\Chat_{x_0}}\right)^{\frac{d-1}{2}}\left(\frac{d\log m}{k}\right)^{\frac{d-1}{4}}
    \end{eqnarray}
where, inequality $a$ follows from Lemma \ref{lem:mass-to-radius} and the fact that $\nu(C) = \frac{\vol(C)}{S_{d-1}}$ for any $C\subset\Scal^{d-1}$. Inequality $b$ follows from the fact that $f(x_0)\geq\lambda_{x_0}$. If the r.h.s. of \eqref{eq:ball_mass_lower_bound_multiple_mode} is at least $\frac{k}{n}+\frac{C_{\delta}\sqrt{kd\log n}}{n}$, then using Lemma 7 of \cite{chaudhuri2010cluster_tree}, with probability at least $1-\delta$, $|B(x_0,\rtilde/2)\cap X_n|\geq k$. Therefore, $r_k(x)\leq \rtilde/2$. This happens whenever $k\geq C_{\delta}^2d\log n$ and $k\leq \left(\frac{S_{d-1}}{6\sqrt{d}}\left(\frac{15f(x_0)C_{\delta}}{64\Ccheck_{x_0}}\right)^2 d\log m\right)^{\frac{d-1}{d+3}}n^{\frac{4}{d+3}}$. If in addition $\rtilde\leq r/\alpha$, then it is easy to see that $\alpha r_k(x)\leq \alpha\rtilde/2\leq r/2$. Therefore, $B(x,\alpha r_k(x))\subset \Scal^{d-1}_{S\rightsquigarrow x_0}$ and there can not be any edge between $x\in B(x_0,\rbar)$ and points in $\Scal^{d-1}\setminus \Scal^{d-1}_{S\rightsquigarrow x_0}$ in $G(\lambda-\tilde{\epsilon})$.

All that remains to show is, (i) $\rtilde\leq \rbar =r_{x_0}/2$, (ii) $r_n(x_0)\leq \tau\rtilde/2\leq \tau r_{x_0}/2$, (iii) $\rtilde\leq r/\alpha$, and (iv) $\diam (C_j)\leq\min\{\rtilde/2,\tau\rtilde/2\}$. If $k\geq \left(\frac{40f(x_0)C_{\delta}}{\Ccheck_{x_0}(\min\{r_{x_0}/2,r/\alpha\})^2}\right)^2d\log m$, using the value of $\rtilde$, the first and the third conditions are satisfied. Note that \eqref{eq:ball_mass_lower_bound_multiple_mode} hold for any $x\in B(x_0,\rbar)$, in particular, for $x_0$. Therefore, $r_n(x_0)\leq\tau\rtilde/2$ and the second condition is satisfied. Finally, using the fact that $\tau<1$, Lemma \ref{lem:diameter_bound} it is immediate that $\diam(C_j)\leq\tau\rtilde/2$ for every $j=1,\ldots,m$ whenever $k\leq \frac{1}{6}\left(\left(\frac{3}{16}f(x_0)\frac{C_{\delta}}{\Chat_{x_0}}\right)^2\log m\right)^{\frac{d-1}{d+3}} m^{\frac{4}{d+3}}$.

\end{proof}

\subsection{Auxiliary proofs}
\begin{lemma}\label{lem:diameter_bound}
    Let $x_0\in\Mcal$. Set $\tau^2 = \frac{\Ccheck_{x_0}}{8\Chat_{x_0}}$, $\rtilde^2_1 = 32f(x_0)\frac{C_{\delta}}{\Ccheck_{x_0}}\sqrt{\frac{d\log m}{k}}$, and  $\rtilde^2_2 = 40f(x_0)\frac{C_{\delta}}{\Ccheck_{x_0}}\sqrt{\frac{d\log m}{k}}$. Assume that the conclusion of the Lemma \ref{lem:inner-outer-balls} holds. Then, $\diam(C_j)\leq \tau\rtilde_1/2$ for every $j=1,\ldots,m$, provided the following holds.
    \[k\leq \frac{1}{6}\left(\left(\frac{3}{16}f(x_0)\frac{C_{\delta}}{\Chat_{x_0}}\right)^2\log m\right)^{\frac{d-1}{d+3}} m^{\frac{4}{d+3}}.\]

    Similarly, $\diam(C_j)\leq \tau\rtilde_2/2$ for every $j=1,\ldots,m$, provided the following holds.
    \[k\leq \frac{1}{6}\left(\left(\frac{5}{64}\lambda_{x_0}\frac{C_{\delta}}{\Chat_{x_0}}\right)^2\log m\right)^{\frac{d-1}{d+3}} m^{\frac{4}{d+3}}.\]
\end{lemma}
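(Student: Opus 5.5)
The plan is to combine \cref{lem:general-diameter-bound} with the explicit values of $\tau$ and $\rtilde_1,\rtilde_2$, and then reduce the requirement $\diam(C_j)\leq\tau\rtilde_i/2$ to an upper bound on $k$ in terms of $m$. Assuming the conclusion of \cref{lem:inner-outer-balls} (and $k\geq C_\delta^2 d\log m$, which the surrounding theorems supply), \cref{lem:general-diameter-bound} gives for every $j$
\[ \diam(C_j)\leq \frac{4}{\sqrt{3}}\left(\frac{6\sqrt{d}\,k}{m}\right)^{1/(d-1)}. \]
So it is enough to show that the right-hand side is at most $\tau\rtilde_i/2$.

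Raising both sides to the power $d-1$, the sufficient condition becomes
\[ \frac{6\sqrt{d}\,k}{m}\leq \left(\frac{\sqrt3}{8}\right)^{d-1}\left(\tau^2\rtilde_i^2\right)^{\frac{d-1}{2}} . \]
Next substitute $\tau^2\rtilde_1^2=\frac{\Ccheck_{x_0}}{8\Chat_{x_0}}\cdot 32 f(x_0)\frac{C_\delta}{\Ccheck_{x_0}}\sqrt{\frac{d\log m}{k}}=\frac{4f(x_0)C_\delta}{\Chat_{x_0}}\left(\frac{d\log m}{k}\right)^{1/2}$. The factor $\Ccheck_{x_0}$ cancels, which is why only $\Chat_{x_0}$ appears in the stated condition. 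The right-hand side is then a constant times $(d\log m)^{(d-1)/4}k^{-(d-1)/4}$.

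Collecting powers of $k$ on the left gives $k^{1+\frac{d-1}{4}}=k^{\frac{d+3}{4}}$. This yields a condition of the form $k^{(d+3)/4}\leq c\, m\,(\cdots)^{(d-1)/4}$, and raising to the power $4/(d+3)$ produces the exponents $\frac{4}{d+3}$ on $m$ and $\frac{d-1}{d+3}$ on the bracket. The second claim follows in the same way with $\rtilde_2$ (constant $40$ in place of $32$). There we also use $f(x_0)\geq\lambda_{x_0}$, so a condition stated with $\lambda_{x_0}$ is stronger and therefore still sufficient.

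The main obstacle is the constant bookkeeping. The factors $\sqrt{d}$, $6$ and $(\sqrt3/8)^{d-1}$ must be absorbed into the stated prefactor $\frac16$ and the inner constants $\frac{3}{16}$ and $\frac{5}{64}$. The $d$-dependence is the awkward part: a $\sqrt d$ sits outside while $d$ appears inside the $\log m$ bracket, and the statement shows only $\log m$. I would first try to bound $\sqrt d\leq d^{(d-1)/4}$ for $d\geq 3$, absorbing it into the $(d\log m)^{(d-1)/4}$ factor and then dropping the leftover $d$ monotonically. Similarly, $\frac{1}{6}$ raised to the power $\frac{4}{d+3}$ is at least $\frac16$, so the prefactor $\frac16$ is safe. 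If the stated constants still do not line up exactly, I would verify that each is no larger than what the derivation permits, since we only need a sufficient condition.
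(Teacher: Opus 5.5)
Your proposal follows the paper's proof essentially step for step. You apply \cref{lem:general-diameter-bound}, substitute $\tau\rtilde_i/2=\sqrt{c_i f(x_0)C_\delta/\Chat_{x_0}}\,(d\log m/k)^{1/4}$ (with $c_1=1$, $c_2=5/4$), and solve for $k$. This gives the same condition $k\le \bigl((\tfrac{3}{16})^2 c_i^2 (f(x_0)C_\delta/\Chat_{x_0})^2\log m\bigr)^{\frac{d-1}{d+3}} d^{\frac{d-3}{d+3}}6^{-\frac{4}{d+3}}m^{\frac{4}{d+3}}$ as the paper. For $\rtilde_2$ the inner constant comes out as $15/64$, so the stated $5/64$ combined with $\lambda_{x_0}\le f(x_0)$ is simply stronger than needed, as you anticipated.

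One small fix: your absorption $\sqrt d\le d^{(d-1)/4}$ fails at $d=2$. The paper avoids this by bounding the combined factor $d^{\frac{d-3}{d+3}}6^{-\frac{4}{d+3}}\ge 2^{-1/5}6^{-4/5}\ge\frac16$, which holds for all $d\ge 2$.
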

\begin{proof}
Since conclusion of \cref{lem:inner-outer-balls} holds, we can apply \cref{lem:general-diameter-bound} to get
\[\diam(C_j) \leq \frac{4}{\sqrt{3}}\left(\frac{6\sqrt{d}k}{m}\right)^{\frac{1}{(d-1)}}\]
Note that, $\frac{\tau\rtilde_1}{2} = \sqrt{f(x_0)\frac{C_{\delta}}{\Chat_{x_0}}}\left(\frac{d\log m}{k}\right)^{1/4}$. Then $\diam(C_j)\leq \tau\rtilde_1/2$ for every $j=1,\ldots,m$ provided
\begin{align*}
        &\frac{4}{\sqrt{3}}\left(\frac{6\sqrt{d}k}{m}\right)^{\frac{1}{(d-1)}} \leq \sqrt{f(x_0)\frac{C_{\delta}}{\Chat_{x_0}}}\left(\frac{d\log m}{k}\right)^{1/4} \\
         \Longrightarrow ~~ & \frac{16}{3}\left(\frac{6\sqrt{d}k}{m}\right)^{\frac{2}{d-1}} \leq f(x_0)\frac{C_{\delta}}{\Chat_{x_0}}\sqrt{\frac{d\log m}{k}} \\
         \Longrightarrow ~~ & \frac{256}{9}\left(\frac{6\sqrt{d}k}{m}\right)^{\frac{4}{d-1}} \leq \left(f(x_0)\frac{C_{\delta}}{\Chat_{x_0}}\right)^2\frac{d\log m}{k} \\
         \Longrightarrow ~~ & k^{\frac{d+3}{d-1}} \leq \left(\left(\frac{3}{16}f(x_0)\frac{C_{\delta}}{\Chat_{x_0}}\right)^2\log m\right) d^{\frac{d-3}{d-1}} m^{\frac{4}{d-1}} /6^{\frac{4}{d-1}}\\
        \Longrightarrow ~~ & k\leq \left(\left(\frac{3}{16}f(x_0)\frac{C_{\delta}}{\Chat_{x_0}}\right)^2\log m\right)^{\frac{d-1}{d+3}} d^{\frac{d-3}{d+3}} m^{\frac{4}{d+3}} /6^{\frac{4}{d+3}}
\end{align*}
Noting that, $d^{\frac{d-3}{d+3}} = d^{1-\frac{6}{d+3}}$ is an increasing function of $d$, we have $d^{\frac{d-3}{d+3}}\geq \frac{1}{2^{1/5}}$ for $d\geq 2$. Similarly, noting that $6^{\frac{4}{d+3}}$ is a decreasing function of $d$, we have $6^{\frac{4}{d+3}}\leq 6^{4/5}$ for $d\geq 2$. Thus, $d^{\frac{d-3}{d+3}}/6^{\frac{4}{d+3}}\geq \frac{1}{2^{1/5}}\cdot\frac{1}{6^{\frac{4}{5}}}=\frac{1}{2^{1/5}}\cdot\frac{1}{2^{4/5}3^{4/5}}=\frac{1}{2}\cdot\frac{1}{3^{4/5}}\geq \frac{1}{2}\cdot\frac{1}{3}=\frac{1}{6}$. Plugging in this value, the choice of $k$ as given in the Lemma statement suffices.

Similarly, note that, $\frac{\tau\rtilde_2}{2} = \sqrt{\frac{5}{4}f(x_0)\frac{C_{\delta}}{\Chat_{x_0}}}\left(\frac{d\log m}{k}\right)^{1/4} \geq \sqrt{\frac{5\lambda_{x_0}C_{\delta}}{4\Chat_{x_0}}}\left(\frac{d\log m}{k}\right)^{1/4}$. Using similar steps as above, the result follows.
\end{proof}

\begin{lemma}\label{lem:sample_size_bound}
    Let $x_0\in \Mcal$, $k\geq \frac{(f(x_0)C_{\delta})^2}{9}d\log m$, and  $\gamma_n = \alpha_n/(S_{d-1}(k/m))$, where $\alpha_n = 2\sqrt{\frac{kS_{d-1}\|f\|_{\infty}\log(m/\delta)}{mn}}+\frac{2\log (m/\delta)}{3n}$. Then,  $\gamma_n \leq f(x_0)C_{\delta}\sqrt{\frac{d\log m}{k}}$ provided the following holds, 
    \[n \geq \frac{9m\log(m/\delta)\max\{\|f\|_{\infty},1\}}{S_{d-1}(f(x_0)C_{\delta})^2d\log m}.\]
\end{lemma}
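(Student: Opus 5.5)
The plan is to bound the two summands of $\gamma_n = \frac{m\alpha_n}{S_{d-1}k}$ separately. We show that each is at most half of the target $T \triangleq f(x_0)C_{\delta}\sqrt{d\log m/k}$, or at least that their sum is at most $T$. Writing out $\gamma_n$ explicitly gives
\[
\gamma_n = 2\sqrt{\frac{m\|f\|_{\infty}\log(m/\delta)}{S_{d-1}k n}} + \frac{2m\log(m/\delta)}{3S_{d-1}kn}.
\]
Set $M \triangleq \max\{\|f\|_\infty,1\}$. The hypothesis on $n$ is equivalent to
\[
\frac{m\log(m/\delta)}{S_{d-1}n} \le \frac{(f(x_0)C_\delta)^2 d\log m}{9M}.
\]
This single inequality is the only input we need.

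For the first term, we substitute the inequality under the square root. This gives
\[
2\sqrt{\frac{\|f\|_\infty}{k}\cdot\frac{(f(x_0)C_\delta)^2 d\log m}{9M}} \le \frac{2}{3} f(x_0)C_\delta\sqrt{\frac{d\log m}{k}} = \frac{2}{3}T,
\]
using $\|f\|_\infty/M\le 1$. For the second term, the same substitution yields
\[
\frac{2}{3k}\cdot\frac{(f(x_0)C_\delta)^2 d\log m}{9M} \le \frac{2}{27}\,\frac{(f(x_0)C_\delta)^2 d\log m}{k},
\]
using $1/M\le1$. We need this to be at most $\frac{1}{3}T$. That is equivalent to $\frac{2}{9}f(x_0)C_\delta\sqrt{d\log m/k}\le 1$, i.e. $k \ge \frac{4}{81}(f(x_0)C_\delta)^2 d\log m$. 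This follows from the assumed $k\ge \frac{(f(x_0)C_\delta)^2}{9}d\log m$, which in fact gives $\frac{2}{27}\cdot 3\,T\sqrt{\cdot}\le \frac{2}{9}T\cdot$ (a factor $\le 1$), so the second term is at most $\frac{2}{9}T$. Summing, $\gamma_n\le (\frac{2}{3}+\frac{2}{9})T\le T$.

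The only delicate point is the second term. Its natural scale is $(f(x_0)C_\delta)^2 d\log m/k$ rather than $T$, so converting it requires the lower bound on $k$. The approach is to factor it as $T\cdot\frac{2}{27}f(x_0)C_\delta\sqrt{d\log m/k}$. The assumption on $k$ gives $f(x_0)C_\delta\sqrt{d\log m/k}\le 3$, so this term is at most $\frac{2}{9}T$. Everything else is direct substitution, with the $\max\{\|f\|_\infty,1\}$ in the hypothesis used to handle both terms simultaneously.
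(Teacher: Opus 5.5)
Your proof is correct and follows essentially the same route as the paper. The paper sets $\beta=\frac{m\log(m/\delta)}{S_{d-1}kn}$ and $M=\max\{\|f\|_\infty,1\}$ and writes $\gamma_n=2\sqrt{\beta\|f\|_\infty}+\frac{2}{3}\beta\le 3\sqrt{\beta M}\le T$; it uses the hypothesis on $n$ to get $9\beta M\le T^2$, and that hypothesis together with the lower bound on $k$ to get $\beta M\le 1$, which absorbs the linear term. You instead bound the two terms against your $T$ separately, obtaining $\frac{8}{9}T$ and using the $k$-bound in the equivalent form $T\le 3$; the muddled remark about ``a factor $\le 1$'' should simply be replaced by your final paragraph's factorization $\frac{2}{27}T^2\le\frac{2}{9}T$.
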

\begin{proof}
    Let $\beta = \frac{m\log(m/\delta)}{S_{d-1}kn}$. Then,
    \begin{align*}
        \gamma_n & = \frac{\alpha_n}{S_{d-1}k/m} = \frac{\left(2\sqrt{\frac{kS_{d-1}\|f\|_{\infty}\log(m/\delta)}{mn}}+\frac{2\log (m/\delta)}{3n}\right)}{S_{d-1}k/m}\\
        & = 2 \sqrt{\beta\|f\|_{\infty}} + \frac{2}{3}\beta \\
        & \leq  2 \sqrt{\beta\max\{\|f\|_{\infty},1\}} + \frac{2}{3}\beta\max\{\|f\|_{\infty},1\} \\
        & \stackrel{a}{\leq} 3 \sqrt{\beta\max\{\|f\|_{\infty},1\}} \\
        & \stackrel{b}{\leq} f(x_0)C_{\delta}\sqrt{\frac{d\log m}{k}}
    \end{align*}
    Inequality $a$ holds, if $\beta\max\{\|f\|_{\infty},1\} \leq 1$, or in other words,
    \begin{equation}\label{eqn:sample_size_bound_1}
        n \geq \frac{m\log(m/\delta)\max\{\|f\|_{\infty},1\}}{kS_{d-1}}
    \end{equation}

    Inequality $b$ holds, if, 
    \begin{equation}\label{eqn:sample_size_bound_2}
        n\geq \frac{9m\log(m/\delta)\max\{\|f\|_{\infty},1\}}{S_{d-1}(f(x_0)C_{\delta})^2d\log m}
    \end{equation}
    Therefore, both the inequalities hold, if, 
    \[n\geq \frac{m\log(m/\delta)\max\{\|f\|_{\infty},1\}}{S_{d-1}}\times \max\left\{\frac{1}{k}, \frac{9}{(f(x_0)C_{\delta})^2d\log m}\right\}.\]
    Since $k\geq \frac{(f(x_0)C_{\delta})^2}{9}d\log m$, choosing $n$ as given in the Lemma statement suffices.
\end{proof}
\section{Empirical evaluations details}\label{sec:empirical_evaluations_extra}
Since our density and mode estimation results hold for data lying on the unit sphere, for our empirical evaluations, we choose distributions that are supported on the unit sphere. A well known distribution defined on the unit sphere is von Mises-Fisher distribution which is a directional distribution on the surface of the unit sphere $\mathcal{S}^{d-1}$. For any $x\in\mathcal{S}^{d-1}$, the probability density of a von Mises-Fisher distribution is given as in \eqref{eq:von_mises_fisher}
where, $\mu\in\mathcal{S}^{d-1}$ is the mean direction, $\kappa>0$ is the concentration parameter, $d$ is the dimension, and $I$ is the modified Bessel function of the first kind.  The distribution becomes narrower around $\mu$ with increasing $\kappa$. In particular, the reciprocal value $1/\kappa$ resembles the variance parameter of the normal distribution. We chose a mixture of two von Mises-Fisher distributions given in \eqref{eq:von_mises_fisher_mixture}
where, for $i=1,2$,  $f_i(x)$ is specified, using \eqref{eq:von_mises_fisher} by mean direction $\mu_i$, concentration parameter $\kappa_i$ and $w\in(0,1)$ is the mixture coefficient. In Fig.\ref{fig:vonmises}, we visually depict a mixture of von Mises-Fisher distribution in $\mathcal{S}^2$, where $\mu_1 = \left(-\sqrt{0.9},-\sqrt{0.1},0\right)\in\mathcal{S}^2$, $\kappa_1=10$, $\mu_2 = \left(-\sqrt{0.01}, -\sqrt{0.99}, 0\right)\in\mathcal{S}^2$, $\kappa_2 = 5$, and $w = 0.3$.

\subsection{Data generation process and computing environment}
For our empirical evaluations, we generate samples from a mixture of von Moses-Fisher distribution as given in \eqref{eq:von_mises_fisher_mixture}. Specifically, we use \texttt{scipy}\footnote{\url{https://docs.scipy.org/doc/scipy/reference/generated/scipy.stats.vonmises_fisher.html}} to generate samples from an individual von Mises-Fisher distribution. To generate samples from a mixture of two von Mises-Fisher distributions we first need to specify the component means $\mu_1$ and $\mu_2$. We randomly pick $\mu_1$ from $\mathcal{S}^{d-1}$ and then randomly pick $\mu_2 \in\mathcal{S}^{d-1}$ that makes an angle $\pi/4$ with $\mu_1$ (basically, once we have chosen $\mu_1$, we keep on sampling points uniformly at random from $\mathcal{S}^{d-1}$ until we find a point that makes an angle $\pi/4$ with $\mu_1$.) We use two sets of values for $\kappa_1$ and $\kappa_2$. In one set we use $\kappa_1 = 10$ and $\kappa2=5$. In another set we use $\kappa_1 = 80$ and $\kappa_2 = 100$. For larger $\kappa$ values individual von Mises-Fisher distributions becomes narrower around their respective means. We set mixture parameter $w=0.3$.

Once we specify $d, \mu_1,\mu_2\in\mathcal{S}^{d-1}, \kappa_1,\kappa_2>0$ and $w\in (0,1)$ we can easily sample data points from the resulting mixture of von Mises-fisher distribution using \texttt{scipy}. We sample three sets of iid samples from mixture of von Mises-Fisher distribution. The first set contains 10000 samples and is used to train the density estimators (training set). The second set contains 2000 samples and is used to choose hyper-parameters of various estimators (validation set) and the last set contains 10000 samples and is used to test the performance of the density estimators (test set).

We run our experiments on a laptop with Intel Xeon W-10855M Processor, 64GB memory and NVDIAQuadro RTX 5000 Mobile GPU (with 16GB memory).

\subsection{Hyper-parameter selection}

For each of the three estimators KNNDE, KDE and EaSDE, we select hyper-parameters using the validation set. If there are $n$ samples in the training set, then for KNNDE, we choose the best value of $k$, the number of nearest neighbors, from the set $\{1, 10, 50, \log(n), \sqrt{n}, n/8, n/4, n/2, 3n/4\}$ for which empirical total variation distance of KNNDE is smallest on the validation set. For any data point, we find its k nearest neighbors from the training set using \texttt{sklear.neighbors} from \texttt{scikit-learn}.\footnote{\url{https://scikit-learn.org/stable/}}

We implemented KDE using \texttt{scikit-learn}'s \texttt{KernelDensity} package. We used Gaussian kernel and the bandwidth $h$ of the Gaissian kernel was chosen using the validation set. We selected the best $h$ value from the set \texttt{np.logspace(-2, 0, 20)} containing 20 values in the log space from the interval $[10^{-2}, 1]$.

For EasDE, we varied $m$ via varying the expansion factor which is defined as $m/d$. For each fixed $m$, we selected the value of $k$ using the validation set. The various values of $k$ that were considered were from the set $[d\log(m) -8, d\log(m) +8]$.

Eroor bars of EaSDE we computed from 10 independent runs of EaSDE, each time using a different random projection matrix $\Theta$.


\begin{figure}
  \begin{center}
    \includegraphics[scale=0.6]{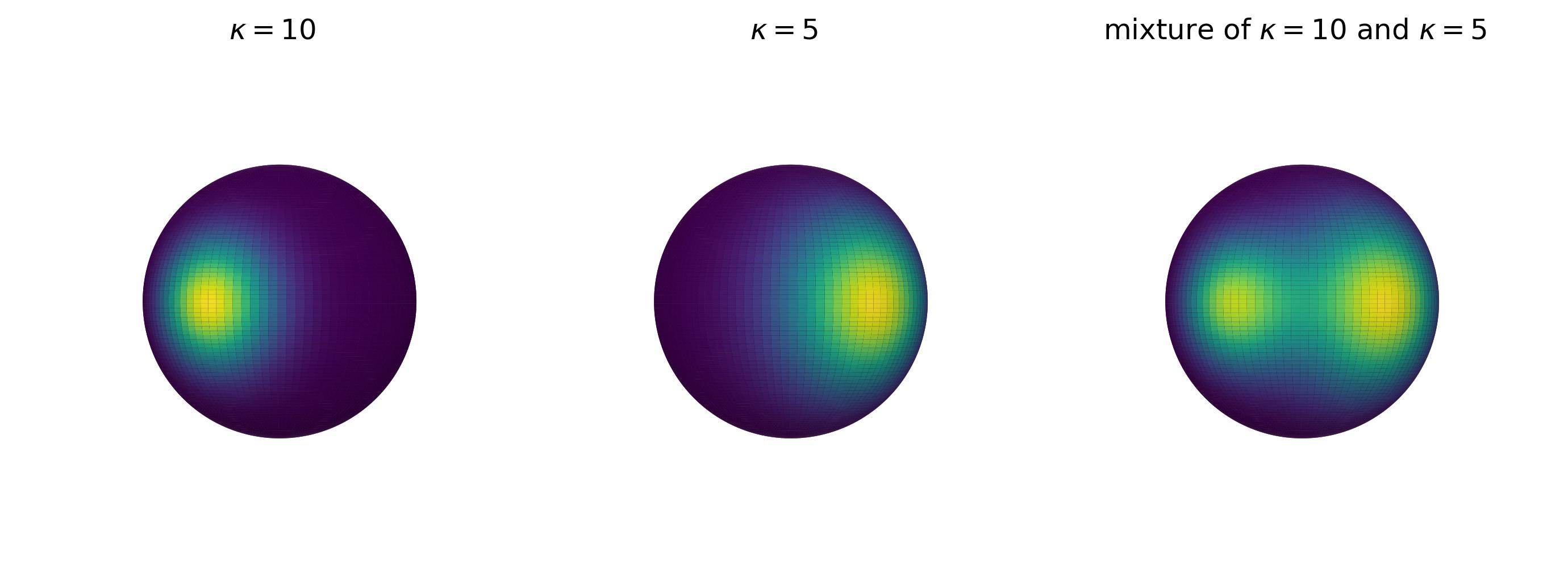}
  \end{center}
  \caption{Individual components and a mixture of von Mises-Fisher distributions on $\mathcal{S}^2$. The left panel shows probability density of von Mises-Fisher distribution with parameters $\mu_1 = \left(-\sqrt{0.9},-\sqrt{0.1},0\right)\in\mathcal{S}^2$ and $\kappa_1 = 10$. The middle panel shows the probability density function of von Mises-Fisher distribution with parameters $\mu_2 = \left(-\sqrt{0.01}, -\sqrt{0.99}, 0\right)\in\mathcal{S}^2$ and $\kappa_2 = 5$. The right panel shows the probability density function of a mixture of these two von Mises-Fisher distributions where the mixing coefficient $w=0.3$. In all these picture bright yellow color represents high density regions and the shifting shades from yellow to green represents gradual low density regions.}
  \label{fig:vonmises}
\end{figure}

\end{document}